\documentclass[11pt]{article}
\usepackage[utf8]{inputenc}
\usepackage{fullpage}

\usepackage{comment}
\usepackage{amssymb}
\usepackage{amsthm}
\usepackage{amsmath}
\usepackage{tikz}

\usepackage{hyperref}

\usepackage{nicematrix}

\newtheorem{thm}{Theorem}[section]
\newtheorem{prop}[thm]{Proposition}

\newtheorem{cor}[thm]{Corollary}
\newtheorem{rmk}[thm]{Remark}
\newtheorem{conj}[thm]{Conjecture}
\newtheorem{ex}[thm]{Example}

\newcommand{\Aut}{\textup{Aut}}

\title{Symmetric  polyhedral scenes and parallel redrawings}
\author{
Signe Lundqvist\thanks{Department of Computer Science, Katholieke Universiteit Leuven, Belgium}
\and
Bernd Schulze\thanks{School of Mathematical Sciences, Lancaster University, UK}
\and
Klara Stokes\thanks{Department of Mathematics and Mathematical Statistics, Ume{\aa} University, Sweden}
}
\date{}

\begin{document}

\maketitle
\begin{abstract}
Liftings and parallel redrawings are classical topics in applied discrete geometry, concerned respectively with vertically lifting a $(d-1)$-picture -- a realisation of a vertex-hyperplane incidence geometry in $\mathbb{R}^{d-1}$ -- to a $d$-dimensional polyhedral scene, and with redrawing a $d$-picture (or hyperplane arrangement) within $\mathbb{R}^d$ while preserving prescribed hyperplane normals. In this paper, we develop a unified framework for the analysis of forced-symmetric liftings and forced-symmetric parallel redrawings. In particular, we show that the classical duality for these theories also appears in the forced-symmetric setting.
We establish the orbit lifting matrix and the orbit concurrence geometry matrix, and show that they are the appropriate symmetry-adapted analogues of the standard lifting matrix and concurrence geometry (or parallel redrawing) matrix. We also make explicit the relationship with the orbit version of Whiteley's parallel design matrix for graphs. %Their kernels characterise precisely the spaces of symmetric liftings and symmetric parallel redrawings. 
Using these tools, we derive necessary conditions for symmetric pictures to be forced-symmetric flat, and for symmetric hyperplane arrangements to be forced-symmetric robust, expressed as sparsity counts on the group-labelled quotient graphs associated with the symmetric incidence geometries. Finally,  we discuss conjectures regarding the sufficiency of these conditions for generic configurations. %We also provide an example, which illustrates that in the generic case (modulo symmetry), these necessary conditions are not sufficient in the lifting setting, and we offer  conjectures on when these conditions might be sufficient.
\end{abstract}

\textbf{Keywords}: Incidence geometries; polyhedral scenes; parallel redrawings; symmetry; orbit matrices; gain graphs

\section{Introduction}

\subsection{Background}
Consider a (combinatorial) incidence geometry consisting of a set of points, a set of hyperplanes, and an incidence relation between them. A realisation of this incidence geometry in $(d-1)$-space, where the points are mapped to locations and the hyperplanes are represented by regions or faces, is called a \emph{$(d-1)$-picture}. One may then ask whether such a $(d-1)$-picture can be lifted vertically to another  realisation of the same incidence geometry in $d$-space, called a  \emph{$d$-scene}, in which the combinatorial hyperplanes become actual affine hyperplanes that remain flat, and  such that not all hyperplanes coincide.

The problem of lifting $2$-pictures to $3$-scenes has long been a central topic in geometric reasoning, computer vision, and artificial intelligence. Reconstructing polyhedral environments or objects from monocular input is a fundamental challenge in scene analysis and robotic perception. Early foundational work by Mackworth~\cite{Mac} and Huffman~\cite{Huff} provided necessary conditions for the realisability of $2$-pictures as $3$-scenes. The geometric method of reciprocal diagrams used by them had, in fact, already appeared in the work of Maxwell and Cremona as a graphical tool for analysing equilibrium stresses in bar-joint frameworks, revealing a strong connection to geometric rigidity theory \cite{Maxwell01041864,baker2025geometry}. See also \cite{WW84cor}.

Further progress was made by Sugihara~\cite{Sug841,Sug842}, who gave both a necessary and sufficient condition for a general picture to be non-liftable to a polyhedron, or \emph{flat},    using linear programming. Subsequently, necessary and sufficient conditions were obtained by Crapo and Whiteley~\cite{Crapo,CW93,RosThom} using invariant-theoretic and projective-geometric methods. Liftability can be characterised by analysing the independence of the rows in a \emph{lifting matrix}, giving necessary conditions for flatness in terms of sparsity counts involving the elements of the incidence geometry. Following a conjecture of Sugihara, Whiteley later showed that these conditions are also sufficient for generic pictures to be flat in all dimensions \cite{WW89}.

The dual problem to liftability is that of \emph{parallel redrawings} of hyperplane arrangements. Here one asks whether for a given  realisation of  a vertex-hyperplane incidence geometry in $\mathbb{R}^{d}$, there exists another  realisation of this incidence geometry in $\mathbb{R}^{d}$  that preserves all prescribed hyperplane normals, and is non-trivial, i.e. not obtained from translations or dilation.  Whiteley gave a complete characterisation for the non-existence of such non-trivial parallel redrawings for generic hyperplane arrangements in all dimensions in \cite{WW89}. We note that for the special case when the incidence geometry is a simple graph, the existence of a non-trivial parallel redrawing of a $2$-dimensional bar-joint framework is equivalent to the existence of a non-trivial infinitesimal motion, a fact long used as a graphical tool in engineering.

Motivated by practical applications, where symmetry in structures is ubiquitous, the role of symmetry has recently become increasingly important in rigidity and geometric realisation theory (see, e.g., \cite{connellyguest,bernd2017sym}). The theory distinguishes between two settings: \emph{incidental symmetry}, where the original symmetry of a picture may be lost in a lifting, parallel redrawing, or (infinitesimal) motion; and \emph{forced symmetry}, where the original symmetry must be preserved.
See Figure~\ref{fig:prismlift} for an example of a generically flat $2$-picture with reflection symmetry that can be lifted to a non-trivial $3$-scene that is also mirror-symmetric.

\begin{figure}[htp]
    \centering
    \includegraphics[width=0.28\linewidth]{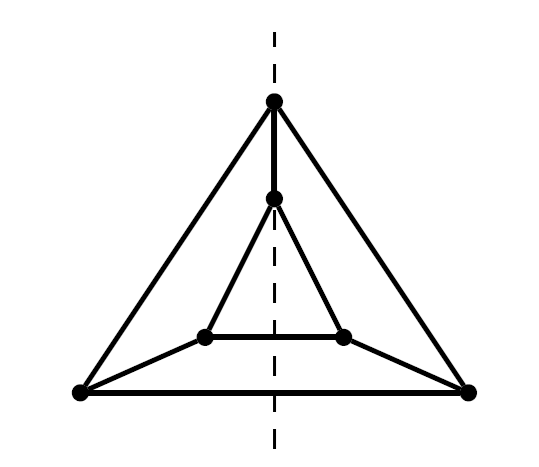}
    \hspace{0.5cm}
    \includegraphics[width=0.32\linewidth]{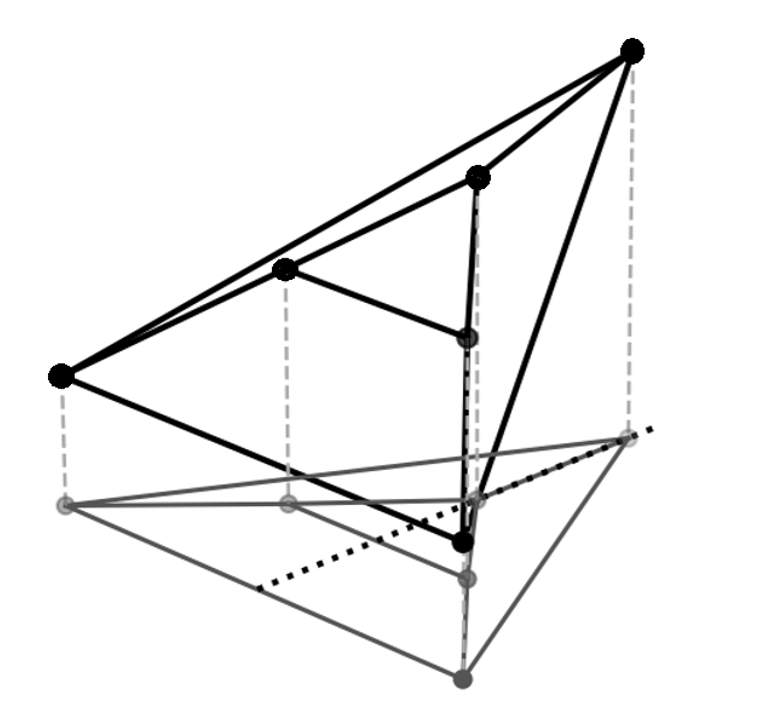}
      \caption{A $2$-picture with vertical reflection symmetry, where the hyperplanes correspond to the faces of the planar graph. Such a reflection-symmetric $2$-picture admits a non-trivial lifting to a $3$-scene that also preserves the reflection symmetry, as shown on the right.}
    \label{fig:prismlift}
\end{figure}

The current state of the art for liftings and parallel redrawings of symmetric pictures and hyperplane arrangements can be summarised as follows.

In \cite{KS17} Kaszanitzky and Schulze derived necessary conditions for symmetric $(d-1)$-pictures to be (minimally) flat using a block-decomposition of the lifting matrix corresponding to irreducible representations of the picture's symmetry group. For $3$-fold rotational symmetry in the plane, these conditions, together with the standard sparsity conditions, were also shown to be sufficient under suitable genericity assumptions \cite{KS18}; these results concern the \emph{incidental symmetry} setting, where the original symmetry of the picture may be lost in a lifting. By contrast, the problem of \emph{forced symmetry}, where the lifting must preserve the original symmetry, remains largely unexplored. Necessary conditions for $2$-dimensional pictures to be \emph{forced-symmetric flat} in this sense were given in \cite{JKS}, but a general theory is still lacking.

In \cite{tanigawamatroids} Tanigawa established complete combinatorial characterisations for  realisations of \emph{simple graphs} in $\mathbb{R}^{d}$ (i.e., $d$-dimensional bar-joint frameworks) to admit no non-trivial symmetric parallel redrawings for arbitrary symmetry groups, under the assumption that the group action is free on the vertices. However, extensions of this result to more general incidence geometries remain open.

\subsection{Our contributions}

In this work, we focus on symmetric pictures where the group action on points is free, which is the standard  setting in the literature. We first introduce the \emph{orbit lifting matrix}, which is the symmetry-adapted version of the standard lifting matrix and serves as the analogue of the well established orbit rigidity matrix in rigidity theory \cite{sw2011} (see Section~\ref{sec:sym_scenes}). We also define the \emph{orbit concurrence geometry matrix} (see Sections~\ref{sec:dualsym} and \ref{sec:pardrgen}). 
We then establish that the classical duality between liftings and parallel redrawings extends to the forced symmetric setting.
In the planar case, we show that the orbit concurrence geometry matrix generalises Tanigawa's orbit matrix for parallel redrawings of graphs. In particular, if the incidence geometry is a graph, the orbit concurrence geometry matrix reduces to Tanigawa's orbit matrix (see Section~\ref{sec:robustness}). Both matrices can be viewed as the blocks corresponding to the trivial representation in the block-decomposed lifting and concurrence geometry matrices, and their kernels describe precisely the spaces of symmetric liftings and symmetric parallel redrawings, respectively (see  Theorems~\ref{Thm:scene_orbit_kernel} and \ref{thm:sym_kernel_O(d)}). 

Using these new types of orbit matrices, we derive necessary conditions for symmetric pictures to be \emph{forced-symmetric flat}, that is, to admit no non-trivial vertical symmetry-preserving lifting to a polyhedral scene, and for symmetric hyperplane arrangements to be \emph{forced-symmetric robust}, that is, to admit no symmetry-preserving non-trivial parallel redrawing (see Corollaries~\ref{cor:sym_flat_nec} and \ref{cor:sym_robust_nec}). These conditions take the form of sparsity counts on the group-labelled quotient graphs of the symmetric bipartite incidence graphs describing the incidence geometries under the action of the symmetry group. 

Finally, we offer some conjectures regarding the sufficiency of the necessary conditions when the realisations are generic (modulo the symmetry) and discuss some further directions for future work.

%discuss sufficient conditions in the case when the realisation is generic (modulo the symmetry), and show that  the necessary conditions are in general not sufficient for a picture to be forced-symmetric flat. In contrast, for parallel redrawings,  the conditions may be generically sufficient. These observations highlight a structural source of row dependencies in the lifting setting and suggest a conjecture on when the necessary sparsity conditions are also sufficient in the symmetry-generic setting. 

\subsection{Structure of paper}

In Section \ref{sec:preliminaries} we introduce pictures, scenes, liftings, hyperplane arrangements and parallel redrawings, which are the main concepts of the article, and we summarise key results from the corresponding non-symmetric theories.  In Section~\ref{sec:sym} we define the orbit lifting matrix and in Section \ref{sec:symmetric_parallel} the orbit concurrence
geometry matrix, and use those matrices to establish necessary conditions for symmetry-forced flatness and robustness, respectively. In Section~\ref{sec:symmetric_parallel} we also estalish the duality between the forced-symmetric lifting and parallel redrawing problems. In Section~\ref{sec:robustness} we  establish the symmetry-adapted analogue of a classical relationship (due to W. Whiteley) between the parallel design matrix of a bar-joint framework and the concurrence geometry matrix of the associated hyperplane arrangement in dimension $2$. In Section~\ref{sec:conjectures} we discuss conjectures on sufficiency of the necessary conditions.

\section{Preliminaries}
\label{sec:preliminaries}

\subsection{Incidence geometries}

An \textit{incidence geometry} (of rank two) is a triple of sets $S=(P,L,I)$, where $I$ is a subset of $P \times L$. We call the elements of $P$ \textit{points} and the elements of $L$ \textit{hyperplanes} (sometimes denoted $H$ in geometric contexts, but we use $L$ for consistency with classical incidence geometry notation). The set $I$ is a set of \textit{incidences} between the points and hyperplanes. Incidence geometries of rank two are hypergraphs, where the hyperedges are the sets of points incident to a common hyperplane. If every hyperplane is incident to exactly two points, then the incidence geometry is a \emph{graph}.

The \emph{incidence graph} of an incidence geometry is a bipartite graph with vertex
set $V=P\cup L$, and an edge $(p,\ell)$ if and only if $(p,\ell)\in I$. An incidence geometry
is said to be \emph{connected} if its incidence graph is connected. Throughout, we will only consider connected incidence geometries.

%\subsection{Projective geometry}
%\label{sec:projective_geometry_prelims}

\subsection{Pictures and scenes}
\label{sec:pic_scenes}

A \textit{$(d-1)$-picture} $(S,x)$ consists  of an incidence geometry $S=(P,L,I)$ and an assignment $x: P \to \mathbb{R}^{d-1}$. A \textit{$d$-scene} lifting a $(d-1)$-picture $(S,x)$ 
is an assignment of a point coordinate $y: P \to \mathbb{R}$ to every $p \in P$, an assignment  $n: L \to \mathbb{R}^{d-1}$ defining the normals $(n(\ell),1)$ of the hyperplanes, as well as an affine shift 
$h: L \to \mathbb{R}$ to each element of $L$ such that for each incidence $(p, \ell)$ we have
\begin{equation}
    n(\ell)x(p)+ y(p)+ h(\ell)=0.
    \label{scene_eq}
\end{equation}

When considering $d$-scenes we will always assume that each hyperplane of the incidence geometry is incident to at least $d$ points. Finding the space of $d$-scenes lifting a $(d-1)$-picture $S(x)$ of an incidence geometry $S$ amounts to solving a system of $|I|$ equations of the form (\ref{scene_eq}), where $n(\ell)$, $h(\ell)$ and $y(p)$ are variables and $x(p)$ is fixed. We denote the coefficient matrix of this system of equations by $M^{d}(S, x)$.

The row of $M^{d}(S,x)$ corresponding to the incidence $(p, \ell)$ is given by
 \setcounter{MaxMatrixCols}{20}
  $$\begin{bNiceMatrix}[first-row,first-col]
         &&&&\ell&&&&&p&&&&\\
		  &0&\ldots&0&x(p)^T \quad 1&&0&\ldots&0&1&0&\ldots&0
        \end{bNiceMatrix}$$

A $d$-scene is \textit{trivial} if $n(\ell)=n(\ell')$ and $h(\ell)=h(\ell')$ for all $\ell, \ell' \in L$, meaning that all hyperplanes representing elements of $L$ lie in the same hyperplane. A picture is \textit{flat} if all scenes lifting the picture are trivial. It is \emph{minimally flat} if it is flat and the removal of any incidence yields a non-flat picture. A $d$-scene is \textit{sharp} if all hyperplanes representing elements of $L$ are distinct. 

We can also consider the points $x(p)$ as points in $\mathbb{RP}^{d-1}$, denoted by homogeneous coordinates $[x_1:\ldots:x_{d}]$. Throughout, we make the assumption that $x_{d}$ is non-zero, which means that the point is not at infinity, and use the representative $(x_1,\ldots,x_{d-1},1)$ for the point in $\mathbb{RP}^{d-1}$.

Similarly, we can consider the hyperplanes as hyperplanes in $\mathbb{RP}^{d}$, and denote a hyperplane $a_1x_1+\cdots+a_{d+1}x_{d+1}=0$ by its homogeneous coordinates $[a_1:\ldots:a_{d+1}]$. Throughout, we will assume that $a_{d} \neq 0$, which means that the hyperplane is not parallel to the $d:$th coordinate axis of $\mathbb{R}^d$.

We will switch freely between the vector representation  and projective viewpoint in this paper, as convenient.

\subsection{Hyperplane arrangements and parallel redrawings}
\label{sec:hyperplanes_parallel}

Given an incidence geometry $S=(P,L,I)$, let $n$ be an assignment $n: L \to \mathbb{R}^{d-1}$. We can then consider the \emph{hyperplane arrangement} $(S,n)$ where $(n(\ell),1)$ is the hyperplane normal assigned to the line $\ell \in L$. Assuming that the last coordinate of each normal is $1$ means that the hyperplanes representing elements of $L$ are not parallel to the $d$-th coordinate axis of $\mathbb{R}^d$.

A \textit{parallel redrawing} of $(S,n)$  is an assignment of an affine shift $h \in \mathbb{R}$ to each element of $L$, together with an assignment of a point $(x,y)$ in $\mathbb{R}^d$, where $x \in \mathbb{R}^{d-1}$ and $y \in \mathbb{R}$, to each element of $P$ such that if $(p, \ell) \in I$, then the point $(x(p),y(p))$ in $\mathbb{R}^d$ representing $p$ lies in the hyperplane $n(\ell)x+y+h(\ell)$. Finding the space of parallel redrawings of a hyperplane arrangement $(S,n)$ amounts to solving an equation of the form 

\begin{equation}
    n(\ell)x(p)+y(p)+h(\ell)=0
    \label{parallel_eq}
\end{equation}
for each incidence $(p, \ell)$, where $x(p)$, $y(p)$ and $h(\ell)$ are variables and $n(\ell)$ is fixed. When considering parallel redrawings in dimension $d$, we will assume that each point is incident to at least $d$ lines. 

We denote the coefficient matrix of this system of equations by $M^{d*}(S, n)$. This matrix is also known as the \emph{concurrence geometry matrix} (in reference to Crapo's term of a concurrence geometry \cite{CrapoCon}).

The row of $M^{d*}(S,n)$ corresponding to the incidence $(p, \ell)$ is given by

 \setcounter{MaxMatrixCols}{20}
  $$\begin{bNiceMatrix}[first-row,first-col]
         &&&&p&&&&\ell&&&&\\
		  &0&\ldots&0&n(\ell) \quad 1&0&\ldots&0&1&0&\ldots&0
        \end{bNiceMatrix}$$

We say that a parallel redrawing is \textit{degenerate} if all points are assigned the same coordinates. All incidence hyperplane arrangements $(S,n)$ have degenerate parallel redrawings. If a hyperplane arrangement has some non-degenerate parallel redrawing, i.e. a parallel redrawing where at least two points are given distinct coordinates, then any dilation is also another parallel redrawing of $(S,n)$. We say that a hyperplane arrangement $(S,n)$ is \textit{robust} if the only parallel redrawings of $(S,n)$ are \emph{trivial}, i.e., translations and dilations. 

For an incidence geometry $S=(P,L,I)$ we can define its \textit{dual} $S^*=(L,P,I)$, where the point set and the hyperplane set are switched. 
In projective duality, points correspond to hyperplanes, and a hyperplane constrained by an assigned normal corresponds to a point constrained by one projection. Thus, liftings of the primal structure correspond naturally to parallel redrawings of the dual structure, in the sense that they provide complementary geometric descriptions that preserve incidence relations (ee e.g.,\cite{DCGHand}). We will consider this duality in the symmetry-forced setting in Section~\ref{sec:dualsym}.

\subsection{The $k$-plane matroid}
\label{sec:k-plane_matroid}

Let $G=(A \cup B,E)$ be a bipartite graph. The \textit{$k$-plane matroid at $B$} is a matroid on E, where a set $E' \subseteq E$ is independent whenever

$$
|E''| \leq |A(I'')| + k|B(I'')| - k
$$
for all non-empty subsets $E'' \subseteq E'$. 

We say that a picture $(S,x)$ in $\mathbb{R}^{d-1}$ is \textit{generic} if the set of coordinates $\{x(p) \mid p \in P\}$ is algebraically independent over $\mathbb{Q}$. The following is a fundamental result on liftings of pictures, due to W. Whiteley.

\begin{thm}[The picture theorem, Whiteley \cite{WW89}]
    Let $S=(P,L,I)$ be an incidence geometry. The rows of $M^{d}(S,x)$ are independent for all generic  $(d-1)$-pictures $(S,x)$ if and only if the incidence graph $G=(P \cup L, I)$ is independent in the $d$-plane matroid at $L$.
    All generic $(d-1)$-pictures $(S,x)$ are minimally flat  if and only if furthermore $|I|=|P|+d|L|-d$, i.e. if $I$ is a basis of the $d$-plane matroid at $L$.
    \label{thm:pic_thm}
\end{thm}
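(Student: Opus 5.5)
Necessity comes first. Take any non-empty set of incidences $I'\subseteq I$, and let $P(I')$ and $L(I')$ be the points and hyperplanes it touches. The rows of $M^d(S,x)$ indexed by $I'$ are supported only on the columns of $P(I')\cup L(I')$, and there are $|P(I')|+d|L(I')|$ such columns. These restricted rows always have a kernel of dimension at least $d$: the trivial scenes with all $n(\ell)=n_0$ and $h(\ell)=h_0$, and $y(p)=-n_0x(p)-h_0$, give a $d$-dimensional family. This uses that $L(I')$ is non-empty and that the map is injective in $(n_0,h_0)$, which holds once some hyperplane meets $d$ affinely independent points, as it does generically. So the rank is at most $|P(I')|+d|L(I')|-d$. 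Independence of all rows therefore forces $|I'|\le |P(I')|+d|L(I')|-d$, which is exactly independence in the $d$-plane matroid at $L$.

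For sufficiency I would induct on $|I|$, building matroid-independent sets by Henneberg-type moves, in the spirit of Whiteley's proof. Let $G$ be independent in the $d$-plane matroid, with every hyperplane of degree at least $d$. A counting argument shows there is either a point of degree $1$ or a hyperplane of degree exactly $d$. \emph{Case 1: a point $p$ of degree $1$.} Removing $p$ and its incidence deletes one row and one column, and that column, $y(p)$, is nonzero only in the deleted row. Independence therefore extends. \emph{Case 2: a hyperplane $\ell$ of degree $d$.} Removing $\ell$ deletes $d$ rows and $d$ columns. The $d\times d$ block restricted to the columns of $\ell$ consists of the rows $(x(p)^T,1)$ for the $d$ incident points. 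For a generic picture these are affinely independent, so the block is nonsingular. The remaining entries of those rows lie in the $y(p)$ columns, and block-triangularity then gives independence. In both cases the smaller geometry is still matroid-independent, since the matroid is closed under deletion. Independence for generic $x$ then follows, because it is a Zariski-open condition that holds at one point.

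The main obstacle is the reduction step. When no point has degree $1$, I must show that a degree-$d$ hyperplane actually exists, or else handle the case where every hyperplane has degree greater than $d$. Independence gives $|I|\le|P|+d|L|-d$. Suppose every point has degree at least $2$ and every hyperplane degree at least $d+1$. Then $|I|\ge 2|P|$ and $|I|\ge(d+1)|L|$. This does not immediately contradict the count, so pure deletion may fail. I would then use an extension move that removes a hyperplane of degree $d+1$ and reinserts an incidence, or a submodularity and tight-set argument on the minimal tight subsets. Failing that, I would prove genericity directly via a Laplace expansion, or a matching argument, on a basis-sized square submatrix: the $d$-plane count, by a Rado/Hall-type theorem, yields a matching assigning each incidence to a column such that each hyperplane keeps $d$ columns free. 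The monomial of that matching in the algebraically independent $x(p)$ cannot cancel, so the determinant is a nonzero polynomial. I expect this transversal/Rado argument to be the cleanest route.

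Minimal flatness then follows. Flatness means the kernel equals the $d$-dimensional trivial space, that is, rank $=|P|+d|L|-d$. Together with row independence this gives $|I|=|P|+d|L|-d$. Conversely, a basis of the matroid has independent rows of exactly that rank, so the picture is flat, and removing any incidence drops the rank, so it is minimally flat.
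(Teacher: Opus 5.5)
Your necessity argument is correct. The injectivity of $(n_0,h_0)\mapsto$ trivial scene needs no affine-independence hypothesis, since the $\ell$-columns record $(n_0,h_0)$ directly. Your deduction of the minimal-flatness statement from the independence statement is also fine.

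The gap is the sufficiency direction, which is the real substance of Whiteley's theorem. (The paper does not prove this theorem; it only quotes it, and Section~\ref{sec:conjectures} notes that Whiteley's proof goes through $(d+1)$-uniform hypergraphs.) Your claim that a counting argument always yields a point of degree $1$ or a hyperplane of degree $d$ is false, as your own third paragraph suspects. For $d=2$, take hyperplanes $\ell_1,\dots,\ell_4$ and six points $p_{ij}$, with $p_{ij}$ incident to $\ell_i$ and $\ell_j$ (a complete quadrilateral). Then $|I|=12=|P|+2|L|-2$. Any $I'$ with $|L(I')|=m$ satisfies $|I'|-|P(I')|\le\binom{m}{2}\le 2m-2$, so $I$ is a basis of the $2$-plane matroid. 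Yet every point has degree $2$ and every hyperplane has degree $3=d+1$. No sequence of your two deletion moves reaches this basis, and the alternatives you list (an unspecified extension move, a tight-set argument) are not carried out. Hence the ``if'' direction of both sentences of the theorem remains unproved.

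The route you expect to be cleanest does not work as stated either. A matching in which each hyperplane keeps $d$ columns free would use only the $|P|$ point columns, which is impossible once $|I|>|P|$. More importantly, the principle that the monomial of a matching cannot cancel needs the nonzero entries to be distinct independent indeterminates. In $M^d(S,x)$ the coordinates of $x(p)$ repeat in every row of $p$, and most nonzero entries equal $1$, so cancellation genuinely occurs.

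For $d=2$, take two hyperplanes $\ell_1,\ell_2$ through the same three points $p,q,r$. The six rows admit a perfect matching onto the columns $y(p),y(q),n(\ell_1),h(\ell_1),n(\ell_2),h(\ell_2)$: for instance $(p,\ell_1)\mapsto n(\ell_1)$, $(q,\ell_1)\mapsto y(q)$, $(r,\ell_1)\mapsto h(\ell_1)$, $(p,\ell_2)\mapsto y(p)$, $(q,\ell_2)\mapsto n(\ell_2)$, $(r,\ell_2)\mapsto h(\ell_2)$. Yet these rows have rank at most $5$ for every $x$, because the trivial scenes give a $2$-dimensional kernel on their $7$ columns. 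So this $6\times 6$ minor vanishes identically.

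Any argument of this kind must therefore show why the particular matching produced from the $d$-plane count has a monomial with nonzero total coefficient. Alternatively, you need some other specialisation or inductive argument that handles configurations like the complete quadrilateral. That is exactly the missing step.
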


The dual situation holds for parallel redrawings. We say that a hyperplane arrangement $(S,n)$ in $\mathbb{R}^d$  is \emph{generic} if the set of hyperplane normals $\{n(\ell) \mid \ell \in L\}$ is algebraically independent over $\mathbb{Q}$.

\begin{thm}[Whiteley \cite{WW89}]
    Let $S=(P,L,I)$ be an incidence geometry. The rows of $M^{d*}(S, n)$ are independent for all generic hyperplane arrangements $(S,n)$ if and only if $I$ is independent in the $d$-plane matroid at $P$.
    \label{independence_parallel}
\end{thm}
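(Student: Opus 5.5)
The natural route is to deduce this from the picture theorem (Theorem~\ref{thm:pic_thm}) by comparing the two coefficient matrices. Fix an incidence $(p,\ell)$. In $M^{d*}(S,n)$ its row has entries $n(\ell)^T$ and $1$ in the $d$ columns of $p$, and $1$ in the single column of $\ell$. Now pass to the dual geometry $S^*=(L,P,I)$, in which the hyperplanes $\ell$ play the role of points and the points $p$ play the role of hyperplanes. Take the dual $(d-1)$-picture $(S^*,n)$, which places the point $\ell$ at $n(\ell)\in\mathbb{R}^{d-1}$. The row of $M^{d}(S^*,n)$ for the incidence $(\ell,p)$ then has entries $n(\ell)^T$ and $1$ in the $d$ columns of the hyperplane $p$, and $1$ in the single column of the point $\ell$.

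After a fixed permutation of columns, these two rows are literally equal, so
\[
M^{d*}(S,n)=M^{d}(S^*,n)
\]
up to column order. Consequently the rows of $M^{d*}(S,n)$ are independent exactly when the rows of $M^{d}(S^*,n)$ are. Moreover, genericity of the arrangement $(S,n)$ (the normals algebraically independent over $\mathbb{Q}$) is by definition the same as genericity of the picture $(S^*,n)$.

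The incidence graph of $S^*$ is the same bipartite graph $(P\cup L,I)$ as that of $S$, with the roles swapped. Applying Theorem~\ref{thm:pic_thm} to $S^*$, the rows of $M^d(S^*,n)$ are independent for all generic $n$ if and only if $I$ is independent in the $d$-plane matroid at the hyperplane class of $S^*$, which is $P$. That is precisely the claimed statement.

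The one point needing care is the side conditions. Theorem~\ref{thm:pic_thm} is stated for scenes in which each hyperplane has at least $d$ points. For parallel redrawings the paper assumes each point lies on at least $d$ hyperplanes, which under dualisation is exactly the same hypothesis. The convention that the last coordinate of each normal equals $1$ corresponds to the affine chart used for the picture points, so the identification of matrices is exact and not merely up to scaling of rows.

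I expect no real obstacle beyond this bookkeeping. If one wanted a self-contained proof instead, the main work would be Whiteley's sufficiency direction: an inductive construction of bases of the $d$-plane matroid together with a generic rank-preservation argument for each extension move. Necessity follows from counting columns minus the $d$-dimensional space of trivial solutions.
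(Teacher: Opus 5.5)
Your argument is correct. The row of $M^{d*}(S,n)$ for $(p,\ell)$ and the row of $M^{d}(S^*,n)$ for $(\ell,p)$ coincide after a fixed column permutation, genericity transfers verbatim, and the picture theorem applied to $S^*$ then gives independence in the $d$-plane matroid at $P$.

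The paper does not prove this statement itself. It cites Whiteley and presents the result as the dual of the picture theorem, relying on the same observation you use: both problems come from one set of incidence equations with complementary variables fixed (cf.\ Section~\ref{sec:dualsym}).
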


Whiteley also addressed the question of which incidence geometries have \emph{proper} realisations, i.e. realisations where all points are distinct, for generic choices of hyperplane normals.
%, that is, for ``generic" choices of normals for the faces. 
The answer is given in Theorem \ref{robust} below.

\begin{thm}[Whiteley \cite{WW89}]
    Hyperplane arrangements $(S,n)$ in $\mathbb{R}^d$ with generic normals are proper if and only if 
    $$
    |I'| \leq |L(I')|+d|P(I')|-(d+1)
    $$
    for all subsets $I' \subseteq I$ such that $|I'| \geq 2$. 
    
    Hyperplane arrangements of $S$ with generic normals are minimally robust if and only if furthermore $|I|=|L|+d|P|-(d+1)$.
    \label{robust}
\end{thm}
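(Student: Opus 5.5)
The plan is to reduce Theorem~\ref{robust} to Theorem~\ref{independence_parallel} by studying the kernel of the concurrence geometry matrix $M^{d*}(S,n)$. Its columns are indexed by the $d|P|$ point coordinates $(x(p),y(p))$ and the $|L|$ shifts $h(\ell)$. The trivial parallel redrawings (translations in $\mathbb{R}^d$ and a dilation about the origin) form a kernel subspace of dimension $d+1$, as long as at least two points are distinct. Hence a generic arrangement is robust exactly when $\operatorname{rank} M^{d*}(S,n)=|L|+d|P|-(d+1)$.

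For minimal robustness, I would first use Theorem~\ref{independence_parallel}. If $I$ satisfies the stated count, then $I$ is independent in the $d$-plane matroid at $P$, since that count is at least as strong. So the rows of $M^{d*}(S,n)$ are independent for generic $n$, and its rank is $|I|=|L|+d|P|-(d+1)$. The kernel then has dimension exactly $d+1$ and consists only of trivial redrawings, which gives robustness. Deleting any incidence drops the rank by one, because the rows are independent. This creates a non-trivial redrawing, which gives minimality.

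The converse and the properness statement use a subset argument. Suppose some $I'$ with $|I'|\ge 2$ has $|I'|>|L(I')|+d|P(I')|-(d+1)$. I would restrict to the subgeometry spanned by $I'$. Its columns carry $|L(I')|+d|P(I')|$ variables, and the row count exceeds the rank available beyond the trivial $(d+1)$-dimensional kernel. So either the rows are dependent, or the kernel on that subgeometry has dimension at most $d$. In the generic setting this forces the points of $P(I')$ to coincide, since the kernel can then contain only translations. That coincidence contradicts properness.

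Conversely, I would show the count implies properness. Suppose two points $p,q$ were forced together in every generic redrawing. Taking the rigid block containing them, the matroid rank function would give a subset violating the $-(d+1)$ count. Here I would use the standard fact that a union of two overlapping robust subgeometries sharing two distinct points is again robust.

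The main obstacle is the converse direction: showing that the count forces generic points to be distinct, not merely that the rows are independent. The difference between $-d$ in Theorem~\ref{independence_parallel} and $-(d+1)$ here must be accounted for by the dilation. I would first try to handle this by appending to $M^{d*}(S,n)$ a row $x(p)-x(q)$ pinning the dilation along some coordinate, and then showing by a generic-rank/submodularity argument that this row is independent of the others whenever the $(d+1)$-count holds.
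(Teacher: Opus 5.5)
The steps you actually carry out are fine: robustness read as $\operatorname{rank} M^{d*}(S,n)=|L|+d|P|-(d+1)$, independence of $I$ from the count via Theorem~\ref{independence_parallel}, and minimality from row independence.

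The real gap is the direction you flag yourself: nothing in the proposal shows that the count makes generic arrangements proper. The ``rigid block'', the union lemma for robust pieces and the appended row $x(p)-x(q)$ are named but never connected to the count, and this is the only part of the theorem that goes beyond Theorem~\ref{independence_parallel}. A step that closes it is to merge the two points:
\begin{itemize}
\item Let $S_{p=q}$ identify $p$ and $q$, keeping the same generic normals. Redrawings of $S$ with $p=q$ are exactly the redrawings of $S_{p=q}$. Since $I$ is independent, $p$ and $q$ are forced together iff $\operatorname{rank} M^{d*}(S_{p=q},n)=|I|-d$; this rank is always at least $|I|-d$.
\item By Theorem~\ref{independence_parallel}, this rank is the $d$-plane matroid rank of the merged incidence multiset. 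By the Edmonds--Rota formula it equals $\min\{|X_0|+\sum_{i\ge 1}f(X_i)\}$ over partitions, where $f(X)=|L(X)|+d|P(X)|-d$.
\item Blocks sharing the merged point can be united, since $f(X_i\cup X_j)\le f(X_i)+f(X_j)$ when $P(X_i)\cap P(X_j)\neq\emptyset$. So at most one block contains it.
\item Every block that does not pull back to incidences at both $p$ and $q$ has $|X_i|-f(X_i)\le 0$, by independence of $I$.
\item The one block $X$ that does pulls back to $Y\subseteq I$ with $|P(Y)|\ge 2$ and $|L(Y)|+d|P(Y)|-d=f(X)+d$. The $(d+1)$-count then gives $|X|-f(X)\le d-1$.
\end{itemize}
Hence the merged rank is at least $|I|-d+1$. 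The locus $p=q$ is therefore a proper subspace of $\ker M^{d*}(S,n)$ for every pair, and any kernel vector outside this finite union of proper subspaces is a proper redrawing.

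There are also smaller problems with the other direction and with the statement.
\begin{itemize}
\item The count must range over $I'$ with $|P(I')|\ge 2$. As printed ($|I'|\ge 2$), a single point on $d$ hyperplanes violates it although it is trivially proper. Correspondingly, your step ``forces the points of $P(I')$ to coincide'' says nothing when $|P(I')|=1$.
\item In that necessity step you skip the case where the rows of $I'$ are dependent; then the kernel need not be small. Take instead a circuit $C\subseteq I'$. Theorem~\ref{independence_parallel} gives $|C|\ge |L(C)|+d|P(C)|-d+1$, so the kernel on $C$ consists only of the $d$ translations and all of $P(C)$ coincides. Moreover $|P(C)|\ge 2$, because the rows at a single point are independent (they have distinct $h$-columns).
\item The converse of the second sentence needs properness as a hypothesis. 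For $d=2$, take a triangle of lines $\ell_1,\ell_2,\ell_3$ with vertices $p,r,s$, plus a second point $q$ on $\ell_1,\ell_2$. It is minimally robust with $|I|=|L|+2|P|-3$, yet the four incidences of $p,q$ with $\ell_1,\ell_2$ violate the count.
\end{itemize}
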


\begin{rmk}
Note that there is a discrepancy between the dimensions of the spaces of trivial liftings and trivial parallel redrawings: trivial liftings form a $d$-dimensional space, whereas trivial parallel redrawings form a $(d+1)$-dimensional space, due to  the presence of a dilation in the latter. 

The dual of a flat scene is a degenerate hyperplane arrangement where all lines go through a single point. The dilation acts trivially on such hyperplane arrangements, and in this case the kernel of $M^{d*}(S,n)$ only contains the translations. Conversely, the dual of a robust hyperplane arrangement will be a picture that has a non-trivial scene corresponding to the dilation.
\end{rmk}

%\signe{Yes, we can add something like that! Except don't we impose a normalisation condition on the hyperplanes in both the scene analysis and parallel redrawings cases (actually the same one, that the hyperplanes are not vertical)?}\bernd{yes, you are right. I rephrased. But maybe it can be improved? Signe: can you add sentence saying that translations correspond to trivial liftings, but that dilation corresponds to a non-trivial lifting...?}

\section{Symmetry-forced liftings and parallel redrawings}\label{sec:sym}

\subsection{Symmetric incidence geometries}
\label{sec:symincgeo}

An \emph{automorphism} of an incidence geometry $S=(P,L,I)$ is a bijection $\alpha: (P,L) \to (P,L)$ such that $(p,\ell) \in I$ if and only if $(\alpha(p), \alpha(\ell)) \in I$. The set of automorphisms of an incidence geometry forms a group  $\Aut(S)$ under composition. 
Note that an automorphism of an incidence geometry naturally induces a graph automorphism of the incidence graph, which maps the sets of vertices of the incidence graph representing elements of $P$  and $L$ to themselves.

%Let $\Gamma$ be a finite group and let $\phi:  \Gamma \to \Aut(S)$ be a homomorphism. Then $\phi$  defines an action of $\Gamma$ on the incidence graph $G$ of $S$. If $\Gamma$ is a subgroup of $\textrm{Aut}(S)$, then $S$ and the corresponding incidence graph $G$ are said to be \emph{$\Gamma$-symmetric}.

If $\Gamma$ is a subgroup of $\mathrm{Aut}(S)$, then we say that $S$ (and its incidence graph $G$) is $\Gamma$-symmetric, with $\Gamma$ acting naturally on $S$ and on $G$.
Throughout this paper, we will only consider automorphisms of $S$ that act freely on the vertices of its incidence graph. In other words, no vertex of the incidence graph is fixed by any non-trivial element of $\Gamma$.

To study symmetric pictures, scenes and parallel redrawings, it is convenient to consider group-labelled quotient graphs of the corresponding 
symmetric incidence graphs under the action of the group $\Gamma$. These graphs are also known as ``gain graphs". We make this precise below and refer the reader to \cite{jkt} for further details. 

Let $G$ be $\Gamma$-symmetric. The action of $\Gamma$ on $G=(V,E)$ defines a \textit{quotient $\Gamma$-gain graph} $(G/\Gamma, \psi)$ with a vertex for each orbit of vertices under the action of $\Gamma$, and an edge for each orbit of edges under the action of $\Gamma$. To define the \emph{gain function} $\psi: E\to \Gamma$,  we first choose a representative from each vertex orbit. 
Let $u$ and $v$ be representatives of two vertex orbits such that there exists an edge $(u, \gamma \cdot v) \in E(G)$ for some $\gamma \in \Gamma$,  where $\gamma \cdot v$ denotes the image of $v$ under the action of $\gamma$. Then the directed edge $e=(u,v)$ in $(G/\Gamma, \psi)$ is assigned the gain $\psi(e) = \gamma$. 

Since the incidence graph of an incidence geometry is bipartite, edges connect vertices representing points $P$ to vertices representing hyperplanes $L$. We adopt the convention of directing edges in the quotient gain graph consistently with this bipartition: 
\begin{itemize}
\item For liftings to scenes, edges go from hyperplane-orbits to point-orbits. 
    \item  For parallel redrawings, edges go from point-orbits to hyperplane-orbits.  
\end{itemize}
 This convention ensures that the gain assigned to an edge records how the corresponding group element acts on the vertex at the head of the directed edge, leading to a consistent symmetry-adapted matrix formulation in each setting.

We denote the vertex set of the quotient $\Gamma$-gain graph $(G/\Gamma,\psi)$ by $P_\Gamma \cup L_\Gamma$, where $P_\Gamma$ is the set of vertices representing orbits of points, and $L_\Gamma$ is the set of vertices representing the orbits of hyperplanes. We denote its edge set  by $I_\Gamma$.

Conversely, we may start with a finite group $\Gamma$ and a multigraph $G'=(V',E')$ (with no loops), where $V'=P'\cup L'$ and each edge is oriented consistently with the chosen convention (that is, from hyperplane-vertices to point-vertices in the lifting setting, and from point-vertices to hyperplane-vertices in the parallel redrawing setting), and assigned a group label (or gain) via a function $\psi:E'\to\Gamma$,  such that parallel edges are assiged distinct  gains.		
This gives a \emph{$\Gamma$-gain graph}  $(G',\psi)$. An edge $e$ from vertex $v_i$ to vertex $v_j$ with gain $\psi(e)$ is denoted by $e=(v_i,v_j;\psi(e))$.	
        
For a $\Gamma$-gain graph $(G',\psi)$, the \emph{derived (or covering) graph} $G$ is the $\Gamma$-symmetric simple graph that is represented by $(G',\psi)$. More precisely, if $G'=(V',E')$, then $G=(V,E)$, where
\[
V = \{(v,\gamma) \mid v \in V',\ \gamma \in \Gamma\},
\]
and an edge $\{\gamma_i v_i, \gamma_j v_j\}$ is in $E$ if and only if 
\[
(v_i, v_j; \gamma_i^{-1} \gamma_j) \in E'.
\]

%More precisely, if $G'=(V',E')$, then $G=(V, E)$, where  $V=\{(v,\gamma):v\in V',\gamma\in\Gamma\}$ and $\{(v_i,\gamma),(v_j,\gamma\psi(e))\}$ is an edge of $E$ whenever $e=(v_i,v_j)\in E'$. 

Let $W$ be a walk in a gain graph $G$. The \textit{gain of $W$} is defined as $\psi(W)=\psi(e_1)\psi(e_2)\ldots\psi(e_n)$, where $e_1,\ldots,e_n$ are the edges traversed in order along $W$, and the product is taken using the group operation in $\Gamma$. If some edge in the walk is traversed against the direction of the edge, then $\psi(e_i)$ should be replaced by $\psi(e_i)^{-1}$ in the product.

For a subset $X \subseteq E$, let $\pi_1(X,v)$ denote the set of all closed walks in $G$ starting at a vertex $v$ and using only the edges of $X$. If $v \notin V(X)$, we define $\pi_1(X,v)= \emptyset$. The \textit{subgroup generated by $X$ relative to $v$} is defined as $\langle X \rangle_{\psi, v}= \langle \psi(W) \mid W \in \pi_1(X, v) \rangle$. The groups $\langle X \rangle_{\psi, v}$ and $\langle X \rangle_{\psi, u}$ are conjugate if $X$ is connected, and $u$ and $v$ are two vertices in $V(X)$ \cite{jkt}. A subset $X \subseteq E$ is \textit{balanced} if $\langle X \rangle_{\psi,v}$ is trivial for every $v \in V(X)$. Otherwise $X$ is called \textit{unbalanced}.

One may choose different representatives for the vertex orbits in a $\Gamma$-gain graph, which corresponds to what is called a \emph{switching operation} (see \cite{jkt} for details). Such operations do not change the derived graph and preserves the group up to conjugation, and it is routine to show that they also preserve the rank of  the associated orbit matrices, including the orbit lifting matrix and the orbit concurrence geometry matrix.
%\signe{As it is, we don't actually use switching operations anywhere, but we might want to note that the rank of the orbit matrix is independent of switching operations just so our definitions don't seem ambiguous}
%A \emph{switching operation} in a (quotient) $\Gamma$-gain graph $(G/ \Gamma, \psi)$ amounts to choosing a different representative of the corresponding vertex orbit. Specifically, if $(G/\Gamma, \psi)$ is a (quotient) $\Gamma$-gain graph, then the gain function obtained from $\psi$ by a switching operation at $v$ with $\tau(\gamma)$ is the gain function that is obtained by choosing the representative $\tau(\gamma) v$ for the orbit of vertices containing $v$.
%Specifically, a \textit{switching operation at $v$ with $\tau(\gamma)$} is a change to the gain function $\psi$ by
%\[  \psi'=
%  \begin{cases}
%     \tau(\gamma) \psi(e) \tau(\gamma)^{-1} & \quad \text{if } e \text{ is a loop at } v,\\
%      \tau(\gamma) \psi(e) &\quad \text{if } e \text{ is directed from $v$ and not a loop}\\
%      \psi(e) \tau(\gamma)^{-1} & \quad \text{if $e$ is directed to $v$ and not a loop}\\
%      \psi(e) & \quad \text{if $e$ is not incident to $v$}.
%  \end{cases}
%\]
Two gain functions on the same underlying graph are said to be \textit{equivalent} if one can be obtained from the other by a sequence of switching operations.

%\signe{Do we need these propositions? We don't cite them anywhere.}
%\begin{prop} \cite{jkt}
%Let $(G, \psi)$ be a gain graph, and let $\psi'$ be a gain function equivalent to $\psi$. Then $\langle X \rangle_{\psi, v}$ and $\langle X \rangle_{\psi', v}$ are conjugate for any set $X \subseteq E$ and $v \in V(X)$. 
%\label{eqv_gains}
%\end{prop}

%\begin{prop} \cite{jkt}
%    Let $(G, \psi)$ be a gain graph. Then, for any forest $F \subseteq E$, there is a gain function $\psi'$ which is equivalent to $\psi$ such that $\psi'(e)=id$ for all $e \in F$. 
%\label{forest_gains}
%\end{prop}

\subsection{Symmetric scenes and the orbit lifting matrix} \label{sec:sym_scenes}

In this section, we define the orbit lifting matrix, whose kernel describes the space of symmetric $d$-scenes lifting a symmetric $(d-1)$-picture, and we derive necessary conditions for independence in its row matroid. To introduce symmetric liftings and the orbit lifting matrix, we first specify the class of symmetry groups under consideration. 

A symmetric $d$-scene is a scene in $\mathbb{R}^d$ that is invariant under the action of a finite subgroup of $O(d)$, meaning that the group acts linearly by rotations or reflections and preserves the scene. Before presenting the general formal definition, we give a few illustrative examples to highlight subtleties that arise in symmetric liftings. In this work, we focus on symmetric scenes that arise as liftings of symmetric pictures in $\mathbb{R}^{d-1}$, so that the lifted scene preserves the original symmetry of the picture.

\begin{figure}[h]
    \centering
    \begin{tikzpicture}
        \filldraw[black] (-2,-1) circle (2pt);
        \filldraw[black] (2,-1) circle (2pt);
        \filldraw[black] (-1,1) circle (2pt);
        \filldraw[black] (1,1) circle (2pt);
        \node[fill=white, draw=white, inner sep=1pt, anchor=center] at (-2,-1.3) {$(x_1,y_1)$};
         \node[fill=white, draw=white, inner sep=1pt, anchor=center] at (2,-1.3) {$(x_2,y_2)$};
          \node[fill=white, draw=white, inner sep=1pt, anchor=center] at (-1.8,1) {$(x_3,y_3)$};
           \node[fill=white, draw=white, inner sep=1pt, anchor=center] at (1.8,1) {$(x_4,y_4)$};

        \filldraw[black] (-2,-1.7) circle (2pt);
        \filldraw[black] (2,-1.7) circle (2pt);
        \filldraw[black] (-1,-1.7) circle (2pt);
        \filldraw[black] (1,-1.7) circle (2pt);
\node[fill=white, draw=white, inner sep=1pt, anchor=center] at (-2,-2) {$x_1$};
         \node[fill=white, draw=white, inner sep=1pt, anchor=center] at (2,-2) {$x_2$};
          \node[fill=white, draw=white, inner sep=1pt, anchor=center] at (-1,-2) {$x_3$};
           \node[fill=white, draw=white, inner sep=1pt, anchor=center] at (1,-2) {$x_4$};

        \draw[thick] (-3.5,-1.7)--(3.5,-1.7);
        \draw[thick] (-1,1)--(-2,-1);
        \draw[thick] (-1,1)--(2,-1);
        \draw[thick] (1,1)--(2,-1);
        \draw[thick] (1,1)--(-2,-1);
        \node[fill=white, draw=white, inner sep=1pt, anchor=center] at (0,-3.5) {(a)};
    \end{tikzpicture}
    \hspace{0.8cm}
     \begin{tikzpicture}
        %\filldraw[black] (0,3) circle (2pt);
        \filldraw[black] (-2,-2.7) circle (2pt);
        \filldraw[black] (2,-0.7) circle (2pt);
        \filldraw[black] (-1,-0.7) circle (2pt);
        \filldraw[black] (1,-2.7) circle (2pt);
\node[fill=white, draw=white, inner sep=1pt, anchor=center] at (-2,-3) {$(x_1,y_1)$};
         \node[fill=white, draw=white, inner sep=1pt, anchor=center] at (1,-3) {$(x_4,y_4)$};
          \node[fill=white, draw=white, inner sep=1pt, anchor=center] at (-1.8,-0.7) {$(x_3,y_3)$};
           \node[fill=white, draw=white, inner sep=1pt, anchor=center] at (2.7,-0.7) {$(x_2,y_2)$};

        \filldraw[black] (-2,-1.7) circle (2pt);
        \filldraw[black] (2,-1.7) circle (2pt);
        \filldraw[black] (-1,-1.7) circle (2pt);
        \filldraw[black] (1,-1.7) circle (2pt);
\node[fill=white, draw=white, inner sep=1pt, anchor=center] at (-2,-2) {$x_1$};
         \node[fill=white, draw=white, inner sep=1pt, anchor=center] at (2,-2) {$x_2$};
          \node[fill=white, draw=white, inner sep=1pt, anchor=center] at (-1,-2) {$x_3$};
           \node[fill=white, draw=white, inner sep=1pt, anchor=center] at (1,-2) {$x_4$};
           
        \draw[thick] (-3.5,-1.7)--(3.5,-1.7);
        \draw[thick] (-1,-0.7)--(-2,-2.7);
        \draw[thick] (1,-2.7)--(2,-0.7);
        \draw[thick] (-1,-0.7)--(2,-0.7);
        \draw[thick] (1,-2.7)--(-2,-2.7);
        \node[fill=white, draw=white, inner sep=1pt, anchor=center] at (0,-3.5) {(b)};
    \end{tikzpicture}
    \caption{(a) A reflectionally symmetric $1$-picture (with combinatorial hyperplanes given by the four pairs of points $13, 14, 23, 24$) that lifts to a reflectionally symmetric $2$-scene. (b) The same $1$-picture also lifts to a half-turn-symmetric $2$-scene.}
    \label{fig:ref_sym_scene}
\end{figure}

\begin{ex}
\label{ex:C2-scenes}
Consider the $1$-picture shown in Figure \ref{fig:ref_sym_scene}. 
This $1$-picture is invariant under the action on $\mathbb{R}$ given by multiplication by $-1$, 
that is, it is symmetric with respect to reflection about the origin. 
 As illustrated in Figure \ref{fig:ref_sym_scene} (a), this $1$-picture admits a lifting to a reflectionally symmetric $2$-scene, i.e a scene in $\mathbb{R}^2$ that is invariant under the action on $\mathbb{R}^2$ given by multiplication by the matrix  $\operatorname{diag}(-1,1)$. 
 %$\begin{pmatrix}
  %  -1 & 0\\
 %   0 & 1\\
%\end{pmatrix}$. 

Note that the same $1$-picture also lifts to a \emph{half-turn-symmetric} $2$-scene, i.e. a $2$-scene that is invariant under the action on $\mathbb{R}^2$ given  by multiplication by the matrix $\operatorname{diag}(-1,-1)$
%$\begin{pmatrix}
 %   -1 & 0\\
 %   0 & -1\\
%\end{pmatrix}$
(see Figure \ref{fig:ref_sym_scene}(b)). 
\end{ex}

\begin{rmk}
Suppose that in Figure \ref{fig:ref_sym_scene}  a point is added to the $1$-picture which is fixed under the $\mathbb{Z}_2$-action (hence necessarily the origin). 
In the reflectionally symmetric lifting (Figure~\ref{fig:ref_sym_scene} (a)), this fixed point is realised as a finite point in the $2$-scene: the corresponding diagonals intersect on the axis of reflection.
In contrast, in the half-turn-symmetric lifting (Figure~\ref{fig:ref_sym_scene} (b)), the corresponding lines form opposite sides of a parallelogram and are therefore parallel. Consequently, the $\mathbb{Z}_2$-fixed point in the $1$-picture cannot be realised as a finite point in the affine $2$-scene and appears only projectively as a point at infinity.  %This phenomenon is specific to points that are fixed under the $\mathbb{Z}_2$-action. (how about two coincident points at origin? maybe no need to say more here.
This phenomenon highlights that, in the presence of fixed points, the choice of character can affect whether such points admit an affine realisation in the lifted scene or only a projective one. While extending the orbit matrix formalism to include fixed points is straightforward, we do not pursue this direction further in this paper.
\end{rmk}

\begin{figure}[htp]
    \centering
    \includegraphics[width=0.25\linewidth]{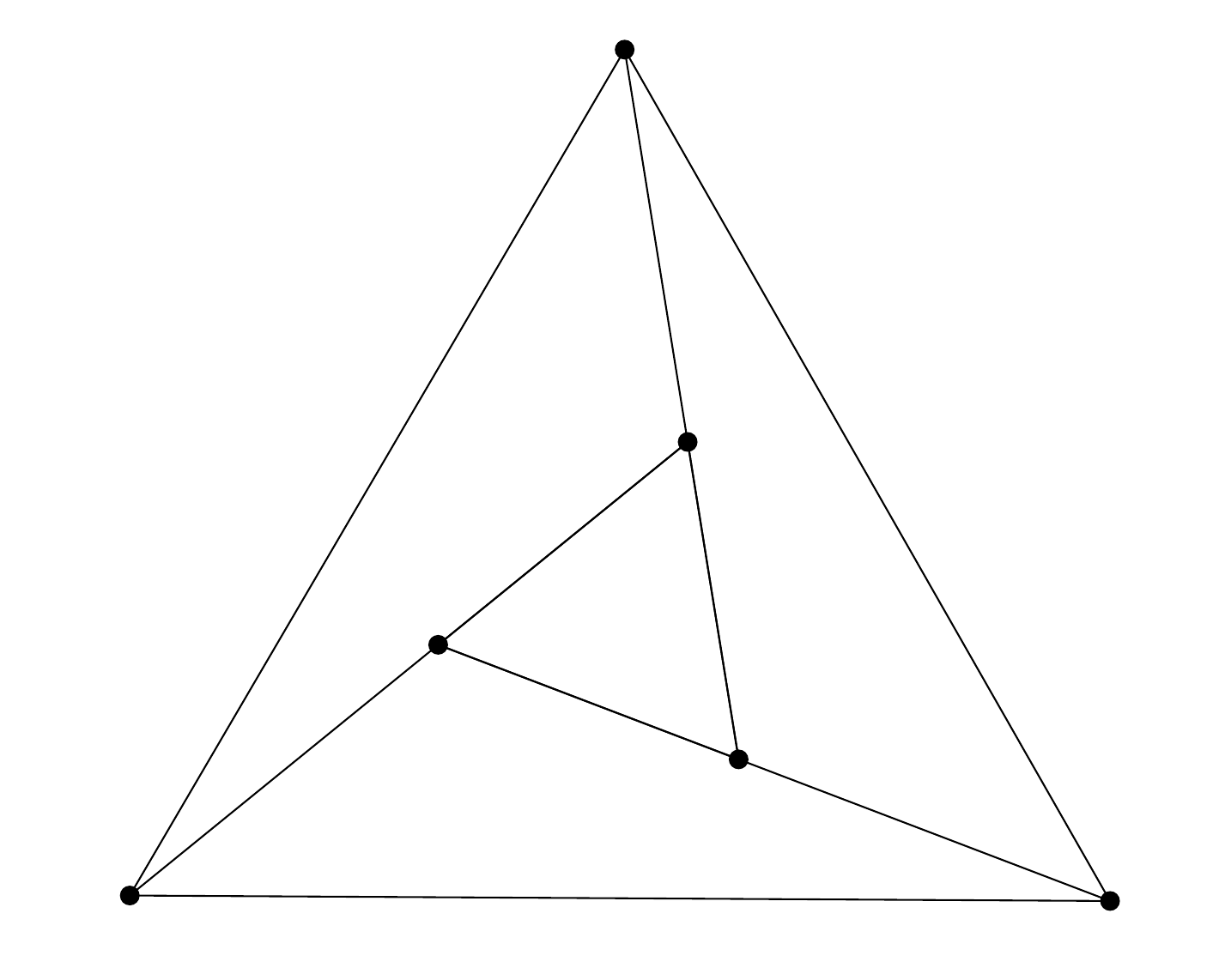}
   \caption{A $3$-fold rotationally symmetric $2$-picture corresponding to the triangular prism. As in Figure~\ref{fig:prismlift}, each region in the figure represents a hyperplane in the combinatorial incidence geometry, and the vertices correspond to points incident to these hyperplanes.}
       \label{fig:C3-sym}
\end{figure}

\begin{ex}
\label{ex:C3-sym_scene}
    Consider the incidence geometry given by a triangular prism and its  realisation as a $2$-picture shown in Figure \ref{fig:C3-sym}. While it has a $3$-fold rotational symmetry in $\mathbb{R}^2$ (denoted $\mathcal{C}_3$ in Schoenflies notation, where $\mathcal{C}_k$ refers to a rotation group of order $k$, and $\mathcal{C}_s$ corresponds to reflection symmetry), its vertical projection to $\mathbb{R}^1$ is not $\mathcal{C}_3$-symmetric, so it is not a lifting of a $\mathcal{C}_3$-symmetric $1$-picture. However, we could consider liftings of the $2$-picture in Figure \ref{fig:C3-sym} to $3$-fold rotationally symmetric scenes in $\mathbb{R}^3$. This $2$-picture is in fact (minimally) flat, and hence admits no non-trivial liftings to $3$-scenes, let alone non-trivial liftings that preserve the $3$-fold rotational symmetry.
\end{ex}

Let $S=(P,L,I)$ be an incidence geometry and let $\Gamma$ be a subgroup of $\Aut(S)$ such that there exists a representation $\tau: \Gamma \to O(d-1)$. A $(d-1)$-picture $(S, x)$ in $\mathbb{R}^{d-1}$ is \emph{$\tau(\Gamma)$-symmetric} if $$\tau(\gamma) x(p) = x(\gamma p)  \qquad \textrm{ for all }  p \in P \textrm{ and } \gamma \in \Gamma.$$
For brevity, we often simply say \emph{$\Gamma$-symmetric} when the representation $\tau$ is clear from the context.

To define symmetric scenes in $\mathbb{R}^d$, we also fix a $1$-dimensional character of $\Gamma$ with values in $\{\pm 1\}$, i.e. a group homomorphism \(\chi:\Gamma\rightarrow\{\pm1\}\). Define the homomorphism $\tau_{\chi}: \Gamma \to O(d)$ by

\[\tau_{\chi}(\gamma) =\begin{pmatrix}
    \tau(\gamma) & 0\\
    0 & \chi(\gamma) \\
\end{pmatrix}.\]

We denote the image of $\tau_{\chi}$ by $\Gamma_{\chi}$; that is, $\Gamma_\chi = \tau_\chi(\Gamma)$, 
so that the action of $\Gamma_\chi$ on $\mathbb{R}^d$ is given by the representation $\tau_\chi$. We will consider scenes that are symmetric with respect to symmetry groups $\Gamma_{\chi}$, where $\Gamma$ has a representation $\tau: \Gamma \to O(d-1)$.

Moreover, define the homomorphism $\hat{\tau}_{\chi}$ by
\[
\hat{\tau}_{\chi}(\gamma) = 
\begin{pmatrix} \tau_\chi(\gamma) & 0 \\ 0 & 1 \end{pmatrix}.\]

\begin{ex}\label{ex:z2ex}
    Let $d=2$, and  let $\Gamma \simeq\mathbb{Z}_2=\{e, \gamma\}$. Let $\tau: \mathbb{Z}_2 \to O(1)$ be defined by $\tau(e)=1$ and $\tau(\gamma)=-1$. 

    First, let $\chi_1$ be the trivial homomorphism, so that $\chi_1(\gamma)=\chi_1(e)=1$. The group $\Gamma_{\chi_1}$ is generated by 
    \[
\tau_{\chi_1}(\gamma) = 
\begin{pmatrix} -1 & 0 \\ 0 & 1 \end{pmatrix}.\]

\noindent
The group $\Gamma_{\chi_1}$ acts on $\mathbb{R}^2$ by a reflection in the $y$-axis.

Next, let $\chi_2$ be given by  $\chi_2(e)=1$ and $\chi_2(\gamma)=-1$. Then $\Gamma_{\chi_2}$ is generated by 
\[
\tau_{\chi_2}(\gamma) = 
\begin{pmatrix} -1 & 0 \\ 0 & -1 \end{pmatrix}.\]

\noindent
The group $\Gamma_{\chi_2}$ acts on $\mathbb{R}^2$ by a half-turn centred at the origin.

 Note that reflection in the $x$-axis cannot arise in this example, unless all points of the $1$-picture are coincident, 
since the first coordinate of each point must be $0$, as $\tau(\gamma)=-1$. 

If instead we assume that $\tau$ is trivial, then choosing the non-trivial character $\chi_2$ would yield reflection symmetry in the $x$-axis for the lifted $2$-scene. In this case, pairs of points of the $1$-picture lying in the same $\mathbb{Z}_2$-orbit  
must coincide in $\mathbb{R}$. 
%\signe{Do we consider that picture to be non-generic wrt $\Gamma$?}\bernd{since $\Gamma$ is trivial, coincident points are not generic w.r.t $\Gamma$. And I think $\Gamma$ is the group we refer to when checking genericity, right? The odd thing here is that if we are non-generic, then the lifting with the larger group does not exist...}\signe{Yeah, it's not completely clear what genericity should mean, I think - combinatorially, the group is still $\mathcal{C}_s$, and wrt that group the points that coincide lie in the same orbits, so it could still be considered $\Gamma$-generic. On the other hand, the representation we're interested in is the trivial representation, and wrt that group action, pictures with coincident points should indeed not be considered generic.}
%\bernd{Gamma-generic def:  coordinates of set of rep vertcies is alg. independent over the rationals. so for x-axis reflection the symmetry just forces the images of vertices to lie on top of rep vertices --  still generic} \signe{All I did here was remove the comment that this is non-generic - do we need to say more?}\bernd{no this is fine I think}
\end{ex}

A $d$-scene over a $\Gamma$-symmetric picture $(S,x)$ is $\Gamma_{\chi}$-symmetric if $$(x(\gamma p), y(\gamma p))^T=\tau_{\chi}(\gamma)(x(p),y(p))^T \qquad \textrm{ for all } p \in P \textrm{  and }\gamma \in \Gamma.$$

If a hyperplane $\ell \in L$ is incident to $k$ points $\{p_1,\ldots,p_k\}$, then in any $d$-dimensional $\Gamma_{\chi}$-symmetric scene, with $d \leq k$, lifting a $\Gamma$-symmetric picture $(S,x)$, the $k$ points $(x(p_i), y(p_i))$ span a unique hyperplane with normal $n(\ell)$ and affine shift $h(\ell)$ in $\mathbb{R}^d$. The hyperplane $\gamma \ell$ is incident to the points $\{\gamma p_1, \ldots ,\gamma p_k\}$, which also span a unique hyperplane. Note that 
\[
\langle\hat{\tau}_{\chi}(\gamma)(x(p_i),y(p_i),1)^T, \hat{\tau}_{\chi}(\gamma) (n(\ell), 1, h(\ell))^T\rangle=\langle(x(p_i),y(p_i),1),(n(\ell),1,h(\ell))\rangle=0
\]
as $\hat{\tau}_{\chi}(\gamma) \in O(d+1)$, so the unique hyperplane spanned by $\{\gamma p_1,\ldots,\gamma p_d\}$ must be the hyperplane determined by $\hat{\tau}_{\chi}(n(\ell),1,h(\ell))^T$. 
%\signe{Here's my suggestion for notation that should make the proof of Theorem \ref{thm:nec_scenes} look a bit nicer.} \bernd{good}
It follows that if that hyperplane corresponding to $\ell \in L$ in a $\Gamma_{\chi}$-symmetric $d$-scene has homogeneous coordinates $[n(\ell):1:h(\ell)]$, then the hyperplane realising $\gamma \ell$ has homogeneous coordinates $[\tau(\gamma)n(\ell): \chi(\gamma): h(\ell)]=[\chi(\gamma) \tau(\gamma) n(\ell) : 1 : \chi(\gamma) h(\ell)]$. As our convention is to set the penultimate homogeneous coordinate of the hyperplanes to $1$, we denote 

\begin{equation}
    (n(\gamma \ell), 1, h(\gamma \ell))^T
=
\chi(\gamma)\hat{\tau}_{\chi}(\gamma)(n(\ell),1,h(\ell))^T. 
\label{eq:lines}
\end{equation}
 for any $\ell \in L$ and $\gamma \in \Gamma$. Note also that  $\hat{\tau}_{\chi}(\gamma)$  acts trivially on the last coordinate of $\mathbb{R}^{d+1}$. Hence 
 $\hat{\tau}_{\chi}(\gamma)(n(\ell),1,h(\ell))^T=(\tau(\gamma) n(\ell), \chi(\gamma), h(\ell))^T$. 

We are now ready to define the orbit lifting matrix of a $\Gamma$-symmetric picture. Let $(S,x)$ be a $\Gamma$-symmetric picture, and fix a character $\chi: \Gamma \to \{\pm 1\}$. The \textit{orbit lifting matrix} of a $\Gamma$-symmetric $(d-1)$-picture (with respect to $\chi$) is the coefficient matrix of the system with one equation of the form 
\begin{equation}
    \langle\hat{\tau_{\chi}}(\gamma)(x(p),y(p),1)^T,(n(\ell),1,h(\ell))\rangle=0.
    \label{eq:sym_scene_2}
\end{equation}
for each orbit $(\ell,\gamma p)$  of incidences. Denote the orbit lifting matrix by $M_{\Gamma_{\chi}}^d(S,x)$.
The row of $M_{\Gamma_{\chi}}^d(S,x)$ corresponding to the incidence $(\ell,\gamma p)$ has the following form:

$$\begin{bNiceMatrix}[first-row,first-col]
         &&&&\ell&&&&&p&&&&\\
		  &0&\ldots&0&(\tau(\gamma)x(p))^T\quad 1&&0&\ldots&0&\chi(\gamma)&0&\ldots&0
        \end{bNiceMatrix}.$$

The kernel of $M_{\Gamma_{\chi}}^d(S,x)$ is in one-to-one correspondence with the $\Gamma_{\chi}$-symmetric scenes lifting $(S,x)$ in the following way:

Take $m \in \mathbb{R}^{d|L|/|\Gamma| + |P|/|\Gamma|}$ and suppose that $m$ consists of one coordinate $m(p)=y(p)$ for each orbit of points, and $d$ coordinates $m(\ell)=(n(\ell),h(\ell))$ for each orbit of hyperplanes. Define $\tilde{m} \in \mathbb{R}^{d|L|+|P|}$ by $\tilde{m}(\gamma p)=\chi(\gamma)y(p)$ for all $p \in P$ and $\gamma \in \Gamma$, and define $\tilde{m}(\gamma\ell)=(\tau(\gamma) n(\ell),h(\ell))$ for all $\ell \in L$ and $\gamma \in \Gamma$.

\begin{thm}
    Let $(S,x)$ be a $(d-1)$-picture,  $\Gamma$ be a subgroup of $\Aut(S)$ and  $\tau: \Gamma \to O(d-1)$ be a representation. Then $m \in \mathbb{R}^{d|L|/|\Gamma|+|P|/|\Gamma|}$ is an element of the kernel of $M^d_{\Gamma_{\chi}}(S,x)$ if and only if $\tilde{m}$ determines a symmetric scene lifting $(S,x)$.

    \label{Thm:scene_orbit_kernel}
\end{thm}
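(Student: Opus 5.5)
The plan is to compare, incidence by incidence, the equations of the full lifting matrix $M^d(S,x)$ evaluated at $\tilde m$ with the rows of $M^d_{\Gamma_\chi}(S,x)$ evaluated at $m$. Everything rests on the transformation rule (\ref{eq:lines}) for hyperplanes and the symmetry rule $y(\gamma p)=\chi(\gamma)y(p)$ for points. I will read the extension $\tilde m$ as the one forced by (\ref{eq:lines}), namely
\[
\tilde m(\gamma p)=\chi(\gamma)y(p),\qquad \tilde m(\gamma\ell)=\bigl(\chi(\gamma)\tau(\gamma)n(\ell),\,\chi(\gamma)h(\ell)\bigr),
\]
so that $\tilde m$ is exactly the data of a $\Gamma_\chi$-symmetric assignment. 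The statement as written omits the factors $\chi(\gamma)$ in the hyperplane coordinates. I will flag this, since with $\chi$ nontrivial that omission would not match (\ref{eq:lines}).

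First I check that $\tilde m$ is well defined. Because $\Gamma$ acts freely on the vertices of the incidence graph, each point (respectively hyperplane) can be written uniquely as $\gamma p$ (respectively $\gamma\ell$) with $p$ (respectively $\ell$) an orbit representative. This gives a bijection between vectors $m$ and $\Gamma_\chi$-symmetric assignments $(y,n,h)$. For the symmetric assignment, well-definedness of $y$ and of the hyperplanes uses that $\tau_\chi$ is a homomorphism and $\chi^2=1$.

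Next I organise the incidences. Since edges of the quotient are directed from hyperplane-orbits to point-orbits, every incidence of $S$ has the form $(\delta\ell,\delta\gamma p)$ for a unique $\delta\in\Gamma$ and a unique quotient edge $(\ell,p;\gamma)$. Freeness again gives uniqueness. The key computation is to evaluate the scene equation (\ref{scene_eq}) for this incidence at $\tilde m$:
\[
\chi(\delta)\bigl(\tau(\delta)n(\ell)\bigr)\cdot\tau(\delta)\tau(\gamma)x(p)+\chi(\delta\gamma)y(p)+\chi(\delta)h(\ell).
\]
Using orthogonality of $\tau(\delta)$, this becomes
\[
\chi(\delta)\bigl[n(\ell)\cdot\tau(\gamma)x(p)+\chi(\gamma)y(p)+h(\ell)\bigr].
\]
The bracket is precisely the row of $M^d_{\Gamma_\chi}(S,x)$ indexed by $(\ell,\gamma p)$ applied to $m$, that is, equation (\ref{eq:sym_scene_2}). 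Hence all full equations in one incidence orbit vanish if and only if the single orbit equation vanishes.

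Both directions then follow. If $Mm=0$, every scene equation holds for $\tilde m$, and $\tilde m$ is symmetric by construction, so it is a $\Gamma_\chi$-symmetric scene lifting $(S,x)$. Conversely, if $\tilde m$ is a symmetric scene, take $\delta=e$ in the identity above; it shows that the orbit equations hold at the restriction $m$. The main obstacle is bookkeeping rather than mathematics: getting the direction convention for gains and the placement of $\chi(\gamma)$ right, so that the factor pulled out is exactly $\chi(\delta)$, and reconciling the hyperplane extension with (\ref{eq:lines}).
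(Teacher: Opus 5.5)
Your proof is correct and follows the same route as the paper. Both arguments reduce every scene equation in the incidence orbit $(\delta\ell,\delta\gamma p)$ to the single orbit equation \eqref{eq:sym_scene_2}, using the symmetry of the points, the hyperplane rule \eqref{eq:lines} and orthogonality; your identity with the explicit factor $\chi(\delta)$ is the coordinate form of the paper's step of dropping the scalar $\chi(\beta)$ and cancelling $\hat{\tau}_\chi(\beta)$. Your corrected extension $\tilde m(\gamma\ell)=(\chi(\gamma)\tau(\gamma)n(\ell),\chi(\gamma)h(\ell))$ is right, and it is what the paper's own proof actually uses when it invokes \eqref{eq:lines}; the displayed definition of $\tilde m$ without $\chi(\gamma)$ agrees with this only when $\chi$ is trivial.
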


\begin{proof}

Let $(S,x)$ be a $\Gamma$-symmetric $(d-1)$-picture, and consider a 
$\Gamma_\chi$-symmetric $d$-scene lifting $(S,x)$.
The incidence equation in $M_{\Gamma_\chi}^d(S,x)$ corresponding to an incidence $(\ell, \gamma p)$ is
\begin{equation}
\langle (x(\gamma p), y(\gamma p), 1), (n(\ell), 1, h(\ell)) \rangle = 0.
\label{eq:sym_scenes} % (4)
\end{equation}
By the $\Gamma_\chi$-symmetry of the scene, the coordinates of the lifted points satisfy
\[
x(\gamma p) = \tau(\gamma) x(p), \qquad y(\gamma p) = \chi(\gamma) y(p),
\]
so that equation~\eqref{eq:sym_scenes} can be rewritten in terms of the orbit representative $p$ as
\begin{equation}
\langle \hat{\tau}_\chi(\gamma) (x(p), y(p), 1)^T, (n(\ell), 1, h(\ell)) \rangle = 0,
\label{eq:sym_scene_22} % (3)
\end{equation}
which is the same as equation~\eqref{eq:sym_scene_2}.

The equation corresponding to another incidence $(\beta \ell, \beta \gamma p)$ in the same orbit is
\begin{equation}
    \langle(x(\beta \gamma p), y(\beta \gamma p),1),(n(\beta \ell),1,h(\beta \ell)) \rangle=0
    \label{eq:sym_scene_orbit1}
\end{equation}
By the $\Gamma_\chi$-symmetry of the scene, the lifted coordinates satisfy
\[
x(\beta \gamma p) = \tau(\beta \gamma) x(p), \qquad y(\beta \gamma p) = \chi(\beta \gamma) y(p),
\]
and by \eqref{eq:lines}
\[(n(\beta \ell),1,h(\beta \ell))^T=\chi(\beta) \tau_{\chi}(\beta)(n(\ell),1,h(\ell))^T\]
so equation~\eqref{eq:sym_scene_orbit1} can be rewritten in terms of the orbit representative $p$ as
\begin{equation}
\langle \hat{\tau}_\chi(\beta \gamma) (x(p), y(p), 1)^T, \chi(\beta)\hat{\tau}_{\chi}(\beta)(n(\ell), 1, h(\beta \ell))^T \rangle = 0
\label{eq:sym_scene_orbit_2}
\end{equation}
which is equivalent to 
\begin{equation}
    \langle\hat{\tau}_{\chi}(\beta) \hat{\tau}_{\chi}(\gamma)(x(p),y(p),1)^T, \hat{\tau}_{\chi}(\beta)(n(\ell),1,h(\ell))^T\rangle=0
    \label{eq:sym_scene_orbit_2}
\end{equation} 
as $\chi(\beta)$ is a scalar multiple. Since $\hat{\tau}_{\chi}(\beta)$ is an element of $O(d+1)$, equation~\eqref{eq:sym_scene_orbit_2} is equivalent to equation~\eqref{eq:sym_scene_2}. Theorem \ref{Thm:scene_orbit_kernel} now follows from the equivalence of equations \eqref{eq:sym_scene_2} and \eqref{eq:sym_scene_orbit_2}.
\end{proof}

Recall that the kernel of the lifting matrix $M^d(S,x)$ always contains the trivial scenes lifting $(S,x)$, where all elements of $L$ map to the same hyperplane. Analogously, the kernel of $M^d_{\Gamma_{\chi}}(S,x)$ contains the $\Gamma_{\chi}$-symmetric trivial scenes. The $\Gamma_{\chi}$-symmetric trivial scenes are exactly the scenes where all elements of $L$ map to the same hyperplane, except that hyperplane now has to be symmetric under the action of $\Gamma_{\chi}$. Let $H_{\Gamma_{\chi}}$ denote the space of hyperplanes that are invariant under the action of $\Gamma_{\chi}$, i.e. the hyperplanes with normal $n$ and affine shift $h$ such that \[
\langle \hat{\tau}_{\chi}(\gamma) (n,1,h)^T, (x,y,1) \rangle = 0 \quad \text{if and only if} \quad \langle (n,1,h), (x,y,1) \rangle = 0,
\]
or equivalently, in coordinates,
\[
\tau(\gamma) n \cdot x + \chi(\gamma) y + h = 0 \quad \text{if and only if} \quad n \cdot x + y + h = 0.
\]

\begin{prop}
    Let $S=(P,L,I)$ be an incidence geometry and let $\Gamma$ be a subgroup of $\Aut(S)$ with a representation $\tau: \Gamma \to O(d-1)$. Let $\chi: \Gamma \to \{\pm 1\}$ be a one-dimensional character of $\Gamma$.
    
    Then the dimension of $H_{\Gamma_{\chi}}$ is 
    \[
\begin{cases}
 \frac{1}{|\Gamma|}\sum_{\gamma\in\Gamma} 
\mathrm{tr}\!\bigl(\tau(\gamma)\bigr) +1 & \text{if } \chi \text{ is trivial}\\[4pt]
\frac{1}{|\Gamma|}\sum_{\gamma\in\Gamma} 
\chi(\gamma)\,\mathrm{tr}\!\bigl(\tau(\gamma)\bigr) & \text{if } \chi \text{ is non-trivial}.
\end{cases}
\]
\label{prop:dim_trivial_scenes}
\end{prop}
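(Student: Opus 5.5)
The plan is to turn the invariance condition defining $H_{\Gamma_{\chi}}$ into a system of linear equations on $(n,h)\in\mathbb{R}^{d-1}\times\mathbb{R}$. I would then compute the dimension of its solution space with the standard projection formula for the fixed space of a representation of a finite group.

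\emph{Step 1: rewrite invariance as a linear condition.} Fix $\gamma\in\Gamma$. By the computation preceding \eqref{eq:lines}, the image of the hyperplane $[n:1:h]$ under $\hat{\tau}_{\chi}(\gamma)$ has homogeneous coordinates
\[
[\tau(\gamma)n:\chi(\gamma):h]=[\chi(\gamma)\tau(\gamma)n:1:\chi(\gamma)h].
\]
Two hyperplanes whose penultimate homogeneous coordinate has been normalised to $1$ coincide exactly when their remaining coordinates agree. Hence $(n,h)$ lies in $H_{\Gamma_{\chi}}$ if and only if, for every $\gamma\in\Gamma$,
\[
\chi(\gamma)\tau(\gamma)n=n \qquad\text{and}\qquad \chi(\gamma)h=h .
\]
Both conditions are linear, so $H_{\Gamma_{\chi}}$ is a linear subspace. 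Moreover, the conditions on $n$ and on $h$ are decoupled. Therefore
\[
H_{\Gamma_{\chi}}=V_n\oplus V_h,
\]
where $V_n$ is the common fixed space of the matrices $\chi(\gamma)\tau(\gamma)$ for $\gamma\in\Gamma$, and $V_h=\{h\in\mathbb{R}:\chi(\gamma)h=h \ \text{for all}\ \gamma\in\Gamma\}$.

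\emph{Step 2: the $h$-part.} If $\chi$ is trivial, every $h$ satisfies the condition, so $\dim V_h=1$. If $\chi$ is non-trivial, choose $\gamma$ with $\chi(\gamma)=-1$; then $h=-h$ forces $h=0$, so $\dim V_h=0$.

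\emph{Step 3: the $n$-part.} Since $\chi$ is a homomorphism into $\{\pm1\}$, the map $\rho=\chi\otimes\tau:\gamma\mapsto\chi(\gamma)\tau(\gamma)$ is again a representation of $\Gamma$ on $\mathbb{R}^{d-1}$, and $V_n$ is its space of $\Gamma$-fixed vectors. I would use the averaging operator
\[
\pi=\frac{1}{|\Gamma|}\sum_{\gamma\in\Gamma}\rho(\gamma).
\]
Multiplying by any $\rho(\beta)$ permutes the terms of the sum, so $\rho(\beta)\pi=\pi$. Consequently $\pi^2=\pi$, the image of $\pi$ is contained in $V_n$, and $\pi$ restricts to the identity on $V_n$. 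Thus $\pi$ is a projection onto $V_n$, and
\[
\dim V_n=\mathrm{tr}(\pi)=\frac{1}{|\Gamma|}\sum_{\gamma\in\Gamma}\chi(\gamma)\,\mathrm{tr}\bigl(\tau(\gamma)\bigr).
\]
When $\chi$ is trivial this reduces to $\frac{1}{|\Gamma|}\sum_{\gamma}\mathrm{tr}(\tau(\gamma))$.

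Adding the contributions from Steps 2 and 3 gives exactly the two cases in the statement. The only point needing care is the projective normalisation in Step 1. One must justify that a hyperplane equals its image precisely when the normalised coordinate vectors are equal, rather than merely proportional. This holds because both vectors have penultimate entry $1$ after rescaling by $\chi(\gamma)=\pm1$, so any proportionality constant must be $1$. The remaining steps are routine.
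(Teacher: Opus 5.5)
Your proof is correct and follows essentially the same route as the paper: you decouple invariance into $\tau(\gamma)n=\chi(\gamma)n$ and $\chi(\gamma)h=h$, count the normals via the multiplicity of $\chi$ in $\tau$, and settle the shift according to whether $\chi$ is trivial. The only differences are that you derive the multiplicity formula explicitly from the averaging projection for $\chi\otimes\tau$, where the paper simply cites standard character theory. You also justify the projective normalisation directly, whereas the paper argues the shift condition via the image of the point $(0,h)$.
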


\begin{proof}
    Let \(\tau:\Gamma\rightarrow O(d-1)
\) be the given representation of $\Gamma$, and let \(\chi:\Gamma\rightarrow\{\pm1\}\)
be the $1$-dimensional character of $\Gamma$.
We combine them into the block representation
\[
\tau_{\chi}:\Gamma\longrightarrow O(d),\qquad
\tau_{\chi}(\gamma)
=
\begin{pmatrix}
\tau(\gamma) & 0\\[4pt]
0 & \chi(\gamma)
\end{pmatrix}\in O(d).
\]
We write \(\mathbb R^d=\mathbb{R}^{d-1}\oplus\mathbb R\) and represent a general (non-vertical)  hyperplane by
\[
H=\{(x,y)\in \mathbb{R}^{d-1}\oplus\mathbb R:\; n\cdot x + y = h\},
\]
with normal vector \(v=(n,1)\) and affine shift \(h\).

We observe first that if a hyperplane \(H\) is invariant under \(\tau_{\chi}(\Gamma)\), then its normal \(v\) must satisfy
\(\tau_{\chi}(\gamma)v=\pm v\) for all \(\gamma\in\Gamma\).
From the block structure of \(\tau_{\chi}\) it follows that
\[
\tau(\gamma)\,n=\chi(\gamma)\,n,\qquad\forall \gamma\in\Gamma,
\]
Thus, in the first step, we determine the dimension of the space of such normals, which amounts to finding the multiplicity $\mu_\chi$ of the character \(\chi\) in the representation \(\tau\).
By standard character theory, $\mu_\chi$ is given by
\[
\mu_{\chi} \;=\; \frac{1}{|\Gamma|}\sum_{\gamma\in\Gamma} 
\chi(\gamma)\,\mathrm{tr}\!\bigl(\tau(\gamma)\bigr).
\]
Each copy of \(\chi\) corresponds to one independent normal direction of an invariant hyperplane.

Next we examine the possible affine shifts \(h\) of these hyperplanes.
If \(\chi\) is non-trivial, then there exists some \(\gamma\in\Gamma\) with \(\chi(\gamma)=-1\).
Evaluating the defining equation \(n\cdot x +  y = h\) of \(H\) at the point \((0,h)\)
and applying \(\tau_{\chi}(\gamma)\) shows that this point is sent to \((0,\chi(\gamma)h) = (0,-h)\).
Setwise invariance of \(H\) therefore forces \(h=0\).

If \(\chi\) is trivial, then \(\chi(\gamma)=1\) for all \(\gamma\), and \(h\) remains unconstrained.
Consequently, in this case we obtain one additional parameter corresponding to affine shifts.

In summary, the dimension of the space of unshifted invariant hyperplanes is
\[
\dim = 
\begin{cases}
\mu_{\chi}+1 & \text{if } \chi \text{ is trivial},\\[4pt]
\mu_{\chi} & \text{if } \chi \text{ is non-trivial}.
\end{cases}
\] The proposition follows.
\end{proof}

Note that if $\Gamma_{\chi}'$ is a subgroup of $\Gamma_{\chi}$, we can compute the dimension of the space $H_{\Gamma_{\chi}'}$ of hyperplanes that are invariant under the action of $\Gamma_{\chi}'$ in the same way.

\begin{ex}\label{ex:trivial}
Let $\Gamma = \mathbb{Z}_2 = \{e, \gamma\}$ and consider the non-trivial representation $\tau: \mathbb{Z}_2 \to O(1)$ with $\tau(e)=1$ and $\tau(\gamma)=-1$. (Recall also Example~\ref{ex:z2ex}.) We compute the dimension of invariant hyperplanes for the block representation $\tau_\chi$, where $\chi$ is trivial and non-trivial, respectively. 

\textbf{Case 1:} $\chi$ is  trivial, i.e.\ $\chi(e) = \chi(\gamma) = 1$. The multiplicity of $\chi$ in $\tau$ is
\[
\mu_\chi = \frac{1}{2} \bigl( \chi(e) \, \mathrm{tr}\,\tau(e) + \chi(\gamma) \, \mathrm{tr}\,\tau(\gamma) \bigr) = \frac{1}{2} (1 \cdot 1 + 1 \cdot (-1)) = 0.
\]  
Since $\chi$ is trivial, the affine shift $h$ is unconstrained, giving a total dimension of \( \mu_\chi + 1 = 1\). So the hyperplanes that are invariant with respect to $\tau_{\chi}$ (representing reflection symmetry in the $y$-axis in this case) are the hyperplanes that are parallel to the $x$-axis.

\textbf{Case 2:} $\chi$ is non-trivial, i.e.\ $\chi(e) = 1$, $\chi(\gamma) = -1$. The multiplicity is
\[
\mu_\chi = \frac{1}{2} \bigl( \chi(e) \, \mathrm{tr}\,\tau(e) + \chi(\gamma) \, \mathrm{tr}\,\tau(\gamma) \bigr) = \frac{1}{2} (1 \cdot 1 + (-1) \cdot (-1)) = 1.
\]  
Since $\chi$ is non-trivial, the affine shift $h$ must vanish, so the dimension is
\( \mu_\chi = 1\). The hyperplanes that are invariant with respect to $\tau_{\chi}$ (where $\tau_{\chi}$ acts by a half-turn symmetry in this case) are the hyperplanes that go through the origin. Note that their normals are reversed by the half-turn.
\end{ex}

We say that a $\Gamma$-symmetric $(d-1)$-picture $(S,x)$ is \textit{$\Gamma_{\chi}$-symmetrically flat} if all $\Gamma_{\chi}$-symmetric scenes lifting $(S,x)$ are trivial. The following is an immediate corollary of Theorem \ref{Thm:scene_orbit_kernel} and the discussion above. 

\begin{cor}
        Let $S=(P,L,I)$ be an incidence geometry, and let $(S,x)$ be a $\Gamma$-symmetric $(d-1)$-picture. Then $(S,x)$ is $\Gamma_{\chi}$-symmetrically flat if and only if \[\textup{rank}(M^{d}_{\Gamma_{\chi}}(S, x))=|P|/|\Gamma|+ d|L|/|\Gamma| - \dim H_{\Gamma_\chi}.\]
    \label{cor:flat}
\end{cor}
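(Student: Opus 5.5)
The plan is to use rank–nullity for $M^{d}_{\Gamma_{\chi}}(S,x)$ together with Theorem~\ref{Thm:scene_orbit_kernel}. The matrix has $|P|/|\Gamma|+d|L|/|\Gamma|$ columns: one per point orbit and $d$ per hyperplane orbit, recording $(n(\ell),h(\ell))$. This count uses that the action is free, so each orbit has exactly $|\Gamma|$ elements. Hence the rank equals $|P|/|\Gamma|+d|L|/|\Gamma|-\dim\ker M^{d}_{\Gamma_{\chi}}(S,x)$. The statement therefore reduces to showing that $(S,x)$ is $\Gamma_{\chi}$-symmetrically flat if and only if $\dim\ker M^{d}_{\Gamma_{\chi}}(S,x)=\dim H_{\Gamma_\chi}$.

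First I will construct a linear injection $\iota\colon H_{\Gamma_\chi}\to\ker M^{d}_{\Gamma_{\chi}}(S,x)$ whose image is exactly the set of kernel vectors $m$ for which $\tilde m$ is a trivial scene. Given an invariant hyperplane with parameters $(n,h)$, set $m(\ell)=(n,h)$ for every hyperplane-orbit representative $\ell$. Set $m(p)=y(p)$, where $y(p)$ is determined by the incidence $n\cdot x(p)+y(p)+h=0$, i.e.\ $(x(p),y(p))$ is the vertical lift of $x(p)$ onto the hyperplane.

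Invariance of the hyperplane under $\tau_\chi(\gamma)$ means that the lifted points $(x(\gamma p),\chi(\gamma)y(p))$ also lie on it. Moreover $\tau(\gamma)n$ is $\chi(\gamma)n$ and $h$ is fixed in the relevant case; I will check these relations against the characterisation in the proof of Proposition~\ref{prop:dim_trivial_scenes}. It then follows that $\tilde m$ assigns the same hyperplane to every element of $L$. So $\tilde m$ is a trivial $\Gamma_\chi$-symmetric scene, and by Theorem~\ref{Thm:scene_orbit_kernel}, $m$ lies in the kernel.

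Conversely, suppose $m$ is in the kernel and $\tilde m$ is trivial. Then all hyperplanes $\gamma\ell$ coincide, so this common hyperplane is $\Gamma_\chi$-invariant and $m=\iota(n,h)$. Injectivity is immediate because $(n,h)$ is read off from $m(\ell)$.

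With this in hand, flatness means that every kernel vector yields a trivial scene. That says $\ker M^{d}_{\Gamma_{\chi}}(S,x)=\iota(H_{\Gamma_\chi})$, which is equivalent to $\dim\ker=\dim H_{\Gamma_\chi}$ since $\iota(H_{\Gamma_\chi})$ is always a subspace of the kernel. Substituting into rank–nullity gives the stated formula.

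The main obstacle is the bookkeeping of signs between the convention (\ref{eq:lines}) for hyperplane images and the definition of $\tilde m$. Specifically, I must confirm that for an invariant hyperplane the data $(\tau(\gamma)n,h)$ assigned to $\gamma\ell$ describes the same hyperplane as $(n,h)$ after renormalising the penultimate coordinate to $1$. This is where the factor $\chi(\gamma)$ must cancel, using $\tau(\gamma)n=\chi(\gamma)n$ and $h=0$ when $\chi$ is nontrivial. I will settle this first by direct comparison with the invariance computation in Proposition~\ref{prop:dim_trivial_scenes}.
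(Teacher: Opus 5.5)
Your proposal is correct and is essentially the argument the paper intends; the paper only calls the statement an immediate consequence of Theorem~\ref{Thm:scene_orbit_kernel} and the preceding discussion of trivial symmetric scenes, and you spell it out as rank--nullity on the $|P|/|\Gamma|+d|L|/|\Gamma|$ columns plus the identification of the trivial $\Gamma_\chi$-symmetric scenes with a copy of $H_{\Gamma_\chi}$ inside the kernel. Resolving the sign bookkeeping via \eqref{eq:lines}, rather than via the literal formula $\tilde m(\gamma\ell)=(\tau(\gamma)n(\ell),h(\ell))$, is the right reading; the step $m=\iota(n,h)$ in your converse also tacitly uses that every point lies on some hyperplane, which follows from the paper's standing connectivity assumption.
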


We end this section with a necessary condition for row independence in the orbit lifting matrix.

\begin{thm}
    Let $S=(P,L,I)$ be an incidence geometry, and let $(S,x)$ be a $\Gamma$-symmetric $(d-1)$-picture. If the rows of $M^d_{\Gamma_{\chi}}(S, x)$ are independent, then
\[
|J| \leq |P(J)| + d|L(J)| 
- \sum_{X \in c(J)} \dim H_{\langle X \rangle_{\psi,p}}
\]
 for all subsets $J \subseteq I_{\Gamma}$, where 
$I_{\Gamma}= I/\Gamma$ denotes the set of $\Gamma$-orbits of incidences, $P(J)$ and $L(J)$ denotes the support of $J$ in the orbit incidence graph, and $c(J)$ denotes the set of connected components of the subgraph of the orbit incidence graph induced by $J$.
    \label{thm:nec_scenes}
\end{thm}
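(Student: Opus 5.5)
The plan is the standard ``row count versus kernel dimension'' argument applied to the submatrix of $M^d_{\Gamma_\chi}(S,x)$ cut out by $J$.

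\textbf{Restriction to $J$.} Fix $J \subseteq I_\Gamma$ and let $M_J$ be the submatrix of $M^d_{\Gamma_\chi}(S,x)$ with the rows indexed by $J$. We keep only the columns belonging to the vertex orbits in $P(J)\cup L(J)$; all other columns of these rows are zero. If the rows of $M^d_{\Gamma_\chi}(S,x)$ are independent, then so are the rows of $M_J$. Hence
\[
|J| = \operatorname{rank} M_J = |P(J)| + d|L(J)| - \dim\ker M_J,
\]
and it suffices to show
\[
\dim\ker M_J \ge \sum_{X\in c(J)} \dim H_{\langle X\rangle_{\psi,p}}.
\]

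\textbf{Block decomposition.} After permuting rows and columns, $M_J$ is block diagonal with one block $M_X$ for each connected component $X\in c(J)$, since distinct components share no vertex orbits. So it is enough to show, for each component, that $\dim\ker M_X \ge \dim H_{\langle X\rangle_{\psi,p}}$. Here $p$ is a point orbit in $V(X)$, and $\langle X\rangle_{\psi,p}$ acts through $\tau_\chi$. By the conjugacy remark from \cite{jkt}, the choice of base vertex does not affect the dimension, because conjugate subgroups have isomorphic invariant-hyperplane spaces. If $X$ has no point vertex, it is a single hyperplane vertex with no edges, which cannot occur for nonempty $J$.

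\textbf{Kernel vectors for a component.} This is the main step: we build an injective linear map from $H_{\langle X\rangle_{\psi,p}}$ into $\ker M_X$, which are ``trivial scenes over the component.'' By switching (which preserves rank) we choose a spanning tree $T$ of $X$ and assume all gains on $T$ are the identity. Then $\langle X\rangle_{\psi,p}$ is generated by the gains of the non-tree edges, each read as the gain of its fundamental cycle. Given a hyperplane $(n,1,h)$ invariant under $\tau_\chi(\langle X\rangle_{\psi,p})$, we assign coordinates as follows:
\begin{itemize}
\item every hyperplane orbit $\ell\in L(X)$ gets $m(\ell)=(n,h)$;
\item every point orbit $q\in P(X)$ gets $y(q)$, defined by $n\cdot x(q)+y(q)+h=0$, so that $(x(q),y(q))$ lies on the hyperplane.
\end{itemize}
Tree edges carry identity gain, so their rows are satisfied by construction. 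A non-tree edge $(\ell,\gamma q)$ with $\gamma\in\langle X\rangle_{\psi,p}$ gives the row
\[
\langle\hat\tau_\chi(\gamma)(x(q),y(q),1)^T,(n,1,h)\rangle = 0 .
\]
Since $\hat\tau_\chi(\gamma)$ is orthogonal, this is equivalent to $\langle (x(q),y(q),1),\hat\tau_\chi(\gamma)^{-1}(n,1,h)^T\rangle=0$. By invariance, $\hat\tau_\chi(\gamma)^{-1}(n,1,h)^T=\pm(n,1,h)^T$, so the row reduces to the tree condition and holds.

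\textbf{Expected obstacle.} The delicate point is that the gain of a non-tree edge may not itself lie in $\langle X\rangle_{\psi,p}$ unless the gains are normalized. The switching normalization on $T$ is what guarantees it, and the cycle-gain bookkeeping for edge direction (hyperplane to point) has to be checked carefully, including the $\chi(\gamma)$ scalar from \eqref{eq:lines}.

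\textbf{Injectivity and conclusion.} The map $(n,h)\mapsto m$ is injective: $L(X)$ is nonempty, and the coordinates of any hyperplane orbit in $X$ recover $(n,h)$. This gives $\dim\ker M_X\ge\dim H_{\langle X\rangle_{\psi,p}}$. Summing over all $X\in c(J)$ and substituting into the rank identity yields the stated inequality.
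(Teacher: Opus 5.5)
Your proposal is correct and is essentially the paper's argument. On each component, every $\langle X\rangle_{\psi,p}$-invariant hyperplane gives a ``trivial scene'' lying in the kernel (via orthogonality of $\hat\tau_\chi(\gamma)$), and rank--nullity summed over the components yields the count. Your spanning-tree switching step, together with the $\pm$ sign for non-trivial $\chi$, makes explicit something the paper's proof glosses over: the paper applies invariance under $\hat\tau_\chi(\gamma)$ to every edge gain $\gamma$ of $J$ without first ensuring that $\gamma\in\langle J\rangle_{\psi,p}$.
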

\begin{proof}     
    First, note that if the incidence geometry induced by $J$ is not connected, then 
    \[
|J| = \sum_{X \in c(J)} |X|, \qquad
d|L(J)| = \sum_{X \in c(J)} d|L(X)|, \qquad
|P(J)| = \sum_{X \in c(J)} |P(X)|.
\]    
Hence it suffices to prove the inequality for $J$ inducing a connected incidence geometry.

 Let  $J \subseteq I_\Gamma$, and let $R_J$ be the set of rows of
$M^d_{\Gamma_\chi}(S,x)$ corresponding to $J$. By assumption, $R_J$ is an independent set of rows, so the rank of the submatrix of $M^d_{\Gamma_{\chi}}(S,x)$ given by $R_J$ is $|J|$. We will see that $\dim R_J^\perp \geq \dim H_{\langle J \rangle_{\psi, p}}$, which implies 
\[|J|
\le |P(J)| + d|L(J)| - \dim H_{\langle J \rangle_{\psi,p}}.
\]
  Suppose that the hyperplane with normal $n$ and affine shift $h$ belongs to $H_{\langle J \rangle_{\psi, p}}$.
  Assign $n(\ell)=n$ and $h(\ell)=h$ for all $\ell \in L(J)$.
For each $p \in P(J)$, choose $y(p) \in \mathbb{R}$ such that
\[
\langle (x(p),y(p),1), (n,1,h) \rangle = 0.
\]
Let $(\ell,\gamma p) \in J$. Then
\begin{equation}
\langle \hat{\tau}_\chi(\gamma)(x(p),y(p),1)^T, (n,1,h) \rangle
=
\langle (x(p),y(p),1), \hat{\tau}_\chi(\gamma)^{-1}(n,1,h)^T \rangle.
\label{eq:inv_hyp_vanishes}
\end{equation}
Since $(n,1,h)$ defines a $\langle J \rangle_{\psi,p}$-invariant
hyperplane, we have
\[
\hat{\tau}_\chi(\gamma)^{-1}(n,1,h)^T = (n,1,h)^T,
\]
and therefore the inner product \eqref{eq:inv_hyp_vanishes} vanishes.
Thus every element of $H_{\langle J \rangle_{\psi,p}}$
gives rise to a vector in $R_J^\perp$,
so
\[
\dim R_J^\perp \ge \dim H_{\langle J \rangle_{\psi,p}},
\]
and the theorem follows.
 %   \[
%    <\hat{\tau_{\chi}}(\gamma)(x(p),y(p),1), (n,1,h)>=((x(p),y(p),1), \hat{\tau_{\chi}}^{-1}(\gamma)(n,1,h)>=0
 %   \]
 %   where the final equality holds as $(n,1,h)$ defines a $\langle J \rangle_{\psi, p}$-symmetric hyperplane. Hence $\dim R_J^\perp \geq \dim H_{\langle J \rangle_{\psi, p}}$, so the theorem follows.
\end{proof}

We say that a $\Gamma$-symmetric picture $(S,x)$  is $\Gamma$-generic if the multiset of all coordinates of $\{x(p_1), \dots, x(p_j) \}$ is algebraically independent over $\mathbb{Q}$, where $p_1, \dots, p_j$ form a set of representatives of orbits of points under the action of $\Gamma$. The following is an easy corollary of Theorem \ref{thm:nec_scenes}.

\begin{cor}
    Let $S$ be an incidence geometry, let $\Gamma \leq \mathrm{Aut}(S)$ be a subgroup,
    and let $\chi: \Gamma \to \{\pm 1\}$ be a one-dimensional character. If all $\Gamma$-generic $d-1$-pictures $(S,x)$ are minimally $\Gamma_\chi$-symmetrically flat, then the following hold:
    \begin{enumerate}
    \item $|I|/|\Gamma| = |P|/|\Gamma| + d|L|/|\Gamma| - \dim H_{\Gamma_\chi}$
        %\item $|I_{\Gamma}|=|P_{\Gamma}|+d|L|-\dim(H_{\Gamma})$ and 
        \item  $|J| \leq |P(J)| + d|L(J)| - \Sigma_{X \in c(J)} \dim H_{\langle X \rangle_{\psi, p}}$ for all subsets $J \subseteq I_{\Gamma}$, where $c(J)$ denotes the set of connected components of the graph induced by $J$.
    \end{enumerate}
    \label{cor:sym_flat_nec}
\end{cor}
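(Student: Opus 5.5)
We will deduce both conditions from Corollary~\ref{cor:flat} and Theorem~\ref{thm:nec_scenes}. The key is to make precise what ``minimally $\Gamma_\chi$-symmetrically flat'' says about the orbit lifting matrix $M^d_{\Gamma_\chi}(S,x)$. Here minimality means that removing any orbit of incidences, i.e.\ any row of the orbit matrix, destroys $\Gamma_\chi$-symmetric flatness.

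\textbf{Step 1: the rows are independent.} Fix a $\Gamma$-generic picture $(S,x)$. By Corollary~\ref{cor:flat}, flatness means
\[
\operatorname{rank} M^d_{\Gamma_\chi}(S,x)=|P|/|\Gamma|+d|L|/|\Gamma|-\dim H_{\Gamma_\chi}.
\]
Note that $\dim H_{\Gamma_\chi}$ depends only on $\tau$ and $\chi$, not on which incidences are present, as long as the incidence geometry stays connected. Suppose some row were a linear combination of the others. Deleting it would leave the rank unchanged, so by Corollary~\ref{cor:flat} the reduced picture would still be flat, contradicting minimality. Hence the rows of $M^d_{\Gamma_\chi}(S,x)$ are independent. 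The point needing care is that Corollary~\ref{cor:flat}, applied to the reduced incidence set, uses the same count $|P|/|\Gamma|+d|L|/|\Gamma|-\dim H_{\Gamma_\chi}$. This holds provided every point and hyperplane orbit keeps at least one incidence. If deleting a row would isolate an orbit, we would handle it separately: the reduced picture then has a free variable and so is non-flat, which is consistent with minimality. Even in that case, removing a dependent row cannot isolate a vertex orbit without lowering the rank, so the argument still goes through.

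\textbf{Step 2: the count (1).} Independence gives $\operatorname{rank} = |I_\Gamma| = |I|/|\Gamma|$, since $\Gamma$ acts freely on incidences (it acts freely on vertices). Combining this with the rank formula from Step 1 gives condition~1.

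\textbf{Step 3: the sparsity condition (2).} Independence of the rows lets us apply Theorem~\ref{thm:nec_scenes} directly, which yields condition~2 for all $J\subseteq I_\Gamma$.

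The only genuine obstacle is Step 1, specifically the interpretation of ``minimal'' at the orbit level (deleting orbits of incidences) and making sure the trivial-scene dimension $\dim H_{\Gamma_\chi}$ is the correct comparison for the reduced geometry. Genericity plays no real role beyond fixing the picture. The conclusion in fact holds for any single minimally flat symmetric picture.
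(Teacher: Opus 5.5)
Your proposal is correct and follows the route the paper intends: the paper states the result as an easy corollary of Theorem~\ref{thm:nec_scenes} (together with Corollary~\ref{cor:flat}). You additionally supply the step the paper leaves implicit, namely that minimality, read as deleting a whole orbit of incidences, forces the rows of $M^d_{\Gamma_\chi}(S,x)$ to be independent, so that the rank is $|I_\Gamma|=|I|/|\Gamma|$ by freeness of the action. Your observation that a dependent row can never be the only row meeting a point- or hyperplane-orbit settles the one delicate point: its entries $\chi(\gamma)$ and $1$ would then be alone in their columns.
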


\begin{ex}[Necessary conditions for $d=2$] \label{ex:scenes_d=2}
Consider dimension $d=2$. Let $\Gamma= \{e, \gamma\}$, and let $\tau(e)=1$ and $\tau(\gamma)=-1$ . 

Suppose that $\chi_1$ is the trivial character, i.e.\ $\chi_1(e) = \chi_1(\gamma) = 1$, so that the combined action $\tau_{\chi_1}$ corresponds to a reflection. As we have seen in Example~\ref{ex:trivial}, $\dim H_{\Gamma_{\chi_1}}=1$ in this case. Similarly, for any unbalanced set of incidence orbits $J$, 
    \(\dim H_{\langle J \rangle_{\psi,p}} = 1\), while for any balanced set of incidence orbits, 
    \(\dim H_{\langle J \rangle_{\psi,p}} = 2\). (For balanced sets, this follows from Theorem~\ref{thm:pic_thm}.)
Hence, in this case, the necessary conditions of Theorem~\ref{thm:nec_scenes}  become

\begin{enumerate}
    \item $|J| \leq |P(J)|+2|L(J)|-1$ for any non-empty unbalanced set of incidence orbits, and 
    \item $|J| \leq |P(J)|+2|L(J)|-2$ for any non-empty balanced set of incidence orbits.
\end{enumerate}

Recall also from Example~\ref{ex:trivial} that for the non-trivial character $\chi_2$ with $\chi_2(\gamma)=-1$, where the combined action $\tau_{\chi_2}$ corresponds to a half-turn,   we also have \(\dim H_{\langle J \rangle_{\psi,p}} = 1\)  for all non-empty unbalanced sets of incidence orbits. For balanced sets, we again have  \(\dim H_{\langle J \rangle_{\psi,p}} = 2\),   by  Theorem~\ref{thm:pic_thm}.  Hence the necessary conditions of Theorem~\ref{thm:nec_scenes} are the same as for the trivial character $\chi_1$. We will return to this point later in Remark~\ref{rmk:pairing} and provide further explanation for why the counting conditions are the same in both cases.
\end{ex}

%\begin{rmk} This pairing of reflection and  half-turn symmetry appearing in Example~\ref{ex:scenes_d=2} is a well-known phenomenon in the rigidity theory of $2$-dimensional bar-joint frameworks. Corollary 5.2 in \cite{cnsw} states that if the action of $\Gamma\cong \mathbb{Z}_2$ is free on the vertex set, then a graph is generically forced $\mathcal{C}_s$-symmetric rigid in the plane if and only if it is generically forced $\mathcal{C}_2$-symmetric rigid in the plane.  Moreover, recall that in the non-symmetric setting, parallel redrawings are known to be equivalent to infinitesimal motions in the plane, and by duality, the same correspondence holds for liftings \bernd{careful here. parallel redrawing gives Maxwell lifting. but lifting to scene?}. We therefore expect this relationship to transfer naturally to the forced symmetric setting.\bernd{hang on. we showed in  \cite{schmil} that fully-sym motion goes to anti-sym parallel redrawing for reflection but to fully-sym parallel redraing in half turn case!  So for the liftings, it reverses again? need to look at this... shall we do an example for parallel drawings case later? }, the phenomenon observed in Example~\ref{ex:scenes_d=2} aligns with these principles and is entirely consistent with established theory. \bernd{add sth on fixed points. though in \cite{cnsw}, there is no result on forced symmetry with fixed points, only for the incidental symmetry isostatic case. but points on mirror go to points at infinity under the transfer. refer back to earlier Remark 3.5 }
%    \end{rmk}

\section{Symmetric hyperplane arrangements and the orbit concurrence geometry matrix}
\label{sec:symmetric_parallel}

\subsection{Symmetric parallel redrawings as duals of symmetric scenes}\label{sec:dualsym}

The duality between scenes and parallel redrawings mentioned in Section \ref{sec:hyperplanes_parallel} (see also \cite{bernd2017sym}) comes from the fact that scenes and parallel redrawings are governed by the same incidence equations
\[
\langle (x(p),y(p),1),(n(\ell),1,h(\ell))\rangle = 0,
\]
but with different variables prescribed. In the problem of finding the scenes that lift a given picture $(S,x)$, the coordinates \(x(p)\in \mathbb{R}^{d-1}\) are fixed and one solves for \(y(p), n(\ell), h(\ell)\). When solving for the parallel redrawings of a hyperplane arrangement, the normals \(n(\ell)\) are fixed and one solves for \(x(p), y(p), h(\ell)\). Thus, the two problems arise from the same bilinear form by fixing complementary sets of variables.

This correspondence can be formalised using projective duality. Liftings are naturally described by the subgroup
\[
G \leq \mathrm{PGL}(d+1,\mathbb{R})
\]
consisting of projective transformations that preserve the projection onto the \((d-1)\)-picture 
while parallel redrawings correspond to a subgroup
\[
D \leq \mathrm{PGL}(d+1,\mathbb{R})
\]
preserving hyperplane normals. Under the duality that sends the point with homogeneous coordinates $[x:y:1]$ to the line with homogeneous coordinates $[x:y:1]$, these two groups are related by the isomorphism
\[
\psi : G \to D, \qquad A \mapsto (A^T)^{-1},
\]
which exchanges points and hyperplanes and preserves incidence. 

Under the duality that sends the point with homogeneous coordinates $[x:y:1]$ to the line with homogeneous coordinates $[x:1:y]$, a scene lifting a picture is sent to a hyperplane arrangement realising the dual incidence geometry with a specified set of normals. In this section, we will use this duality together with the results in Section \ref{sec:sym_scenes} to characterise symmetric parallel redrawings.

Let $S$ be an incidence geometry, let $\Gamma$ be a subgroup of $\Aut (S)$, let $\tau: \Gamma \to O(d-1)$ be a representation of $\Gamma$ and let $\chi$ be a one-dimensional character of $\Gamma$ with values in $\{\pm 1\}$. Suppose that $(S,x)$ is a $\Gamma$-symmetric picture. Recall that a scene that lifts the picture $(S,x)$ is $\Gamma_\chi$-symmetric if and only if
\[(x(\gamma p), y(\gamma p))^T=\tau_\chi(\gamma)(x(p),y(p))^T\]
for all $p \in P$ and $\gamma \in \Gamma$, or, equivalently
\[(x(\gamma p),y(\gamma p),1)^T=\hat{\tau}_\chi(\gamma)(x(p),y(p),1)^T.\]

Dually, we define a hyperplane arrangement $(S,n)$ to be $\Gamma$-symmetric if $n(\gamma \ell)=\tau(\gamma)n(\ell)$ for all $\ell \in L$ and $\gamma \in \Gamma$. We say that a parallel redrawing of $(S,n)$ is $\Gamma_\chi^*$-symmetric if 
\[
(n(\gamma \ell), h(\gamma \ell))^T= \tau_\chi(\gamma)(n(\ell),h(\ell))^T.
\]

We use the convention that the points in $\mathbb{RP}^d$ have homogeneous coordinates $[x:y:1]$, where $x \in \mathbb{R}^{d-1}$ and $y \in \mathbb{R}$. We therefore define $\hat{\tau}_\chi(\gamma)$ in such a way that the action of $\hat{\tau}_\chi(\gamma)$ on $\mathbb{RP}^2$ maintains that the last homogeneous coordinate of each point is $1$. We also use the convention that the lines have homogeneous coordinates $[n:1:h]$, where $n \in \mathbb{R}^{d-1}$ and $h \in \mathbb{R}$. Let

\[\hat{\tau}^*_{\chi}(\gamma) =\begin{pmatrix}
    \tau(\gamma) & 0 & 0\\
    0 & 1 & 0\\
    0 & 0& \chi(\gamma) \\
\end{pmatrix}.\]

Now, the action of $\hat{\tau}^*_{\chi}(\gamma)$ on $\mathbb{RP}^2$ maintains that the penultimate homogeneous coordinate of each line is $1$, and a hyperplane arrangement is $\Gamma_\chi^*$-symmetric if and only if
\[
(n(\gamma \ell),1,h(\gamma \ell))^T=\hat{\tau}^*_{\chi}(\gamma)(n(\ell),1,h(\ell))^T
\]
for all $\ell \in L$ and $\gamma \in \Gamma$.

Recall that the orbit lifting matrix \(M_{\Gamma_\chi}^d(S,x)\) is obtained by imposing the equivariant relation
\[
\langle \hat{\tau}_\chi(\gamma)(x(p),y(p),1)^T,(n(\ell),1,h(\ell))\rangle = 0,
\]
for each orbit of incidences $(\gamma p, \ell) \in I$. Given a $\Gamma$-symmetric set of hyperplane normals $n$, we define the orbit concurrence geometry matrix \(M^{d*}_{\Gamma_\chi^*}(S,n)\) to be the coefficient matrix of the dual system of equations
\[
\langle (x(p),y(p),1),\hat{\tau}^*_{\chi}(\gamma)(n(\ell),1,h(\ell))^T\rangle = 0.
\]

These two systems differ only in which variables are fixed, and the variables acted on by the group element (as the group element acts on the fixed variables). In particular, the convention of directing edges from hyperplane-orbits to point-orbits in the lifting setting, and from point-orbits to hyperplane-orbits in the parallel redrawing setting, reflects precisely this interchange (see Section~\ref{sec:symincgeo}). Consequently, the kernels of \(M_{\Gamma_\chi}^d(S,x)\) and \(M^{d*}_{\Gamma_\chi^*}(S,n)\) -- the spaces of symmetric liftings and symmetric parallel redrawings -- are dual descriptions of the same underlying equivariant incidence system.

Recall that the dual of the incidence geometry $S=(P,L,I)$ is the incidence geometry $S^*=(L,P,I)$. To distinguish between an incidence geometry and its dual, we will denote the line of $S^*$ corresponding to $p \in P$ by $p^*$, and the point of $S^*$ corresponding to $\ell \in L$ by $\ell^*$.

Define the \textit{dual hyperplane arrangement} of a $\Gamma$-symmetric $(d-1)$-picture $(S,x)$ to be the hyperplane arrangement $(S^*, n_x)$ given by by $n_x(p^*)=x(p)$ for all $p \in P$. Similarly, given a $\Gamma_\chi$-symmetric $d$-scene lifting a $\Gamma$-symmetric picture $(S,x)$ we  define the dual parallel redrawing of the hyperplane arrangement $(S,n_x)$ by $h(p^*)=y(p)$, $x(\ell^*)=n(\ell)$ and $y(\ell^*)=h(\ell)$. It is clear that if $(S,x)$ is a $\Gamma$-symmetric picture then $(S^*,n_x)$ is a $\Gamma$-symmetric hyperplane arrangement, and that the dual parallel redrawing of a $\Gamma_\chi$-symmetric scene lifting $(S,x)$ is a $\Gamma_\chi^*$-symmetric parallel redrawing of $(S^*,n_x)$.

\begin{figure}[htp]
\centering
    \begin{tikzpicture}[scale=0.7]
    \filldraw[black] (-1,0) circle (2pt);
    \filldraw[black] (-2,0) circle (2pt);
    \filldraw[black] (-3,0) circle (2pt);
    \filldraw[black] (-4,0) circle (2pt);
    \filldraw[black] (1,0) circle (2pt);
    \filldraw[black] (2,0) circle (2pt);
    \filldraw[black] (3,0) circle (2pt);
    \filldraw[black] (4,0) circle (2pt);

    \draw[thin, dashed] (-5,0)--(5,0);
    \draw[thin, dashed] (0,-3)--(0,3);
\draw (-4,0)--(4,0);

    \node[fill=white, draw=white, inner sep=1pt, anchor=south] at (-1,0.07) {$p_1$};
    \node[fill=white, draw=white, inner sep=1pt, anchor=south] at (-2,0.07) {$p_2$};
    \node[fill=white, draw=white, inner sep=1pt, anchor=south] at (-3,0.07) {$p_3$};
    \node[fill=white, draw=white, inner sep=1pt, anchor=south] at (-4,0.07) {$p_4$};
    \node[fill=white, draw=white, inner sep=1pt, anchor=south] at (1,0.07) {$\gamma p_1$};
    \node[fill=white, draw=white, inner sep=1pt, anchor=south] at (2,0.07) {$\gamma p_2$};
    \node[fill=white, draw=white, inner sep=1pt, anchor=south] at (3,0.07) {$\gamma p_3$};
    \node[fill=white, draw=white, inner sep=1pt, anchor=south] at (4,0.07) {$\gamma p_4$};

    \node[fill=white, draw=white, inner sep=1pt, anchor=south] at (0,-3.5) {(a)};
\end{tikzpicture}
\hspace{0.5cm}
    \begin{tikzpicture}[scale=0.7]

    \draw[thin, dashed] (-5,0)--(5,0);
    \draw[thin, dashed] (0,-3)--(0,3);

    \filldraw[black] (-1,0.5) circle (2pt);
    \filldraw[black] (-2,1) circle (2pt);
    \filldraw[black] (-3,1.5) circle (2pt);
    \filldraw[black] (-4,2) circle (2pt);
    \filldraw[black] (1,-0.5) circle (2pt);
    \filldraw[black] (2,-1) circle (2pt);
    \filldraw[black] (3,-1.5) circle (2pt);
    \filldraw[black] (4,-2) circle (2pt);

    \draw[thin](-4,2)--(4,-2);

        \node[fill=white, draw=white, inner sep=1pt, anchor=south] at (0,-3.5) {(b)};
\end{tikzpicture}
    \caption{(a) A $\mathbb{Z}_2$-symmetric $1$-picture. (b) A half-turn-symmetric $2$-scene lifting the picture shown in (a).}
    \label{fig:duality_scene}
\end{figure}
\begin{figure}[htp]
    \centering
    \begin{tikzpicture}[scale=0.7]

    \draw[thin, dashed] (-5,0)--(5,0);
    \draw[thin, dashed] (0,-5)--(0,5);

    \draw[thin] (-1.5,4.5)--(1.5,-4.5);
    \draw[thin] (-1.5,-4.5)--(1.5,4.5);
    \draw[thin] (-1,4)--(1,-4);
    \draw[thin] (-1,-4)--(1,4);
    \draw[thin] (-2.5,-5)--(2.5,5);
    \draw[thin] (-2.5,5)--(2.5,-5);
    \draw[thin] (-4,4)--(4,-4);
    \draw[thin] (-4,-4)--(4,4);
    
    \node[fill=white, draw=white, inner sep=1pt, anchor=south] at (2,2) {$p_1^\ast$};
    \node[fill=white, draw=white, inner sep=1pt, anchor=south] at (2,4) {$p_2^\ast$};
    \node[fill=white, draw=white, inner sep=1pt, anchor=south] at (1,3) {$p_3^\ast$};
    \node[fill=white, draw=white, inner sep=1pt, anchor=south] at (1,4) {$p_4^\ast$};
    \node[fill=white, draw=white, inner sep=1pt, anchor=south] at (-2,2) {$\gamma p_1^\ast$};
    \node[fill=white, draw=white, inner sep=1pt, anchor=south] at (-2,4) {$\gamma p_2^\ast$};
    \node[fill=white, draw=white, inner sep=1pt, anchor=south] at (-1,3) {$\gamma p_3^\ast$};
    \node[fill=white, draw=white, inner sep=1pt, anchor=south] at (-1,4) {$\gamma p_4^\ast$};
    \node[fill=white, draw=white, inner sep=1pt, anchor=south] at (0,-5.5) {(a)};
\end{tikzpicture}
\hspace{0.8cm}
\begin{tikzpicture}[scale=0.7]

    \draw[thin, dashed] (-5,0)--(5,0);
    \draw[thin, dashed] (0,-5)--(0,5);

    \draw[thin] (-3,-3.5)--(4,3.5);
    \draw[thin] (-3,3.5)--(4,-3.5);
    \draw[thin] (-2,-5)--(3,5);
    \draw[thin] (-2,5)--(3,-5);
    \draw[thin] (-1,-4.5)--(2,4.5);
    \draw[thin] (-1,4.5)--(2,-4.5);
    \draw[thin] (-0.5,-4)--(1.5,4);
    \draw[thin] (-0.5,4)--(1.5,-4);

    %\node[fill=white, draw=white, inner sep=1pt, anchor=south] at (-1,0.2) {$p_1$};
    
    \node[fill=white, draw=white, inner sep=1pt, anchor=south] at (0,-5.5) {(b)};
\end{tikzpicture}
    \caption{(a) The dual hyperplane arrangement of the symmetric picture in $\mathbb{R}^2$ in Figure \ref{fig:duality_scene}~(a). (b) shows the dual parallel redrawing of the scene in Figure \ref{fig:duality_scene}~(b).}
    \label{fig:duality_parallel}
\end{figure}

\begin{ex}
    Let $S$ be the incidence geometry with points $P=\{p_1,p_2,p_3,p_4,p_5,p_6,p_7,p_8\}$ and hyperplanes  $L=\{\ell_1,\ell_2,\ell_3,\ell_4,\ell_5,\ell_6\}$, where $\ell_1$ is incident to $p_1$, $p_2$ and $p_7$, $\ell_2$ is incident to $p_1$, $p_4$ and $p_6$, $\ell_3$ is incident to $p_2$, $p_3$ and $p_4$, $\ell_4$ is incident to $p_3$, $p_5$ and $p_6$, $\ell_5$ is incident to $p_2$, $p_5$ and $p_8$, and $\ell_6$ is incident to $p_6$, $p_7$ and $p_8$. 

    There is an automorphism $\gamma$ of order $2$ of $S$ that maps 
    \[
    p_1 \mapsto p_5, \qquad
    p_2 \mapsto p_6, \qquad
    p_3 \mapsto p_7, \qquad
    p_4 \mapsto p_8
    \]
    and
    \[
    \ell_1 \mapsto \ell_4, \qquad
    \ell_2 \mapsto \ell_5 \qquad 
    \ell_3 \mapsto \ell_6.
    \]

    Let $\Gamma = \langle \gamma \rangle$, and $\tau(\gamma)=-1$. See Figure~\ref{fig:duality_scene} (a) for an example of a $\Gamma$-symmetric $1$-picture $(S,x)$. The coordinates are given by $x(p_1)=[-1:1]$, $x(p_2)=[-2:1]$, $x(p_2)=[-3:1]$ and $x(p_4)=[-4:1]$, where we consider $(S,x)$ as a picture in $\mathbb{RP}^1$. Note that the points of the picture in Figure~\ref{fig:duality_scene} (a) are in the plane, and have homogeneous coordinates given by $x(p_1)=[-1:0:1]$, $x(p_2)=[-2:0:1]$, $x(p_2)=[-3:0:1]$ and $x(p_4)=[-4:0:1]$, and $\gamma p_i$ has homogeneous coordinates $x(\gamma p_i)=[-x(p_i):0:1]$, for $1 \leq i \leq 4$. The picture in Figure \ref{fig:duality_scene} (a) is in fact embedded as the trivial scene lifting the picture $(S,x)$, where the lines are all assigned the hyperplane $y=0$.
    
    Let $\chi$ be the non-trivial character of $\Gamma$, so that $\chi(\gamma)=-1$. The $\Gamma_\chi$-symmetric scenes lifting $(S,x)$ are invariant with respect to the action of 
    \[\hat{\tau}_\chi(\gamma)=\begin{pmatrix}
    -1 & 0 & 0\\
    0 & -1 & 0\\
    0 & 0& 1 \\
\end{pmatrix}\]
    on $\mathbb{RP}^2$, so $\Gamma_\chi$-symmetric scenes lifting $(S,x)$ are $\mathcal{C}_2$-symmetric. The orbit lifting matrix $M_{\Gamma_\chi}^2(S,x)$ is
    \[\begin{bNiceMatrix}[first-row,first-col]
         &p_1&p_2&p_3&p_4&\ell_1&\ell_2&\ell_3\\
		  &1&0&0&0&-1 \quad 1&0 \quad 0& 0 \quad 0\\
          &1&0&0&0&0 \quad 0&-1 \quad 1& 0 \quad 0\\
          &0&1&0&0&-2 \quad 1&0 \quad 0& 0 \quad 0\\
          &0&-1&0&0&0 \quad 0&2 \quad 1& 0 \quad 0\\
          &0&1&0&0&0 \quad 0&0 \quad 0& -2 \quad 1\\
          &0&0&-1&0&3 \quad 1&0 \quad 0& 0 \quad 0\\
          &0&0&1&0&0 \quad 0&0 \quad 0& -3 \quad 1\\
          &0&0&0&1&0 \quad 0&-4 \quad 1& 0 \quad 0\\
          &0&0&0&1&0 \quad 0&0 \quad 0& -4 \quad 1\\
        \end{bNiceMatrix}.\]

        It can be easily verified that the kernel of $M_{\Gamma_\chi}^2(S,x)$ is $1$-dimensional. In fact, the kernel of $M_{\Gamma_\chi}^2(S,x)$ consists of vectors of the form
        \[v_\lambda=\begin{bNiceMatrix}[first-row]
        p_1 & p_2 & p_3 & p_4 & \ell_1 & \ell_1 & \ell_2 & \ell_2 & \ell_3 & \ell_3 & \ell_4 & \ell_4\\
         \lambda & 2\lambda & 3\lambda & 4\lambda & \lambda & 0 & \lambda & 0 & \lambda & 0 & \lambda & 0\\
        \end{bNiceMatrix}^T\]
        where $\lambda \in \mathbb{R}$. The elements of the kernel correspond to the $1$-dimensional space of trivial scenes given by mapping all elements of $\ell$ to the line $\lambda x + y=0$ is, and mapping the point $p_i$ to $(-i, \lambda i)$, for $i \in \{1,2,3,4\}$, and $(i-4, -\lambda (i-4))$ for $i \in \{5,6,7,8\}$. Figure \ref{fig:duality_scene} (b) shows an example of a scene lifting the picture in Figure \ref{fig:duality_scene} (a).

        Now consider the dual hyperplane arrangement $(S^*, x^*)$, pictured in Figure \ref{fig:duality_parallel} (a), given by $n_x(p_1^*)=-1$, $n_x(p_2^*)=-2$, $n_x(p_3^*)=-3$, $n_x(p_4^*)=-4$, $n_x(p_5^*)=n_x(\gamma p_1^*)=1$, $n_x(p_6^*)=n_x(\gamma p_2^*)=2$, $n_x(p_7^*)=n_x(\gamma p_3^*)=3$ and $n_x(p_8^*)=n_x(\gamma p_4^*)=4$. In Figure \ref{fig:duality_parallel}, the hyperplanes are drawn in $\mathbb{R}^2$ as the degenerate parallel redrawing where $p_i^*$ is assigned the line with homogeneous coordinates $[-i:1:0]$, for $i \in \{1,2,3,4\}$, and $[i-4:1:0]$ for $i \in \{5, 6,7,8\}$, the dual hyperplane arrangement of the picture in Figure \ref{fig:duality_scene} (a). The points $\ell_i^*$ are all assigned the point $[0:0:1]$, the dual of the hyperplanes assigned to the lines $\ell_i$ in the trivial scene also in Figure \ref{fig:duality_scene}.
        
        The $\Gamma_\chi^*$-symmetric parallel redrawings of $(S^*,x^*)$ are invariant with respect to the action of 
        \[\hat{\tau}^*_\chi(\gamma)=\begin{pmatrix}
    -1 & 0 & 0\\
    0 & 1 & 0\\
    0 & 0& -1 \\
    \end{pmatrix}\]
    on $\mathbb{RP}^2$. Note that the action of $\hat{\tau}^*_\chi(\gamma)$ on $\mathbb{RP}^2$ is the same as the action of 
    \[\begin{pmatrix}
    1 & 0 & 0\\
    0 & -1 & 0\\
    0 & 0& 1 \\
    \end{pmatrix}\]
    on $\mathbb{RP}^2$ which sends the point $[x:y:1]$ to the point $[x:-y:1]$. Hence $\Gamma_\chi$-symmetric parallel redrawings of $(S^*,x^*)$ are symmetric with respect to reflection in the $x$-axis, rather than rotationally symmetric. The orbit concurrence geometry matrix \(M^{2*}_{\Gamma_\chi}(S^*,n_x)\) is
    \[\begin{bNiceMatrix}[first-row,first-col]
         &p_1^*&p_2^*&p_3^*&p_4^*&\ell_1^*&\ell_2^*&\ell_3^*\\
		  &1&0&0&0&-1 \quad 1&0 \quad 0& 0 \quad 0\\
          &1&0&0&0&0 \quad 0&-1 \quad 1& 0 \quad 0\\
          &0&1&0&0&-2 \quad 1&0 \quad 0& 0 \quad 0\\
          &0&-1&0&0&0 \quad 0&2 \quad 1& 0 \quad 0\\
          &0&1&0&0&0 \quad 0&0 \quad 0& -2 \quad 1\\
          &0&0&-1&0&3 \quad 1&0 \quad 0& 0 \quad 0\\
          &0&0&1&0&0 \quad 0&0 \quad 0& -3 \quad 1\\
          &0&0&0&1&0 \quad 0&-4 \quad 1& 0 \quad 0\\
          &0&0&0&1&0 \quad 0&0 \quad 0& -4 \quad 1\\
        \end{bNiceMatrix}.\]

        The kernel of $M_\Gamma^{2*}(S^*,n_x)$ again consists of the vectors $v_\lambda$. In this setting, $v_\lambda$ corresponds to translation along the $x$-axis. Figure \ref{fig:duality_parallel} (b) shows the dual parallel redrawing of the scene in Figure \ref{fig:duality_scene} (b).
        \label{ex:dual_symmetry}
    \end{ex}
    
The fact that the scene and the dual hyperplane arrangement in Example \ref{ex:dual_symmetry} have different symmetries is a consequence of the fact that a $\Gamma_\chi$-symmetric scene is invariant under the action of $\hat{\tau}_\chi(\gamma)$ on the projective plane, while the dual symmetric hyperplane arrangement is invariant under the action of $\hat{\tau}_\chi^*(\gamma)$ on the projective plane. The possible actions of the matrices $\hat{\tau}_\chi(\gamma)$ and $\hat{\tau}_\chi^*(\gamma)$ on $\mathbb{RP}^2$ are summed up in Table~\ref{tab:groupaction}.

\begin{table}[htp]
\begin{center}
\begin{tabular}{|c|c|c|}
\hline
 & Action of $\hat{\tau}_\chi(\gamma)$ &Action of $\hat{\tau}_\chi^*(\gamma)$ \\
 \hline
$\tau(\gamma)=-1$, $\chi(\gamma)=1$ & Reflection in the $y$-axis & Reflection in the $y$-axis\\
\hline
$\tau(\gamma)=-1$, $\chi(\gamma)=-1$ & Order $2$ rotation & Reflection in the $x$-axis\\
\hline 
$\tau(\gamma)=1$, $\chi(\gamma)=-1$ & Reflection in the $y$-axis & Order $2$ rotation\\
\hline
\end{tabular}
\caption{The actions of the matrices $\hat{\tau}_\chi(\gamma)$ and $\hat{\tau}_\chi^*(\gamma)$ on $\mathbb{RP}^2$.}
  \label{tab:groupaction}
\end{center}
\end{table}

In general, $\hat{\tau}_\chi(\gamma)$ and $\hat{\tau}_\chi^*(\gamma)$ act the same on $\mathbb{RP}^d$ if $\chi$ is trivial, and differently if $\chi$ is non-trivial.
  
\begin{prop}[Duality between symmetric liftings and symmetric parallel redrawings]\label{prop:dual}
Let \(S=(P,L,I)\) be a \(\Gamma\)-symmetric incidence geometry, let \(\tau:\Gamma\to O(d-1)\) be a representation, and let \(\chi:\Gamma\to\{\pm1\}\) be a character. Let \((S,x)\) be a \(\Gamma\)-symmetric \((d-1)\)-picture, and let \(S^*=(L,P,I)\) denote the dual incidence geometry.

Then, under projective duality, there is a linear isomorphism between
\begin{itemize}
    \item the space of \(\Gamma_\chi\)-symmetric liftings of \((S,x)\), and
    \item the space of \(\Gamma_\chi^*\)-symmetric parallel redrawings of the dual hyperplane arrangement $(S^*,n_x)$.
\end{itemize}

In particular, the kernels of the orbit lifting matrix \(M^d_{\Gamma_\chi}(S,x)\) and the corresponding orbit concurrence geometry matrix of $(S^*,n_x)$ are isomorphic.
\end{prop}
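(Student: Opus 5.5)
The plan is to construct the isomorphism explicitly, using the relabelling map already introduced before the statement, and to check that it carries kernel equations to kernel equations row by row. Concretely, given a vector $m$ in the kernel of $M^d_{\Gamma_\chi}(S,x)$ with coordinates $y(p)$ for each point-orbit representative and $(n(\ell),h(\ell))$ for each hyperplane-orbit representative, define $m^*$ by $h(p^*)=y(p)$, $x(\ell^*)=n(\ell)$ and $y(\ell^*)=h(\ell)$. This is a coordinate permutation between $\mathbb{R}^{|P|/|\Gamma|+d|L|/|\Gamma|}$ and the variable space of the orbit concurrence geometry matrix of $(S^*,n_x)$. Note that in $S^*$ the point-orbits are $L/\Gamma$, carrying $d$ variables each, and the hyperplane-orbits are $P/\Gamma$, carrying one variable each. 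The map is therefore a linear bijection, and it suffices to show that it maps the kernel of the first matrix onto the kernel of the second.

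The key step is a row-by-row comparison. Fix an orbit of incidences $(\ell,\gamma p)$. In the lifting setting the edge is directed from $\ell$ to $p$ with gain $\gamma$, and the equation is
\[
\langle \hat{\tau}_\chi(\gamma)(x(p),y(p),1)^T,(n(\ell),1,h(\ell))\rangle=\tau(\gamma)x(p)\cdot n(\ell)+\chi(\gamma)y(p)+h(\ell)=0 .
\]
In $S^*$ the same incidence orbit is $(\ell^*,\gamma p^*)$, directed from the point-orbit $\ell^*$ to the hyperplane-orbit $p^*$ with the same gain $\gamma$. The dual equation is
\[
\langle (x(\ell^*),y(\ell^*),1),\hat{\tau}^*_\chi(\gamma)(n_x(p^*),1,h(p^*))^T\rangle = x(\ell^*)\cdot\tau(\gamma)x(p)+y(\ell^*)+\chi(\gamma)h(p^*)=0 .
\]
Under the substitution above, these two expressions are identical. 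So the row vectors coincide after the column permutation, which is exactly what the displayed matrices in Example~\ref{ex:dual_symmetry} show. The kernels therefore correspond, and this proves the final sentence of the statement.

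To get the statement about liftings and redrawings themselves, I would then invoke Theorem~\ref{Thm:scene_orbit_kernel}, which identifies the kernel of $M^d_{\Gamma_\chi}(S,x)$ with the $\Gamma_\chi$-symmetric scenes lifting $(S,x)$. For the dual side I need the analogous statement: the kernel of the orbit concurrence geometry matrix corresponds to $\Gamma^*_\chi$-symmetric parallel redrawings of $(S^*,n_x)$. I would prove this by copying the proof of Theorem~\ref{Thm:scene_orbit_kernel}, with $\hat\tau^*_\chi$ in place of $\hat\tau_\chi$. Extend a kernel vector by $x(\beta\ell^*)=\tau(\beta)x(\ell^*)$, $y(\beta\ell^*)=y(\ell^*)$ and $h(\beta p^*)=\chi(\beta)h(p^*)$. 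Then check that the equation for $(\beta\ell^*,\beta\gamma p^*)$ reduces to the representative equation, using that $\hat\tau^*_\chi(\beta)\in O(d+1)$ fixes the vector $(x,y,1)$ up to applying $\tau(\beta)$ to its first block. Composing these identifications with the relabelling gives the stated linear isomorphism. It agrees with the explicit duality described before the statement, in which a scene is sent to its dual parallel redrawing.

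I expect the main obstacle to be bookkeeping rather than any conceptual difficulty. The first issue is making sure the gain conventions match: edges go from hyperplane-orbits to point-orbits for liftings, and from point-orbits to hyperplane-orbits for redrawings, so that the same $\gamma$ acts on the fixed data $x(p)$ in both settings. The second issue is confirming that where the extension places the sign $\chi(\beta)$ is consistent with the definition of a $\Gamma^*_\chi$-symmetric redrawing. I would settle both by a direct check on a single incidence and its translate.
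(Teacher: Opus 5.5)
Your row-by-row comparison is correct, and it is the cleanest route to the last sentence of the statement. The row of $M^d_{\Gamma_\chi}(S,x)$ for $(\ell,\gamma p)$ and the row of the dual orbit matrix for $(\ell^*,\gamma p^*)$ both have $((\tau(\gamma)x(p))^T,1)$ in the $d$ columns of $\ell$ (resp.\ $\ell^*$) and $\chi(\gamma)$ in the column of $p$ (resp.\ $p^*$). So the two matrices agree up to relabelling columns, as in Example~\ref{ex:dual_symmetry}. The paper's proof uses the same relabelling, but applies it to whole configurations. There the unquotiented incidence equations are literally the same bilinear equations, and the point symmetry $y(\gamma p)=\chi(\gamma)y(p)$ of the scene is exactly the condition $h(\gamma p^*)=\chi(\gamma)h(p^*)$. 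The paper then simply asserts that $\Gamma^*_\chi$-symmetric redrawings correspond to solutions of the dual orbit system. You try to prove that assertion, and as written that step fails.

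The point part of your extension, $x(\beta\ell^*)=\tau(\beta)x(\ell^*)$ and $y(\beta\ell^*)=y(\ell^*)$, does not produce a parallel redrawing when $\chi(\beta)=-1$. Substituting it, the equation for $(\beta\ell^*,\beta\gamma p^*)$ becomes
\[
x(\ell^*)\cdot\tau(\gamma)x(p)+y(\ell^*)+\chi(\beta)\chi(\gamma)h(p^*)=0,
\]
which is not the representative equation. Your justification is also false. $\hat\tau^*_\chi(\beta)(x,y,1)^T=(\tau(\beta)x,y,\chi(\beta))^T$ does not preserve the last coordinate, and renormalising gives $(\chi(\beta)\tau(\beta)x,\chi(\beta)y,1)$.

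The correct extension is therefore $x(\beta\ell^*)=\chi(\beta)\tau(\beta)x(\ell^*)$ and $y(\beta\ell^*)=\chi(\beta)y(\ell^*)$, which is exactly \eqref{eq:lines} transported by the relabelling. Then $(x(\beta\ell^*),y(\beta\ell^*),1)^T=\chi(\beta)\hat\tau^*_\chi(\beta)(x(\ell^*),y(\ell^*),1)^T$. Orthogonality of $\hat\tau^*_\chi(\beta)$ makes the translated equation $\chi(\beta)$ times the representative one.

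You can see the failure in Example~\ref{ex:dual_symmetry}. Your formula puts $\gamma\ell_1^*$ at $(-\lambda,0)$, which gives $-2\lambda\neq 0$ on the incidence $(\gamma\ell_1^*,\gamma p_1^*)$. The correct formula puts it at $(\lambda,0)$, the $x$-translation described there.

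The slip is inherited from the displayed formula $\tilde m(\gamma\ell)=(\tau(\gamma)n(\ell),h(\ell))$ before Theorem~\ref{Thm:scene_orbit_kernel}, which disagrees with \eqref{eq:lines} when $\chi(\gamma)=-1$. The proof of that theorem actually uses \eqref{eq:lines}. With the corrected extension, your argument goes through.
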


\begin{proof}
A \(\Gamma_\chi\)-symmetric scene that lifts the picture \((S,x)\) is given by orbit variables
\[
\bigl(y(p)\bigr)_{p\in P/\Gamma} \qquad \bigl(n(\ell),h(\ell)\bigr)_{\ell\in L/\Gamma}
\]
satisfying
\[
\langle \hat{\tau}_\chi(\gamma)(x(p),y(p),1)^T,(n(\ell),1,h(\ell))\rangle =0
\]
for every orbit of incidences \((\ell,\gamma p)\). By Theorem~\ref{Thm:scene_orbit_kernel}, the $\Gamma_\chi$-symmmetric scenes that lift $(S,x)$ are precisely the elements of
\(\ker M^d_{\Gamma_\chi}(S,x)\).

Now let $(S^*,n_x)$ be the dual hyperplane arrangement on \(S^*\) defined by assigning to each
\(p\in P/\Gamma\) the hyperplane normal
\[
n(p^*):=x(p).
\]
The $\Gamma_\chi^*$-symmetric parallel redrawings correspond to solutions to the system of equations
\[
\langle(x(\ell^*),y(\ell^*),1),\hat{\tau}^*(\gamma)(n( p^*),1,h(p^*))^T\rangle=0
\]
for all incidences $(\gamma p,\ell) \in I$, where $x(\ell^*)$, $y(\ell^*)$ and $h(\ell^*)$ are variables. Given a $\Gamma_\chi$-symmetric scene lifting $(S,x)$, the dual parallel redrawing is a solution to the system of equations above by definition. Similarly, given a solution to the system of equations above, the $d$-scene given by $y(p)=h(p^*)$, $n(\ell)=x(\ell^*)$ and $h(\ell)=y(\ell^*)$ is $\Gamma_\chi$-symmetric by definition.

Equivalently, the kernels of the orbit lifting matrix $M_{\Gamma_\chi}^d(S,x)$ and the orbit concurrence geometry matrix $M_{\Gamma_\chi}^{d*}(S^*,n_x)$ are isomorphic.
\end{proof}

Let us now determine the space of trivial $\Gamma_\chi^*$-symmetric parallel redrawings of a $\Gamma$-symmetric hyperplane arrangement. Such a redrawing consists of translations in the point coordinates together with a global dilation, subject to the symmetry constraints on the geometric data
\[
n(\gamma \ell)=\tau(\gamma)\,n(\ell), \qquad h(\gamma \ell)=\chi(\gamma)\,h(\ell),
\]
while the vertex coordinate functions $x(p)\in \mathbb{R}^{d-1}$ and $y(p)\in \mathbb{R}$ are not themselves equipped with a group action; instead, symmetry is enforced through compatibility with the incidence equations.
A trivial parallel redrawing is generated by a translation $(a,b)\in \mathbb{R}^{d-1}\times \mathbb{R}$ together with a global dilation $\lambda\in \mathbb{R}$, acting as
\[
x'(p)=x(p)+a,\qquad y'(p)=y(p)+b,\qquad h'(\ell)=h(\ell)-n(\ell)\cdot a - b,
\]
and
\[
x'(p)=\lambda x(p),\qquad y'(p)=\lambda y(p),\qquad h'(\ell)=\lambda h(\ell).
\]
Imposing symmetry on the translated data requires that the transformed affine shifts $h'$ still satisfy
\[
h'(\gamma \ell) = \chi(\gamma)\, h'(\ell).
\]
Substituting the translation formulas and using $n(\gamma \ell) = \tau(\gamma)\, n(\ell)$ shows that this holds if and only if the translation vector $a$ satisfies
\[
\tau(\gamma)\, a = \chi(\gamma)\, a \quad \text{for all } \gamma \in \Gamma.
\]
So the dimension of the  space of symmetric $x$-translations is given by
\[
\frac{1}{|\Gamma|}\sum_{\gamma\in\Gamma}\chi(\gamma)\mathrm{tr}(\tau(\gamma)).
\]
For the vertical translation parameter $b$, we consider the induced transformation
\[
x'(p)=x(p), \qquad y'(p)=y(p)+b, \qquad h'(\ell)=h(\ell)-b.
\]
To preserve symmetry of the affine shift data, we again require
\[
h'(\gamma \ell)=\chi(\gamma)\,h'(\ell).
\]
Substituting the definition of $h'$ and using $h(\gamma \ell)=\chi(\gamma)h(\ell)$ yields
\[
\chi(\gamma)h(\ell)-b=\chi(\gamma)h(\ell)-\chi(\gamma)b,
\]
which implies
\[
b=\chi(\gamma)b \quad \text{for all } \gamma\in\Gamma.
\]
Hence $b$ is unconstrained if $\chi$ is trivial, and $b=0$ if $\chi$ is non-trivial.
Finally, dilation is always $\Gamma_\chi^*$-symmetric since it acts by uniform scalar multiplication on all variables and commutes with the group action.
It follows that the dimension of the space of trivial $\Gamma_\chi^*$-symmetric parallel redrawings is
\[
\frac{1}{|\Gamma|}\sum_{\gamma\in\Gamma}\chi(\gamma)\mathrm{tr}(\tau(\gamma))
\;+\;
\begin{cases}
2 & \text{if } \chi \text{ is trivial},\\[4pt]
1 & \text{if } \chi \text{ is non-trivial}.
\end{cases}
\]

\begin{rmk} \label{rmk:pairing}
We are now ready to explain the pairing of reflection and half-turn symmetry appearing in Example~\ref{ex:scenes_d=2}, which is related to a well-known feature in the (infinitesimal) rigidity theory of $2$-dimensional bar-joint frameworks with reflection (or half-turn) symmetry: infinitesimal motions decompose into symmetry types corresponding to irreducible representations of the symmetry group. In the case of reflection symmetry, they decompose into forced-symmetric ones corresponding to $\chi_1$ (velocity vectors are mirror  images of each other) and anti-symmetric ones corresponding to $\chi_2$ (velocity vectors are mirror images of each other but with a sign change) \cite{bernd2017sym,schmil}.
We revisit Example~\ref{ex:scenes_d=2} in the case when the incidence geometry is a simple graph and the $1$-picture is a ($1$-dimensional) bar-joint framework.

\textbf{(i) $\chi_1$ (trivial character, reflection symmetry):} Here we consider forced-symmetric liftings, where $\chi_1$ is trivial. Geometrically, this means  that we lift the $1$-dimensional framework by vertically extending the ``mirror'' (point of inversion in dimension $1$)  to the next higher dimenion, i.e., the $y$-axis in the plane. As shown in Proposition~\ref{prop:dual},  symmetric liftings of the $1$-picture ($1$-dimensional framework) with this reflection symmetry correspond to  symmetric parallel redrawings of the dual hyperplane arrangement ($2$-dimensional framework) with respect to the  same reflection symmetry  (recall Table~\ref{tab:groupaction}). Note that the space of trivial symmetric liftings is $1$-dimensional, whereas the dimension of trivial symmetric parallel redrawings of the dual structure is $2$-dimensional.  As shown in \cite{schmil}, symmetric parallel redrawings of a framework with respect to reflection symmetry in the plane correspond to  infinitesimal motions of the opposite symmetry type, i.e. anti-symmetric motions.  In particular, the space of trivial anti-symmetric infinitesimal motions with respect to reflection symmetry is also $2$-dimensional.

\textbf{(ii) $\chi_2$ (non-trivial character, half-turn):} Here we again consider forced-symmetric liftings of the $1$-picture, but now $\chi_2$ is non-trivial, so the  two vertices in each orbit of the lifting are forced to have opposite signs in both coordinates. In other words, the resulting $2$-scene must have half-turn symmetry. Symmetric liftings in this setting correspond to  reflection-symmetric parallel redrawings of the dual hyperplane arrangement, where the reflection is in the $x$-axis (recall Table~\ref{tab:groupaction} and Example~\ref{ex:dual_symmetry}). While the space of trivial symmetric liftings with half-turn symmetry is $1$-dimensional, the dimension of trivial symmetric parallel redrawings of the dual structure with reflection symmetry is again $2$-dimensional. Again, as shown in \cite{schmil}, symmetric parallel redrawings of a framework with respect to reflection symmetry correspond to anti-symmetric infinitesimal motions. 

Thus, in both cases, the problem reduces to considering \emph{anti-symmetric infinitesimal rigidity} of $2$-dimensional reflection-symmetric frameworks.  
%Since forced-symmetric rigidity is known to transfer between mirror and half-turn symmetries (Corollary 5.2 in~\cite{cnsw}), the same holds for anti-symmetric rigidity. Therefore, the observed correspondences between liftings, parallel redrawings, and infinitesimal motions are entirely consistent.
%Note that for larger symmetry groups in higher dimensions, different characters typically lead to different dimensions of the corresponding spaces $H_{\Gamma_\chi}$ and hence to different sparsity conditions.
\end{rmk}

\begin{rmk}
    Scene analysis and finding non-trivial parallel redrawings can both be seen as  special cases of projective rigidity of configurations of points and lines, obtained by constraining either the normal of the lines to be fixed or by constraining the first coordinate of each point. 
    Consequently both the orbit lifting matrix and the orbit concurrency geometry matrix can  be obtained from the orbit projective rigidity matrix explained in the recent article \cite{proj_paper1} on projective rigidity fixing certain variables.
\end{rmk}

\subsection{Parallel redrawings with symmetry groups in $O(d)$}\label{sec:pardrgen}

By the duality described in the previous section, we can obtain results for symmetric parallel redrawings that are dual to the results in Section \ref{sec:sym_scenes}. When considering liftings of $(d-1)$-pictures to $d$-scenes it is natural to consider symmetry groups in $O(d-1)$, as those are the possible symmetry groups of the $(d-1)$-pictures. As described in Section \ref{sec:sym_scenes}, we can then obtain the possible symmetry groups of scenes lifting a symmetric picture, by extending the symmetry group of the picture to a symmetry group in $O(d)$ by a one-dimensional character with values in $\{\pm 1\}$. Parallel redrawings with symmetry groups that are given by a symmetry group in $O(d-1)$ extended by a character can be considered as the dual to symmetric scenes.

Not all discrete subgroups of $O(d)$ can be obtained from a subgroup of $O(d-1)$ in the way described in Section \ref{sec:sym_scenes}. For example, the order $3$ rotational symmetry in Example \ref{ex:C3-sym_scene} cannot be obtained in that way. However, we could consider parallel redrawings which preserve order $3$ rotational symmetry. See Figure~\ref{fig:C3-sympar} for an example.

\begin{figure}[htp]
    \centering
    \includegraphics[width=0.49\linewidth]{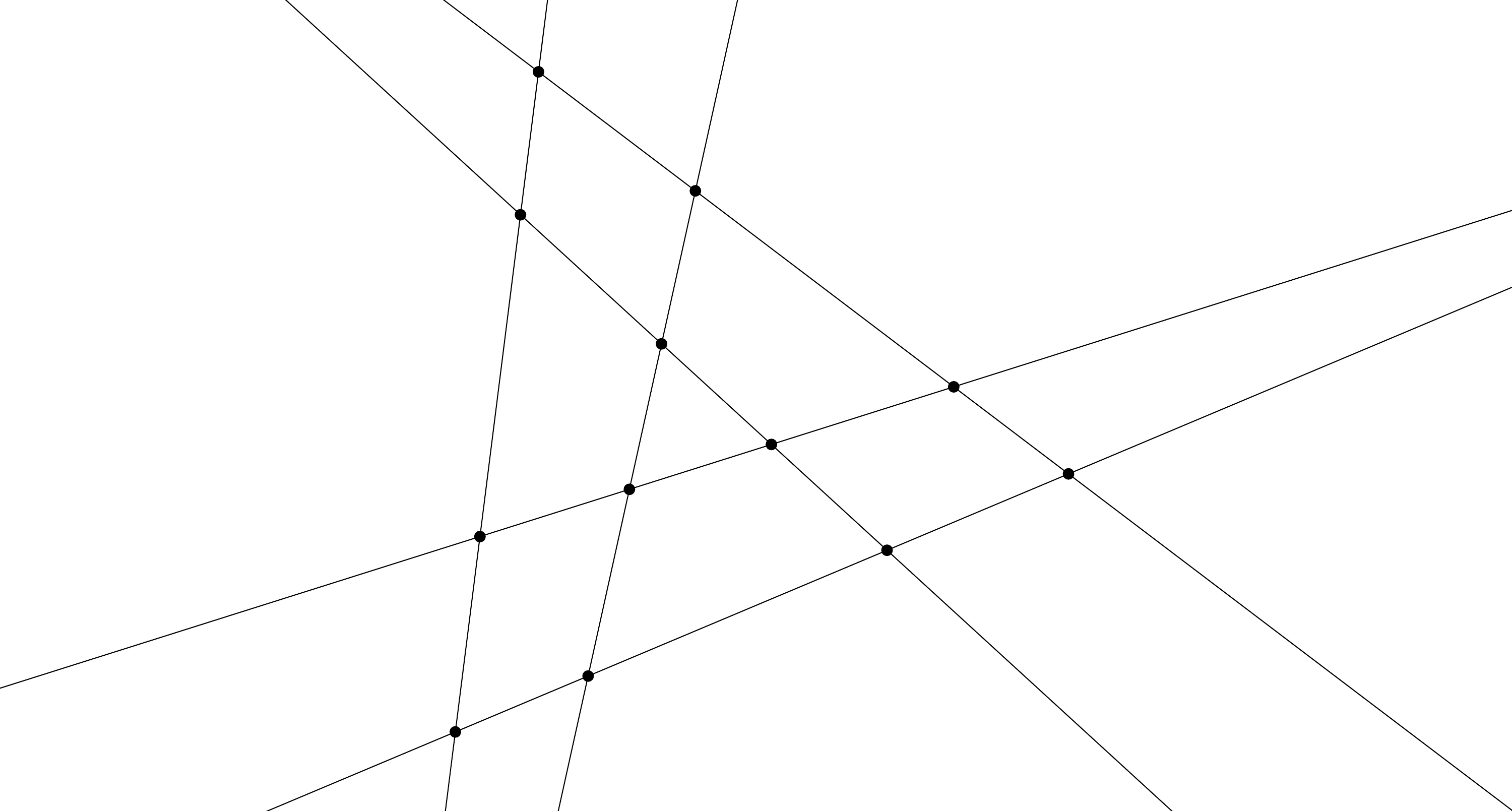}
    \includegraphics[width=0.49\linewidth]{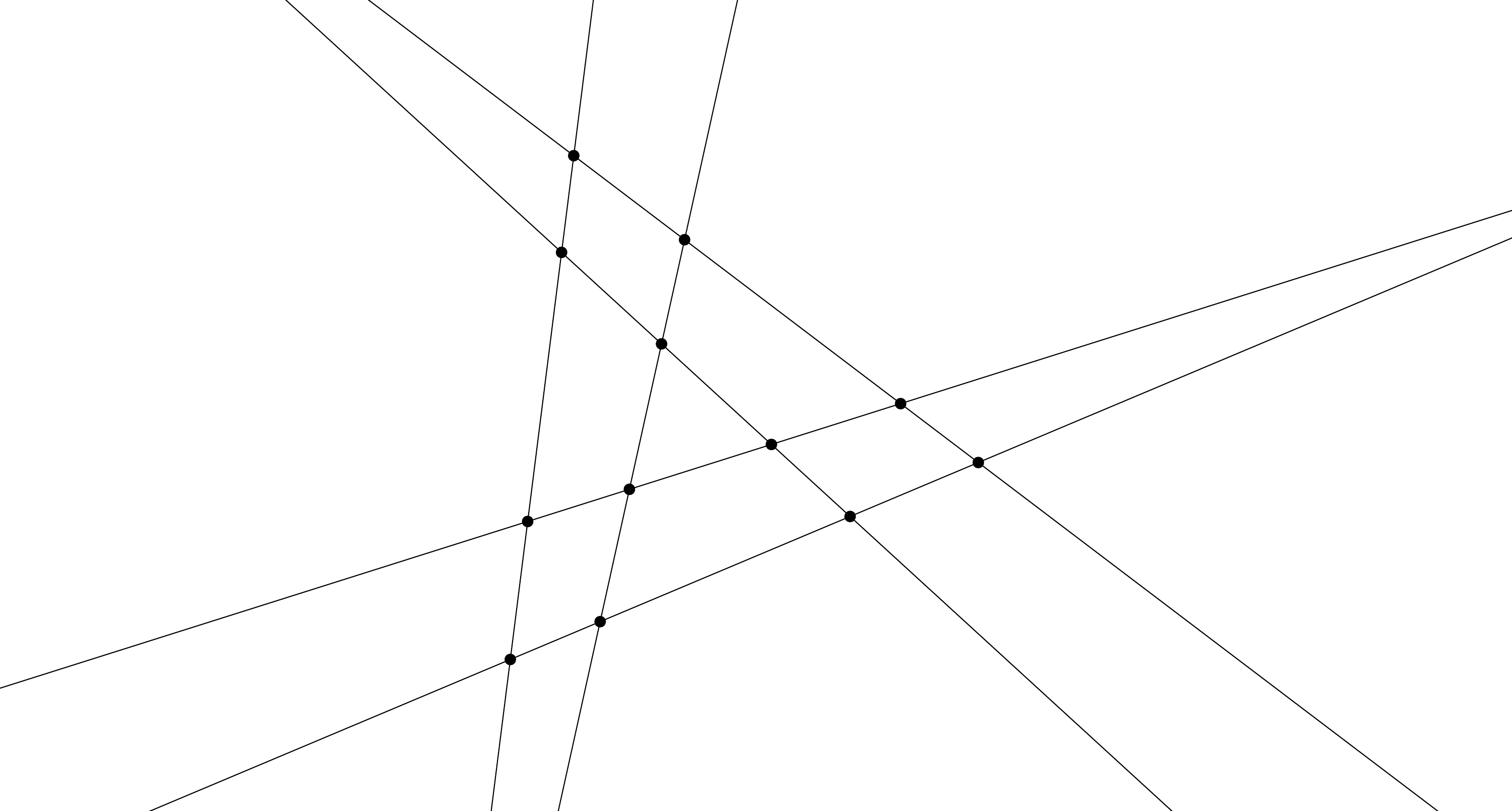}
   \caption{A $3$-fold rotationally symmetric hyperplane arrangement which has a symmetry-preserving parallel redrawing.}
    \label{fig:C3-sympar}
\end{figure}

In this section we extend our results for symmetric parallel redrawings to include all symmetry groups in $O(d)$. In order to be able to formally consider all symmetry groups in $O(d)$, we need to slighty alter our set-up. In the setting introduced in Section \ref{sec:hyperplanes_parallel}, we take $n(\ell) \in \mathbb{R}^{d-1}$, and we consider an equation
\[
    \langle(x(p),y(p),1), (n(\ell),1,h(\ell))\rangle=
    0
\]
for all incidences $(p, \ell) \in I$. One can instead consider $n(\ell) \in \mathbb{R}^d$, and an equation 

\begin{equation}
    \langle(x(p),y(p),1), (n(\ell),h(\ell))\rangle=
    0
    \label{eq:parallel_d}
\end{equation}
for each incidence $(p,\ell) \in I$. Under the mild assumption that the $d$:th coordinate of $n(\ell)$ is non-zero for all $\ell \in L$, considering equations of the form \eqref{eq:parallel_d} is equivalent to the original set-up introduced in Section \ref{sec:hyperplanes_parallel}, since the representation of the hyperplane in homogeneous coordinates is invariant under scaling. 

We define a hyperplane arrangement $(S,n)$ of an incidence geometry $S=(P,L,I)$ to be an assignment of a hyperplane normal $n(\ell) \in \mathbb{R}^d$ to each $\ell \in L$. Let $\Gamma$ be a subgroup of $\Aut (S)$ and let $\tau: \Gamma \to O(d)$ be a representation of $\Gamma$.

A hyperplane arrangement is \emph{$\Gamma$-symmetric} if $n(\gamma\ell)=\tau(\gamma)n(\ell)$ for all $\gamma \in \Gamma$ and $\ell \in L$. Define $\hat{\tau}(\gamma) \in O(d+1)$ by 
\[\hat{\tau}(\gamma) =\begin{pmatrix}
    \tau(\gamma) & 0\\
    0 & 1 \\
\end{pmatrix}.\]

\begin{rmk} 

In this section, we consider parallel redrawings where the hyperplanes and point-sets are invariant under the action of matrices of the form $\hat{\tau}(\gamma)$,
%\[\hat{\tau}(\gamma) =\begin{pmatrix}
%    \tau(\gamma) & 0\\
%    0 & 1 \\
%\end{pmatrix}\]
where $\tau(\gamma) \in O(d)$. The matrices $\hat{\tau}_\chi^*(\gamma)$ considered in Section \ref{sec:dualsym} have the form
\[
\begin{pmatrix}
\tau(\gamma) & 0 & 0\\
0 & 1 & 0\\
0 & 0 &\chi(\gamma)
\end{pmatrix}
\]
where $\tau(\gamma) \in O(d-1)$, so they do not take quite the same form as the matrices that we consider in this section. However, the matrix 
\[
\begin{pmatrix}
\chi(\gamma) \tau(\gamma) & 0 & 0\\
0 & \chi(\gamma) & 0\\
0 & 0 & 1
\end{pmatrix}
\]
is of the form that we consider in this section, and it acts in the same way as $\hat{\tau}_\chi^*(\gamma)$ on $d$-dimensional projective space, since the matrices only differ by a scalar multiple.
\end{rmk}

A parallel redrawing of $(S,n)$ is $\Gamma$-symmetric if
\begin{equation}
    \langle(x(p),y(p),1),\hat{\tau}(\gamma)(n(\ell),h(\ell))^T\rangle=0
    \label{eq:sym_parallel2}
\end{equation}
for all orbits of incidences $(p, \gamma \ell)$, where $x(p)$, $y(p)$ and $h(\ell)$ are variables.

Going back to Example \ref{ex:C3-sym_scene}, we notice that the point coordinates of the configuration are also $\mathcal{C}_3$-symmetric. In general, if a hyperplane arrangement $(S,n)$ is $\Gamma_{\chi}$-symmetric, then the point coordinates of a $\Gamma_{\chi}$-symmetric parallel redrawing of $(S,n)$ are also $\Gamma_{\chi}$-symmetric. Specifically, the point coordinates $(x(p),y(p))$ satisfy
\begin{equation}
    (x(\gamma p), y(\gamma p),1)^T=\hat{\tau}(\gamma)(x(p),y(p),1)^T
    \label{eq:sym_points}
\end{equation}
for all $\gamma \in \Gamma$ and $p \in P$.

Define the orbit concurrence geometry matrix $M_{\Gamma}^{d*}(S,n)$ to be the coefficient matrix of the system of equations of the form $\eqref{eq:sym_parallel2}$. The symmetric parallel redrawings of $(S,n)$ are elements of the kernel of the orbit concurrence geometry matrix. To see this, take $m \in \mathbb{R}^{|L|/|\Gamma| + d|P|/|\Gamma|}$ and suppose that $m$ consists of one coordinate $m(\ell)=h(\ell)$ for each orbit of lines, and $d$ coordinates $m(p)=(x(p),y(p))$ for each orbit of points. Define $\tilde{m} \in \mathbb{R}^{|L|+d|P|}$ by $\tilde{m}(\gamma \ell)=h(\ell)$ for all $\ell \in L$ and $\gamma \in \Gamma$, and define $\tilde{m}(\gamma p)=\tau(\gamma)(x(p), y(p))$ for all $p \in P$ and $\gamma \in \Gamma$.

\begin{thm}
    Let $(S,n)$ be a symmetric hyperplane arrangement. Then $m \in \mathbb{R}^{|L|/|\Gamma|+d|P|/|\Gamma|}$ is an element of the kernel of $M^{d*}_{\Gamma}(S,n)$ if and only if $\tilde{m}$ determines a symmetric parallel redrawing of $(S,n)$.
    \label{thm:sym_kernel_O(d)}
\end{thm}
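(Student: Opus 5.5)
The plan is to mirror the proof of Theorem~\ref{Thm:scene_orbit_kernel}, with the roles of points and hyperplanes exchanged. The group now acts on the point variables $(x(p),y(p))$ through $\tau(\gamma)\in O(d)$, while the affine shifts $h(\ell)$ stay constant along orbits. The argument should show two things. First, each incidence equation of the full system $M^{d*}(S,n)$, evaluated at $\tilde m$, reduces to one of the orbit equations \eqref{eq:sym_parallel2} at $m$. Second, every incidence in a given $\Gamma$-orbit yields an equivalent equation. Together these show that $\tilde m$ satisfies all $|I|$ incidence equations exactly when $m$ satisfies the $|I|/|\Gamma|$ orbit equations.

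\textbf{Step 1: a single representative incidence.} Take $(p,\gamma\ell)$, where $p$ and $\ell$ are the chosen orbit representatives. Since the arrangement is symmetric, $n(\gamma\ell)=\tau(\gamma)n(\ell)$, and $\tilde m$ assigns $h(\gamma\ell)=h(\ell)$. Hence
\[
(n(\gamma\ell),h(\gamma\ell))^T=\hat\tau(\gamma)(n(\ell),h(\ell))^T .
\]
The incidence equation $\langle (x(p),y(p),1),(n(\gamma\ell),h(\gamma\ell))\rangle=0$ is therefore literally equation~\eqref{eq:sym_parallel2}.

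\textbf{Step 2: an arbitrary incidence in the same orbit.} A general member of the orbit is $(\beta p,\beta\gamma\ell)$. By the definition of $\tilde m$, the point is sent to $(x(\beta p),y(\beta p),1)^T=\hat\tau(\beta)(x(p),y(p),1)^T$, which is \eqref{eq:sym_points}. The hyperplane data become $\hat\tau(\beta\gamma)(n(\ell),h(\ell))^T=\hat\tau(\beta)\hat\tau(\gamma)(n(\ell),h(\ell))^T$, using that $\hat\tau$ is a homomorphism. Since $\hat\tau(\beta)\in O(d+1)$, it preserves the inner product and can be cancelled. The equation then becomes exactly \eqref{eq:sym_parallel2}.

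\textbf{Step 3: conclusion.} Every equation of $M^{d*}(S,n)$ evaluated at $\tilde m$ is equivalent to a row of $M^{d*}_\Gamma(S,n)$ applied to $m$, and every row of $M^{d*}_\Gamma(S,n)$ arises in this way. So $m\in\ker M^{d*}_\Gamma(S,n)$ if and only if $\tilde m$ is a parallel redrawing. This redrawing is symmetric by construction, since its points satisfy \eqref{eq:sym_points} and its shifts are orbit-invariant.

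For the converse, start from any symmetric parallel redrawing. Restricting it to the orbit representatives gives a vector $m$ with $\tilde m$ equal to the original redrawing. This uses the free action, which makes each orbit element $\beta p$ determine $\beta$ uniquely, so $\tilde m$ is well defined.

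\textbf{Main obstacle: consistency of conventions.} The delicate point is checking that the conventions agree. The edge gain for $(p,\gamma\ell)$ must record the group element acting on the hyperplane, since here edges are directed from point-orbits to hyperplane-orbits. The $d$-th coordinate normalisation $n(\ell)\in\mathbb{R}^d$ must also cause no sign issues. Because $\hat\tau$ fixes the last homogeneous coordinate, no character-induced rescaling is needed here, unlike \eqref{eq:lines}. With that checked, the rest is the same cancellation used in Theorem~\ref{Thm:scene_orbit_kernel}.
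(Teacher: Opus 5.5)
Your proposal is correct and follows essentially the same route as the paper. The paper just says the proof is analogous to Theorem~\ref{Thm:scene_orbit_kernel}, and its suppressed draft carries out exactly your two steps: the representative incidence $(p,\gamma\ell)$ reduces to \eqref{eq:sym_parallel2}, and for $(\beta p,\beta\gamma\ell)$ one cancels $\hat\tau(\beta)\in O(d+1)$. Your additional remarks are accurate: freeness of the action is what makes $\tilde m$ well defined, and, unlike in \eqref{eq:lines}, no character rescaling is needed here.
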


\begin{proof}
    The proof is analogous to the proof of Theorem \ref{Thm:scene_orbit_kernel}. 
\end{proof}
\begin{comment}
        
    Let $(S,n)$ be a $\Gamma$-symmetric hyperplane arrangement, and consider a $\Gamma$-symmetric parallel redrawing of $(S,n)$. Then the equation in $M^{d*}(S,n)$ corresponding to the incidence $(p, \gamma \ell)$ is

\begin{equation}
    \langle(x(p),y(p),1), (n(\gamma \ell),h(\gamma \ell))\rangle=
    0.
    \label{eq:sym_parallel}
\end{equation}

    As $(S,x)$ is $\Gamma$-symmetric, equation~\eqref{eq:sym_parallel} is equivalent to equation \eqref{eq:sym_parallel2}.

The equation corresponding to another incidence $(\beta p, \beta \gamma \ell)$ in the same orbit is

\begin{equation}
    <(x(\beta p), y(\beta p),1),(n(\beta \gamma \ell),h(\beta \gamma \ell)) >=0
    \label{eq:sym_scene_orbit}
\end{equation}

which is equivalent to 

\begin{equation}
    <\hat{\tau}(\beta) (x(p),y(p),1), \hat{\tau}(\beta) \hat{\tau}(\gamma) (n(\ell),h(\ell))>=0.
    \label{eq:sym_parallel_orbit_2}
\end{equation}

As $\hat{\tau}(\beta)$ is an element of $O(d+1)$, equation~\eqref{eq:sym_parallel_orbit_2} is equivalent to equation~\eqref{eq:sym_parallel2}. The conclusion follows.
\end{comment}

The kernel of $M^{d*}_{\Gamma}(S,n)$ contains at least the translations that preserve $\Gamma$-symmetry. Let $T_{\Gamma}$ denote the space of such translations. The dimension of $T_{\Gamma}$ is the multiplicity of the trivial representation in $\tau$, which can be computed as follows:

\[
\dim(T_{\Gamma})=\frac{1}{|\Gamma|}(\Sigma_{\gamma \in \Gamma} \text{tr}(\tau(\gamma))).
\]

We now obtain the following necessary conditions for row independence in $M^{d*}_{\Gamma}(S,n)$:

\begin{thm}
    Let $S=(P,L,I)$ be an incidence geometry, and let $(S,n)$ be a $\Gamma$-symmetric hyperplane arrangement. If the rows of $M^{d*}_{\Gamma}(S, n)$ are independent, then the following holds:
    $$
    |J| \leq |L(J)| + d|P(J)| - \Sigma_{X \in c(J)} \dim T_{\langle X \rangle_{\psi, p}}$$
    for all subsets $J \subseteq I_{\Gamma}$, where $c(J)$ denotes the set of connected components of the subgraph generated by $J$.
    \label{thm:nec_sym_parallel}
\end{thm}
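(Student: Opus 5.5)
The plan mirrors the proof of Theorem~\ref{thm:nec_scenes}, exchanging the roles of points and hyperplanes. First I reduce to the connected case. If $J\subseteq I_\Gamma$ splits into components $X\in c(J)$, then the rows indexed by different components have disjoint column supports, since each component uses its own point-orbits and hyperplane-orbits. Hence $|J|=\sum_X|X|$, $|L(J)|=\sum_X|L(X)|$ and $|P(J)|=\sum_X|P(X)|$, and the inequality is additive over components. So it suffices to prove
\[
|J|\le |L(J)|+d|P(J)|-\dim T_{\langle J\rangle_{\psi,p}}
\]
for connected $J$. Let $R_J$ denote the rows of $M^{d*}_\Gamma(S,n)$ indexed by $J$, restricted to the $|L(J)|+d|P(J)|$ columns belonging to $L(J)\cup P(J)$. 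Independence of the rows gives $\operatorname{rank}R_J=|J|$. It therefore suffices to exhibit an injective linear map $T_{\langle J\rangle_{\psi,p}}\to \ker R_J$.

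For the construction, let $t\in\mathbb{R}^d$ be a translation vector fixed by $\tau(\gamma)$ for all $\gamma\in\langle J\rangle_{\psi,p}$. I assign $(x(q),y(q))=t$ to every point-orbit representative $q\in P(J)$. For each $\ell\in L(J)$ I set $h(\ell)=-\langle t,n(\ell)\rangle$, which is the shift making the hyperplane with normal $n(\ell)$ pass through $t$. The main step is to check that the row for an orbit incidence $(q,\gamma\ell)$ vanishes. That row reads
\[
\langle (t,1),\hat\tau(\gamma)(n(\ell),h(\ell))^T\rangle=\langle \tau(\gamma)^{-1}t,n(\ell)\rangle+h(\ell)=\langle \tau(\gamma)^{-1}t-t,n(\ell)\rangle ,
\]
so it vanishes as soon as $\tau(\gamma)t=t$.

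This is the main obstacle. The gain $\gamma$ on an individual edge need not lie in $\langle J\rangle_{\psi,p}$; only the gains of closed walks do. To handle this, I first apply a switching operation along a spanning tree of the connected component. This normalises all tree edges to have identity gain, and switching changes neither the rank nor the column structure, as noted in Section~\ref{sec:symincgeo}. After switching, every non-tree edge gain equals the gain of a closed walk through the base vertex, so every edge gain lies in $\langle J\rangle_{\psi,p}$, up to the conjugation that fixes which base vertex is used. This conjugation changes $T_{\langle J\rangle_{\psi,p}}$ only by an isometry $\tau(\beta)$, so its dimension is unchanged.

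An equivalent viewpoint avoids switching. Assign each representative $q$ the translate $\tau(\delta_q)^{-1}t$, where $\delta_q$ is the gain of the tree path from $p$ to $q$, and use consistency around closed walks. I would write the argument in the switched form for brevity.

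With all edge gains in the stabiliser subgroup, every row of $R_J$ vanishes on the constructed vector. The map $t\mapsto$ (this vector) is linear and injective, since it is nonzero on the point coordinates whenever $t\ne0$. Hence $\dim\ker R_J\ge\dim T_{\langle J\rangle_{\psi,p}}$, and rank–nullity gives the connected inequality. Summing over components of $J$ then yields the statement.
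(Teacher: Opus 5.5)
Your proposal is correct and follows the same route as the paper, whose proof is given by analogy with Theorem~\ref{thm:nec_scenes}: reduce to connected $J$, inject the $\langle J\rangle_{\psi,p}$-invariant translations into the kernel of the rows indexed by $J$ (common point positions, with each hyperplane shifted to pass through them), and apply rank--nullity. Your switching along a spanning tree (equivalently, twisting by tree-path gains) makes explicit a point the paper's argument passes over: an individual edge gain need not lie in $\langle J\rangle_{\psi,p}$, so the untwisted constant assignment need not lie in the kernel.
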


\begin{proof}
    The proof is analogous to the proof of Theorem \ref{thm:nec_scenes}.
\end{proof}

If the $\Gamma$-symmetric hyperplane arrangement $(S,n)$ has only trivial parallel redrawings, then the $\Gamma$-symmetric translations are the only elements of the kernel of $M^{d*}_{\Gamma}(S,n)$. On the other hand, if the hyperplane arrangement has some non-trivial parallel redrawing, then dilation is also an element of the kernel of $M^{d*}_{\Gamma}(S,n)$. We say that a $\Gamma$-symmetric hyperplane arrangement $(S,n)$ is \textit{$\Gamma$-symmetrically robust} if the only $\Gamma$-symmetric parallel redrawings of $(S,n)$ are translations and dilations.

A hyperplane arrangement is \textit{$\Gamma$-generic} if the multiset $\{n(\ell_1),\ldots,n(\ell_j)\}$ is algebraically independent over $\mathbb{Q}$, where $\{\ell_1,\ldots,\ell_j\}$ is a set of representatives of the orbits of lines under the action of $\Gamma$. The following corollary is immediate from the above discussion and Theorem \ref{thm:nec_sym_parallel}. 

\begin{cor}
    Let $S=(P,L,I)$ be an incidence geometry, and let $\Gamma$ be a group acting on $S$. Then if all $(d-1)$-dimensional $\Gamma$-generic hyperplane arrangements $(S,n)$ are $\Gamma$-symmetrically robust the following holds:
    \begin{enumerate}
        \item $|I_{\Gamma}|=|L_{\Gamma}|+d|P_{\Gamma}|-(1+\frac{1}{|\Gamma|}(\Sigma_{\gamma \in \Gamma} \text{tr}(\tau(\gamma))))$ and 
        \item  $|J| \leq |L_{\Gamma}(J)| + d|P_{\Gamma}(J)| - (1+\Sigma_{X \in c(J)} \frac{1}{|\langle X \rangle_{\psi, p}|}(\Sigma_{\gamma \in \langle X \rangle_{\psi, p}} \text{tr}(\tau(\gamma)))$ for all subsets $J \subseteq I_{\Gamma}$ with $|J| \geq 2$, where $c(J)$ denotes the set of connected components of the graph generated by $J$.
    \end{enumerate}
    \label{cor:sym_robust_nec}
\end{cor}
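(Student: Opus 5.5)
The plan is to derive both items from Theorem~\ref{thm:nec_sym_parallel}, applied to a generic arrangement, together with an exact count of the kernel. First fix a $\Gamma$-generic arrangement $(S,n)$. By Theorem~\ref{thm:sym_kernel_O(d)}, the kernel of $M^{d*}_{\Gamma}(S,n)$ is exactly the space of $\Gamma$-symmetric parallel redrawings. Robustness means this kernel consists only of the symmetric translations $T_\Gamma$ together with the dilation. We need the dilation to be a nonzero kernel vector independent of the translations, i.e.\ the generic redrawing is not degenerate. Granting this, the kernel has dimension $1+\dim T_\Gamma = 1+\frac{1}{|\Gamma|}\sum_{\gamma}\mathrm{tr}(\tau(\gamma))$. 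Rank--nullity then gives
\[
\operatorname{rank} M^{d*}_{\Gamma}(S,n) = |L_\Gamma|+d|P_\Gamma| - 1 - \dim T_\Gamma .
\]

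For item 1 I would read the statement, as in Corollary~\ref{cor:sym_flat_nec}, as concerning \emph{minimal} robustness. Then the rows are independent: removing any incidence orbit destroys robustness, so no row is redundant, and this holds generically. Independence gives $|I_\Gamma| = \operatorname{rank}$, which is the equality in item 1.

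For item 2, take $J\subseteq I_\Gamma$ with $|J|\ge 2$. The rows of $J$ are independent, so $|J|$ equals the rank of the corresponding submatrix. I would bound the corank of that submatrix in the manner of the proof of Theorem~\ref{thm:nec_scenes}. For each connected component $X\in c(J)$ there is a translation invariant under $\langle X\rangle_{\psi,p}$. Concretely, pick $a$ fixed by $\tau(\langle X\rangle_{\psi,p})$, move every point of $P(X)$ by the appropriate group image of $a$, and shift each $h(\ell)$ accordingly. The incidence equations of $X$ still hold, because the gains around closed walks fix $a$. This yields $\dim T_{\langle X\rangle_{\psi,p}}$ independent kernel vectors supported on the variables of $X$. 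In addition, one global dilation of any non-degenerate solution restricted to $J$ gives one more kernel vector. Summing over components, the corank is at least $1+\sum_X \dim T_{\langle X\rangle}$, which is item 2.

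The main obstacle is justifying that extra $-1$ for the dilation on every subset with $|J|\ge2$. This requires the restricted redrawing to be non-degenerate and the dilation to be independent of the translations. Generically I would argue as follows: for $|J|\ge 2$ the support $P(J)\cup L(J)$ admits a solution with two distinct points, which the dilation moves non-translationally. If that fails for some component, I would fall back on Theorem~\ref{robust} (Whiteley) for balanced pieces, and on the analogous count with a single global $-1$ in the unbalanced case.
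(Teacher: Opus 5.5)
Your route is the paper's: it calls the corollary immediate from Theorem~\ref{thm:nec_sym_parallel} together with the count ``kernel $=T_\Gamma\oplus$ dilation''. Item~1 is fine under your minimal-robustness reading, which is genuinely needed, since a robust arrangement with a redundant incidence orbit violates the equality. The non-degeneracy you grant even follows from minimality: if the generic kernel were only $T_\Gamma$, deleting a row would leave a kernel $T_\Gamma$ or $T_\Gamma\oplus\langle v\rangle$, and both are robust, contradicting minimality.

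The gap is the extra $-1$ in item~2, and your justification for it fails. Let a point orbit $p$ be incident to two distinct hyperplane orbits, and take $J=\{(p,\gamma_1\ell_1),(p,\gamma_2\ell_2)\}$. This is a balanced path, so $\dim T_{\langle J\rangle_{\psi,p}}=d$. The two rows are independent, since their $1$'s lie in different $h$-columns. Every solution of them is merely a translation of $p$ with the shifts adjusted, so the dilation adds nothing. Item~2 would then demand $2\le 2+d-(1+d)=1$. So item~2 fails for every arrangement with independent rows having such a $p$. This already happens for trivial $\Gamma$, $d=2$ and the triangle, whose generic arrangements are minimally robust. Hence no argument can supply the $-1$ for all $|J|\ge 2$.

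Even for $J$ with several point orbits, your claim that ``the support admits a solution with two distinct points'' is a properness hypothesis, not a consequence of robustness. Take trivial $\Gamma$ and $d=2$. The incidence geometry $K_4$ ($4$ points, $6$ lines) with generic normals forces all its points to coincide. Adding a point $q$ on one of its lines and on one new line gives a minimally robust arrangement, since $14=7+10-3$. Yet its $12$ $K_4$-incidences violate $12\le 6+8-3$.

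Falling back on Theorem~\ref{robust} does not help. Its subset count characterises properness, not robustness. Its quoted form also has the same defect: the triangle is proper, yet two incidences at one vertex violate the count. So it must be read for subsets with at least two points.

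What is missing is therefore a hypothesis rather than a step. Assume in addition that $\Gamma$-generic arrangements are proper, meaning the points of the derived geometry are pairwise distinct. Restrict item~2 to sets $J$ having a component $X$ that is unbalanced or has $|P(X)|\ge 2$. Then the restriction of a proper redrawing to the variables of $X$ is a kernel vector of the $J$-rows lying outside the span of all the $\langle X'\rangle_{\psi,p}$-invariant translations, in which every component of the derived graph is collapsed to a single point. With that, your argument yields item~2. The paper's one-line proof is silent on exactly this point.
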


Finally, we end the section with a further comment on why we do not consider arbitrary symmetry groups in $O(d)$ in the dual situation for scenes. In the original setting, we have $x(p) \in \mathbb{R}^{d-1}$, and we consider an equation
\[
    \langle(x(p),y(p),1), (n(\ell),1,h(\ell))\rangle=
    0
\]
for all incidences $(p, \ell) \in I$. Dually to the set-up for parallel redrawings, one can instead consider $x(p) \in \mathbb{R}^d$, and an equation 

\begin{equation}
    \langle(x_1(p),\ldots,x_{d-1}(p),y(p),x_d(p)), (n(\ell),1,h(\ell))\rangle=
    0
    \label{eq:pic_d}
\end{equation}
for each incidence $(p,\ell) \in I$. Again, under the mild assumption that $x_d(p) \neq 0$, this is equivalent to the original set-up in Section \ref{sec:pic_scenes}. The assumption that $x_d \neq 0$ means that the point $x(p)$ is not a point at infinity of $\mathbb{RP}^{d-1}$. We will see that in this case, the symmetries of the pictures do not transfer neatly to symmetries of scenes in Euclidean space.

\begin{ex}
Consider $1$-pictures of the graph $K_3$ with vertex set $\{p_1,p_2,p_3\}$ and edges 
$\{p_1,p_2\}$, $\{p_1,p_3\}$, $\{p_2,p_3\}$. Note that the group $\mathcal{C}_3$ is a subgroup of the automorphism group of $K_3$. Let $\Gamma=\mathcal{C}_3$, and let $\gamma=(p_1p_2p_3)$ be the generator of $\mathcal{C}_3$. 
Define $$\tau(\gamma)=\begin{pmatrix}
    \cos{\frac{\pi}{3}} & \sin{\frac{\pi}{3}} \\
    -\sin{\frac{\pi}{3}} & \cos{\frac{\pi}{3}} \\
\end{pmatrix}.$$ Let us define a $\Gamma$-symmetric $1$-picture of $K_3$.
%
%Let $x(p_1)=[1:1] \in \mathbb{RP}^1$. Then $x(p_2)=x(\gamma p_1)=\tau(\gamma)x(p)=[\frac{1+\sqrt{3}}{2}:\frac{1-\sqrt{3}}{2}]$. Similarly, the point $x(p_3)=x(\gamma^2 p_1)=\tau(\gamma)^2x(p_1)=[\frac{-1+\sqrt{3}}{2}:\frac{-1-\sqrt{3}}{2}]$. The corresponding affine points in $\mathbb{R}^1$ are the points $x(p_1)=1$, $x(p_2)=\frac{1+\sqrt{3}}{1-\sqrt{3}}$ and $x(p_3)=\frac{-1+\sqrt{3}}{\frac{-1-\sqrt{3}}{2}}$. 
Let $x(p_1)=[1:1] \in \mathbb{RP}^1$. Then
\[
x(p_2)=x(\gamma p_1)=\tau(\gamma)x(p_1)=\left[\frac{1+\sqrt{3}}{2}:\frac{1-\sqrt{3}}{2}\right].
\]
Rescaling homogeneous coordinates, this is equivalent to
\[
x(p_2)=[1:\tfrac{1-\sqrt{3}}{1+\sqrt{3}}]=[1:-2+\sqrt{3}].
\]
Similarly,
\[
x(p_3)=x(\gamma^2 p_1)=\tau(\gamma)^2 x(p_1)=\left[\frac{-1+\sqrt{3}}{2}:\frac{-1-\sqrt{3}}{2}\right],
\]
which simplifies to
\[
x(p_3)=[1:\tfrac{-1-\sqrt{3}}{-1+\sqrt{3}}]=[1:-2-\sqrt{3}].
\]
Thus, the corresponding affine points in $\mathbb{R}^1$ (using representatives of the form $[1:t]$) are
\[
x(p_1)=1,\quad x(p_2)=-2+\sqrt{3},\quad x(p_3)=-2-\sqrt{3}.
\]

% Note that $\tau(\gamma)$ acts on $\mathbb{R}^2$ by a rotation of order $3$ centred at the origin, whereas the picture is defined  in $\mathbb{RP}^{1}$. So while symmetry of the homogeneous coordinates under the action of the matrix $\tau(\gamma)$ does impose conditions on the points of the picture, in the sense that the symmetry of homogeneous coordinates means that the position of all points in an orbit can be determined from one representative of the orbit, symmetry of the homogeneous coordinates under the action of $\tau(\gamma)$ does not impose a Euclidean symmetry of the $1$-picture.

Note that $\tau(\gamma)$ acts on $\mathbb{R}^2$ as a rotation of order $3$ centred at the origin, whereas the picture is defined in $\mathbb{RP}^1$. Thus, symmetry of the homogeneous coordinates under the action of $\tau(\gamma)$ imposes conditions on the points of the picture, in the sense that all points in an orbit are determined by a single representative. However, this symmetry does not induce a Euclidean symmetry of the resulting $1$-picture in $\mathbb{R}^1$.

%Furthermore, let $\chi: \Gamma \to \{\pm 1\}$ be the trivial character. The matrix $\tau_{\chi}$ acting on the scenes over $(G,x)$ would then be \[\tau_{\chi}(\gamma)=\begin{pmatrix}
%    \cos{\frac{\pi}{3}} & 0 & \sin{\frac{\pi}{3}} \\
%    0 & 1 & 0\\
%    -\sin{\frac{\pi}{3}} & 0 & \cos{\frac{\pi}{3}} \\
%\end{pmatrix}.\] 

%Note that if we consider scenes over the picture $x$ that are symmetric with respect to the action of $\tau_\chi$ on $\mathbb{PR}^2$, then the normals of the lines would not have a $\mathcal{C}_3$-symmetry.
\end{ex}

\begin{comment}
\begin{figure}[h]
    \centering
    \includegraphics[width=0.5\linewidth]{Figures/ref_sym.pdf}
    \caption{A configuration with a parallel redrawing that preserves reflectional symmetry}
    \label{fig:ref_parallel}
\end{figure}
\end{comment}

\section{Relation to symmetric parallel redrawings of graphs}
\label{sec:robustness}

In this section, we relate the results of this paper to existing results about parallel redrawings of graphs. Let $G=(V,E)$ be a graph. A \textit{$d$-dimensional framework} $(G,\rho)$ of a graph is an assignment $\rho: V \to \mathbb{R}^d$. A framework is $\textit{generic}$ if the coordinates of the points are algebraically independent over $\mathbb{Q}$.

A framework $(G, \rho')$ is a \textit{parallel redrawing} of a framework $(G, \rho)$ if $\rho(v_i)-\rho(v_j)$ is parallel to $\rho'(v_i)-\rho'(v_j)$ for all edges $(v_i,v_j) \in E$. In other words, a framework $(G,\rho')$ is a parallel redrawing of a framework $(G,\rho)$ if 
\begin{equation}
    \langle(\rho(v_i)-\rho(v_j))^{\perp}, \rho'(v_i)-\rho'(v_j)\rangle=0
\label{eq:graph_parallel}
\end{equation}

\noindent
for all edges $(v_i,v_j) \in E$. Finding all parallel redrawings amounts to solving a system of $|E|$ equations of the form \eqref{eq:graph_parallel}, where $\rho'(v)$ consists of $d$ variables for all $v \in V$. If $d \geq 3$, then finding parallel redrawings of a  framework $(G, \rho)$, i.e. finding frameworks $(G, \rho')$ where the edges are parallel to the edges in the original framework, is not the same as finding parallel redrawings of a hyperplane arrangement. So in this section, we restrict to dimension $d=2$.

Let the \textit{parallel design matrix} $D(G,\rho)$ be the coefficient matrix of the system of $|E|$ equations of the form \eqref{eq:graph_parallel}. Suppose that $e=(v_1,v_2)$ is an edge of $G$, and let $\rho(v_1)=(x_1, y_1)$ and $\rho(v_2)=(x_2,y_2)$. Assuming that all points in the framework are distinct, the line segment between $\rho(v_1)$ and $\rho(v_2)$ is a segment of a line $f_ex+y+h$, where $f_e=-\frac{y_2-y_1}{x_2-x_1}$. Let $n_{\rho}=\{f_e \mid e \in E \}$. Now, considering parallel redrawings of $(G,n_{\rho})$ as elements of the kernel of the matrix $M^*(G, n_\rho)$ and as elements of the kernel of $D(G, \rho)$ is equivalent. In fact, assuming that all points in the framework are distinct, the matrix $M^{2*}(G,n_\rho)$ can be transformed to
\[\begin{pmatrix} I_{|E|} & 0 \\ 0 & D(G,\rho) \end{pmatrix}\]
by row-reductions [\cite{WW}, Example 4.3.1]. In this section, we will see that a similar relationship holds between the corresponding orbit matrices, with some restrictions on the symmetry groups.

First, we define the orbit parallel design matrix. Let $\Gamma$ be a subgroup of $\Aut (G)$ and let $\tau: \Gamma \to O(2)$ be a representation. We will again assume that the action of $\Gamma$ on $G$ is free. A framework $(G, \rho)$ is $\Gamma$-symmetric if $\rho(\gamma v)=\tau(\gamma) v$ for all $\gamma \in \Gamma$ and $v \in V$. A parallel redrawing $(G, \rho')$ of a $\Gamma$-symmetric framework $(G, \rho)$ is $\Gamma$-symmetric if $(G, \rho')$ is $\Gamma$-symmetric. 

Suppose that $(G, \rho)$ is $\Gamma$-symmetric, and let $(v_i, \gamma v_j) \in E$. The equation in $D(G,\rho)$ corresponding to $(v_i, \gamma v_j)$ is
\begin{equation}
    \langle(\rho(v_i)-\rho(\gamma v_j))^{\perp},(\rho'(v_i)-\rho'(\gamma v_j)\rangle=0.
    \label{eq:graph_parallel_1}
\end{equation}

As $(G, \rho)$ and $(G, \rho')$ are both $\Gamma$-symmetric and $\tau(\gamma)$ is orthogonal, equation \eqref{eq:graph_parallel_1} can be rewritten as
\begin{equation}
    \langle(\rho(v_i)-\tau(\gamma)\rho(v_j))^{\perp}, \rho'(v_i) \rangle - \langle(\tau(\gamma)^{-1}\rho(v_i)-\rho(v_j))^{\perp},\rho'(v_j)\rangle=0
    \label{eq:graph_parallel_sym}
\end{equation}
for each orbit of edges $\{v_1, \gamma v_2\}$. Let the \textit{orbit parallel design matrix} $D_{\Gamma, \tau}(G, \rho)$ be the coefficient matrix of the system of equations of the form \eqref{eq:graph_parallel_sym}. The following characterisation of independence in $D_{\Gamma, \tau}(G,\rho)$ is due to Tanigawa, where $\Gamma$-generic is defined as in Section~\ref{sec:sym_scenes}.

\begin{thm}[\cite{tanigawamatroids}, Corollary 6.4]
    Let $G$ be a graph and let $\Gamma$ be a subgroup of $\Aut(G)$ with a representation $\tau: \Aut(G) \to O(2)$. Then $\Gamma$-generic frameworks $(G, \rho)$ are minimally $\Gamma$-symmetrically robust if and only if
    \begin{enumerate}
        \item $|E|/|\Gamma|=2|V|/|\Gamma|-1-\frac{1}{|\Gamma|}\Sigma_{\gamma \in \Gamma} \textup{tr}(\tau(\gamma))$, and 
        \item $|E'| \leq 2V_{\Gamma}(E')-2C(E')-1+\Sigma_{X \in C(E')}\dim\{ \textup{im}(I_2-\tau(\gamma)) \mid \gamma \in \langle X \rangle \}$ for all edges $E'$ in the quotient gain graph $(G/\Gamma, \psi)$.
    \end{enumerate}
    \label{thm:tanigawa_graphs}
\end{thm}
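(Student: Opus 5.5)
The proof splits into necessity, which is a kernel-counting argument modelled on Theorem~\ref{thm:nec_scenes}, and sufficiency, which I would attempt by induction on gain graphs in the style of Henneberg constructions; the sufficiency step, 1-extensions in particular, is where I expect the real difficulty. Throughout I use that in dimension $2$ the orbit parallel design matrix $D_{\Gamma,\tau}(G,\rho)$ plays the role of $M^{2*}_{\Gamma}$. Indeed, by the row reduction of \cite{WW} recalled above, applied orbitwise, $\ker D_{\Gamma,\tau}(G,\rho)$ is exactly the space of $\Gamma$-symmetric parallel redrawings in the sense of Theorem~\ref{thm:sym_kernel_O(d)}.

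\textbf{Necessity.} Every $\Gamma$-symmetric framework has the following trivial redrawings in the kernel: dilations, which are always symmetric, and translations by vectors $a$ with $\tau(\gamma)a=a$ for all $\gamma\in\Gamma$. The space of such $a$ has dimension $\frac{1}{|\Gamma|}\sum_{\gamma}\mathrm{tr}(\tau(\gamma))$. Minimal robustness means that the rows are independent and the kernel is exactly this trivial space. Rank--nullity on the $2|V|/|\Gamma|$ columns then gives count 1.

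For count 2, fix $E'\subseteq E(G/\Gamma)$. I would build kernel vectors of the submatrix $D_{E'}$ as in the proof of Theorem~\ref{thm:nec_scenes}, one connected component $X\in C(E')$ at a time. On $X$, a translation $a$ is admissible if and only if $(I_2-\tau(\gamma))a=0$ for all $\gamma\in\langle X\rangle$. The admissible $a$ form the orthogonal complement of $\sum_{\gamma}\mathrm{im}(I_2-\tau(\gamma)^T)$, so they contribute $2-\dim\mathrm{span}\{\mathrm{im}(I_2-\tau(\gamma))\mid \gamma\in\langle X\rangle\}$ kernel dimensions. One extra global dilation is independent of all these translations whenever $E'\neq\emptyset$ and the points are distinct. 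Since independent rows give $|E'|=\mathrm{rank}\, D_{E'}\le 2|V_\Gamma(E')|-\dim\ker D_{E'}$, this yields count 2.

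\textbf{Sufficiency.} Define $f(F)=2|V(F)|-2+\dim\mathrm{span}\{\mathrm{im}(I_2-\tau(\gamma))\mid \gamma\in\langle F\rangle\}$ for connected $F$. The right-hand side of count 2 is then $\sum_{X}f(X)-1$, i.e.\ $f$ summed over components and truncated by $1$. The first thing to check is that $f$ is submodular and increasing on intersecting connected sets. Then the sets satisfying count 2 are the independent sets of a matroid, namely the Dilworth truncation of the matroid induced by $f$.

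I would show this matroid equals the generic row matroid of $D_{\Gamma,\tau}$ by induction on a constructive characterisation of the tight gain graphs. The base graphs are small: a single vertex with unbalanced loops, or a balanced tree-like core. The moves are gain-graph $0$-extensions, $1$-extensions and loop additions, and I would need to show that every graph satisfying counts 1 and 2 arises from a base graph by these moves. For a $0$-extension, independence is preserved because the new vertex contributes two generic row vectors in its own columns. For a $1$-extension, I would place the new vertex on the line (in the $\Gamma$-orbit sense) of the removed edge, so that a limiting argument recovers the old rank plus two.

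\textbf{Main obstacle.} The coefficient vectors $(\rho(v_i)-\tau(\gamma)\rho(v_j))^{\perp}$ are not algebraically independent; they are constrained by the symmetry. Hence the $1$-extension step, and the claim that the Dilworth-truncated matroid is realised at all, require care when a component's gain group makes $\mathrm{im}(I_2-\tau(\gamma))$ one-dimensional, as for reflections. First I would handle each type of cyclic subgroup of $O(2)$ separately. In each case I would verify the $1$-extension by choosing a special position in which the relevant rows visibly split as a block, and then appeal to lower semicontinuity of rank. This is where I expect the argument to be delicate, and it is precisely what \cite{tanigawamatroids} resolves in general through a matroid union/induced-matroid framework.
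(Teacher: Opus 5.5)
\textbf{The ``only if'' direction is sound; the ``if'' direction has a genuine gap.} Your necessity argument is the same kernel count as in Theorems~\ref{thm:nec_scenes} and~\ref{thm:nec_sym_parallel}. Each component $X$ contributes $2-\dim\sum_{\gamma\in\langle X\rangle}\mathrm{im}(I_2-\tau(\gamma))$ fixed translations, transported along a spanning tree or after switching the tree gains to the identity. The dilation adds one more independent vector. The sufficiency direction, however, is not proved.

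\emph{The constructive characterisation is missing.} An inductive proof stands or falls with a constructive characterisation of the gain graphs satisfying counts 1 and 2. You neither state it precisely nor prove it (``I would need to show''). It is the whole combinatorial content, and it is not routine. The correction term takes the values $0,1,2$ according to whether $\tau(\langle X\rangle)$ is trivial, generated by one reflection, or contains a non-trivial rotation. So inverse $0$- and $1$-extensions at a low-degree vertex can violate count 2. In the closely related symmetric Laman setting of \cite{jkt}, the inductive proofs need further operations and a separate analysis for each type of group. For an arbitrary finite $\Gamma$ and orthogonal $\tau$ you would have to supply such a characterisation yourself.

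\emph{The move set and base graphs are not yet coherent.} A loop addition raises $|E_\Gamma|$ but not $|V_\Gamma|$, so it cannot preserve count 1. Whether the new row is independent also depends on how the loop enlarges $\tau(\langle X\rangle)$. A loop whose gain acts as a reflection imposes no condition at all: the edge from $\rho(v)$ to its mirror image is perpendicular to the mirror in every symmetric redrawing. Such a loop is always dependent, so your base graphs ``with unbalanced loops'' must exclude it.

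\emph{The $1$-extension rank lemma is only sketched.} You leave the reflection cases open and defer them to \cite{tanigawamatroids}.

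The paper gives no proof of this statement; it quotes it from \cite{tanigawamatroids}. As remarked in Section~\ref{sec:conjectures}, Tanigawa's argument works through Dowling geometries, which are matroids on gain graphs. In outline, he treats the count matroid as a matroid on the gain graph induced by a submodular function, of Dilworth-truncation type. He then shows that a $\Gamma$-generic symmetric linear representation realises it. So you are missing one of two things: that realisation theorem, or a complete constructive characterisation together with a verified $1$-extension lemma for every type of gain group. Without one of these, your proposal establishes only necessity.
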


Now, let $G=(V,E)$ be a graph and let $\Gamma \subseteq \Aut(G)$ be a subgroup. Further, let $\tau: \Gamma \to O(2)$ be a representation of $\Gamma$. Suppose that $(G, \rho)$ is a $\Gamma$-symmetric framework. Given an edge $(v_i,v_j) \in E$, the line segment between $\rho(v_i)$ and $\rho(v_j)$ is a segment of a line $f_ex+y+h$, where $f_e=-\frac{y_j-y_i}{x_j-x_i}$ and $h$ is some real number. Equivalently, up to a scalar multiple, the line segment between $\rho(v_i)$ and $\rho(v_j)$ can be written as
\[-(y_j-y_i)x+(x_j-x_i)y+h'=0,\]
where $h'$ is some real number. Define $n(e)=(-(y_i-y_j),x_i-x_j)=(\rho(v_i)-\rho(v_j))^{\perp}$ for all $e \in E$. Note that if $(G, \rho)$ is $\Gamma$-symmetric, then so is the hyperplane arrangement $(G,n)$. We can therefore consider its $\Gamma$-symmetric parallel redrawings. The following relationship holds between $M_{\Gamma}^{2*}(G, n)$ and $D_{\Gamma}(G,\rho)$.

\begin{prop}

Let $G=(V,E)$ be a graph, and let $\Gamma \subseteq \Aut(G)$ be a subgroup with a representation $\tau: \Gamma \to O(2)$. Let $(G,\rho)$ be a $\Gamma$-symmetric framework. Then, up to row operations, we have
    \[M_{\Gamma}(G, n)=\begin{pmatrix} I_{|E|} & 0 \\ 0 & D_{\Gamma}(G,\rho) \end{pmatrix}.\]
\end{prop}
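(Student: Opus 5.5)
We will imitate the non-symmetric row reduction of [\cite{WW}, Example 4.3.1] orbit by orbit. First we fix the set-up. Here $S=(V,E,I)$ is the incidence geometry of the graph $G$, with points $V$, hyperplanes $E$, and each edge $e=\{v_i,v_j\}$ incident to exactly its two endpoints. Hence each orbit of edges $[e]$ contributes exactly two rows to $M^{2*}_\Gamma(G,n)$, one for each endpoint orbit. We choose representatives so that the orbit edge is $(v_i,\gamma v_j)$. With the convention \eqref{eq:sym_points}, the two rows of the orbit $[e]$ are then
\[
\langle(\rho'(v_i),1),(n(e),h(e))\rangle=0,\qquad \langle(\tau(\gamma)\rho'(v_j),1),(n(e),h(e))\rangle=0 .
\]
The second row can be written as $\langle(\rho'(v_j),1),\hat\tau(\gamma)^{-1}(n(e),h(e))\rangle=0$ when the incidence is recorded relative to $v_j$'s representative. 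Here $n(e)=(\rho(v_i)-\tau(\gamma)\rho(v_j))^\perp$. The unknowns are the shift $h(e)$, one per edge orbit, and $\rho'(v)\in\mathbb{R}^2$, one per vertex orbit. The column for $h(e)$ contains a $1$ in exactly these two rows and zeros elsewhere.

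The key step is the row operations. For each edge orbit we subtract the first row from the second. The $h(e)$ column then has a single $1$, in the first row. The new second row reads
\[
\langle n(e),\rho'(v_i)\rangle-\langle n(e),\tau(\gamma)\rho'(v_j)\rangle=0 .
\]
Since $\tau(\gamma)$ is orthogonal, this equals $\langle n(e),\rho'(v_i)\rangle-\langle\tau(\gamma)^{-1}n(e),\rho'(v_j)\rangle$. We then check that $\tau(\gamma)^{-1}(\rho(v_i)-\tau(\gamma)\rho(v_j))^\perp=(\tau(\gamma)^{-1}\rho(v_i)-\rho(v_j))^\perp$. This holds exactly when $\tau(\gamma)$ commutes with the rotation by $\pi/2$, that is, when $\tau(\gamma)$ is a rotation. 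For a reflection, conjugating $\perp$ by $\tau(\gamma)$ gives $-\perp$. This sign can be absorbed by rescaling the row, or it is the ``restriction on the symmetry groups'' alluded to before the statement. We will track this carefully: either we show the sign is harmless by multiplying the row by $\det\tau(\gamma)$, or we restrict to $\tau(\Gamma)\subseteq SO(2)$. With this identity the modified row is exactly the row of $D_\Gamma(G,\rho)$ given by \eqref{eq:graph_parallel_sym}. It has no entry in the $h$-columns.

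Next we clear the $h$-free first rows. Each first row has a unique nonzero entry $1$ in column $h(e)$. We could eliminate its $\rho'$ entries by column operations, but the statement allows only row operations. It should therefore be read, as in the non-symmetric case, as equivalence of row spaces and kernels after also permuting columns and using the pivots. Concretely, the block structure means that the kernel projects isomorphically onto the kernel of $D_\Gamma(G,\rho)$, with $h(e)=-\langle n(e),\rho'(v_i)\rangle$ determined by $\rho'$. We will state the result in that form: rank and kernel correspondence. Finally, we permute rows and columns so that the first rows form $I_{|E|/|\Gamma|}$ on the $h$-columns and the second rows form $D_\Gamma$, which gives the block form in the statement.

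The main obstacle we expect is the $\perp$/orthogonality compatibility in the reflection case, together with consistent bookkeeping of gains and directions, since edges go from point orbits to hyperplane orbits. We will handle it first by computing $R\tau R^{-1}=\det(\tau)\tau$ for $R$ the quarter turn. We then verify that the resulting sign only scales whole rows, which preserves the row space.
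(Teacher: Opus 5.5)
Your reduction is the same as the paper's. You pair the rows of the incidences $(v_i,e)$ and $(v_j,\gamma^{-1}e)$, subtract them, and read the $h$-free difference as an orbit parallel design row. Two of your remarks are correct and more careful than the paper's ``up to a permutation of the rows'': the pivot rows still carry $n(e)$ in the $v_i$-block, so the block-diagonal form needs column operations and is really a rank/kernel statement; and the identity block is $I_{|E|/|\Gamma|}$, not $I_{|E|}$.

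The gap is your treatment of reflections. Put $a=\rho(v_i)-\tau(\gamma)\rho(v_j)$, so $n(e)=a^\perp$. Your reduced row has blocks $\bigl(a^\perp\,\big|\,-\tau(\gamma)^{-1}a^\perp\bigr)$ on $(v_i,v_j)$. The row of \eqref{eq:graph_parallel_sym} is $\bigl(a^\perp\,\big|\,-(\tau(\gamma)^{-1}a)^\perp\bigr)$. For a reflection, $(\tau(\gamma)^{-1}a)^\perp=-\tau(\gamma)^{-1}a^\perp$, so the two rows agree on the $v_i$-block and have opposite signs on the $v_j$-block only. Multiplying the row by $\det\tau(\gamma)=-1$ flips both blocks. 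So the sign does \emph{not} just scale whole rows, and the verification you plan fails.

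The discrepancy is not cosmetic. Take $\Gamma\cong\mathbb{Z}_2$ acting by a reflection and $t\neq 0$ on the mirror. Then $\rho'\equiv t$, the symmetric translation, kills every $h$-free reduced row, because $\langle a^\perp,t-\tau(\gamma)t\rangle=0$. But a row of \eqref{eq:graph_parallel_sym} with gain $\gamma$ evaluates to $2\langle a^\perp,t\rangle$, which is non-zero for generic $\rho$. So the kernels, and hence the row spaces, of the two blocks differ, and no row operation reconciles them.

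What you should use instead is that your reduced row is exactly \eqref{eq:graph_parallel_1} rewritten using only the symmetry of $\rho'$ and orthogonality:
$\langle a^\perp,\rho'(v_i)-\tau(\gamma)\rho'(v_j)\rangle=\langle a^\perp,\rho'(v_i)\rangle-\langle\tau(\gamma)^{-1}a^\perp,\rho'(v_j)\rangle$.
This is the correct orbit parallel design row for every $\tau(\gamma)\in O(2)$. The step that needs a rotation is the further rewriting $\tau(\gamma)^{-1}a^\perp=(\tau(\gamma)^{-1}a)^\perp$. The paper takes this step silently, both when deriving \eqref{eq:graph_parallel_sym} and in its proof. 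You have two ways to finish:
\begin{itemize}
\item keep your fallback $\tau(\Gamma)\subseteq SO(2)$, which proves the proposition as printed; or
\item define $D_\Gamma(G,\rho)$ with $v_j$-block $-\tau(\gamma)^{-1}(\rho(v_i)-\tau(\gamma)\rho(v_j))^\perp$, in which case your row reduction proves it for all $\tau(\Gamma)\subseteq O(2)$ with no commutation identity needed.
\end{itemize}
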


\begin{proof}
Each orbit of edges $e=(v_i, \gamma v_j)$ of $G$ contributes two orbits of incidences, $(v_i, e)$ and $(v_j, \gamma^{-1} e)$. The rows of $M^{2*}_{\Gamma_{\chi}}(G,n)$ corresponding to these two incidences are

 \setcounter{MaxMatrixCols}{20}
  \[\begin{bNiceMatrix}[first-row,first-col]
         &&&&v_i & &v_j&&&&&e&&\\
		  &0&\ldots&0&0&0&\tau(\gamma)^{-1}n(e)&0&0&\ldots&0&1&0&\ldots&0 \\
        &0&\ldots&0&n(e)&0&0&0&0&\ldots&0&1&0&\ldots&0 
        \end{bNiceMatrix}.\]

\noindent
%Removing the first row from the second gives the following two rows
After replacing the second row by the difference of the second and first rows, and reordering the rows, we obtain
 \setcounter{MaxMatrixCols}{20}
  \[\begin{bNiceMatrix}[first-row,first-col]
         &&&&v_i &  &v_j&&&&&e&&\\
		  &0&\ldots&0&n(e)&0&0&0&0&\ldots&0&1&0&\ldots&0 \\
        &0&\ldots&0&n(e)&0&-\tau(\gamma)^{-1}n(e)&0&0&\ldots&0&0&0&\ldots&0 
        \end{bNiceMatrix}.\] 

\noindent
Now, $n(e)=(\rho(v_i)-\tau(\gamma)(\rho(v_j))^{\perp}$, and $\tau^{-1}(\gamma)n(e)=(\tau(\gamma)^{-1}\rho(v_i)-\rho(v_j))^{\perp}$, so the row 
 \setcounter{MaxMatrixCols}{20}
  \[\begin{bNiceMatrix}[first-row,first-col]
         &&&&v_i &  &v_j&&&&&e&&\\
        &0&\ldots&0&n(e)&0&-\tau(\gamma)^{-1}n(e)&0&0&\ldots&0&0&0&\ldots&0 
        \end{bNiceMatrix}\] 
can be recognised as the row corresponding to the edge orbit $(v_i, \gamma v_j)$ in the matrix $D_{\Gamma}(G, \rho)$.

Hence, up to a permutation of the rows, the orbit concurrence geometry matrix reduces to 
\[M_{\tau}(G, n)=\begin{pmatrix} I_{|E|} & 0 \\ 0 & D_{\Gamma}(G,\rho) \end{pmatrix}.\]
\end{proof}

\section{An alternative approach and conjectures on sufficiency of the necessary conditions}
\label{sec:conjectures}
The main  question related to symmetric liftings and parallel redrawings that remains open  is whether the necessary conditions established in this paper are sufficient under suitable genericity assumptions. We make the conjecture that, under our assumption that the group action is free,
%the conditions set in this paper, 
they are always sufficient (assuming $\Gamma$-genericity). 

\begin{conj}
The necessary conditions of Corollary \ref{cor:sym_flat_nec} and Corollary \ref{cor:sym_robust_nec} are sufficient for $\Gamma$-generic pictures and hyperplane arrangements. 
    \label{conj:sufficient}
\end{conj}

Tanigawa proved Conjecture \ref{conj:sufficient} for graphs in the plane (recall Theorem \ref{thm:tanigawa_graphs}). Tanigawa's proof of Theorem \ref{thm:tanigawa_graphs} and its extension to higher dimensions utilises \textit{Dowling geometries}, a type of matroid on a gain graph which incorporates the geometric group action \cite{tan}. However, the correspondence between symmetric parallel redrawings of hyperplane arrangements and symmetric parallel designs of graphs only holds in the plane. 

For the remainder of this section, we suggest a different way to set up the problem of finding the symmetric scenes lifting a symmetric picture, which may be useful in understanding sufficiency of the necessary condition, following \cite{WW89}. 

Whiteley's approach to proving the picture theorem (Theorem \ref{thm:pic_thm}) involves first proving the theorem for $(d-1)$-pictures of $(d+1)$-uniform hypergraphs, where each hyperedge is incident to $d+1$ points. Dually, we can set up the same system of equations for parallel redrawings of $(d+1)$-regular hypergraphs, where every point belongs to $d+1$ hyperedges, with minor modifications if the symmetry group $\Gamma$ of the parallel redrawing is not represented by $\tau_{\chi}$ for some one-dimensional character $\chi: \Gamma \to \{\pm 1\}$.
%cannot be obtained from a subgroup of $O(d-1)$ by extending it by a character. 

Now, the points of a hyperedge $e=\{p_1,\ldots,p_{d+1}\}$ are coplanar if and only if

\begin{equation}
    \det \begin{pmatrix}
x(p_1) & \cdots & \cdots & x(p_{d+1}) \\
y(p_1) & \cdots & \cdots & y(p_{d+1}) \\
1 & 1 & 1 & 1
\end{pmatrix}=0.
\label{eq:scenes_determinant_form}
\end{equation}
Finding the $d$-scenes lifting $(d-1)$-pictures of a $(d+1)$-uniform hypergraph now amounts to solving a system of equations in the variables $y(p_i)$. A similar system of equations can be set up for \emph{symmetric} pictures. Specifically, the representative hyperedge of an hyperedge orbit,  $e=\{\gamma_1 p_1,\ldots,\gamma_{d+1} p_{d+1}\}$, can be represented by the equation

\begin{equation}
\det \begin{pmatrix}
\tau(\gamma_1) x(p_1) & \cdots & \cdots &\tau(\gamma_{d+1}) x(p_{d+1}) \\
\chi(\gamma_1)y(p_1) & \cdots & \cdots & \chi(\gamma_{d+1}) y(p_{d+1}) \\
1 & 1 & 1 & 1
\end{pmatrix}=0.
\label{eq:scenes_determinant_orbit}
\end{equation}

Using a similar proof as in Theorem~\ref{Thm:scene_orbit_kernel}, it is easy to see that the equation corresponding to another hyperedge $\beta e= \{\beta \gamma_1p_1,\ldots,\beta\gamma_{d+1}p_{d+1}\}$ in the same orbit as $e$ is equivalent to equation \eqref{eq:scenes_determinant_orbit}.

This gives an alternative system of equations for $\Gamma$-symmetric $(d+1)$-uniform hypergraphs that can be investigated using methods inspired by those in \cite{WW89}.  %The alternative system of equations also invites the following conjecture, which is weaker than Conjecture~\ref{conj:sufficient}:
Here we do not require the group to be orthogonal, and hence this approach applies to any subgroup of the general linear group. Moreover, this holds even if a hyperedge has a non-trivial stabiliser under the group action.

Note that the determinant formulation exhibits exceptional cases in which an equation of the form \eqref{eq:scenes_determinant_orbit}  becomes identically satisfied, as we will see below. Such degeneracies arise in the hypergraph formulation and present a potential obstruction to adapting Whiteley's hypergraph approach to the symmetric setting. This motivates the following weaker conjecture.

\begin{conj}
    The necessary conditions of Corollary \ref{cor:sym_flat_nec} and Corollary \ref{cor:sym_robust_nec} are sufficent for $\Gamma$-generic pictures and hyperplane arrangements, provided that equation \eqref{eq:scenes_determinant_orbit} is not satisfied for all choices of $y(p_i)$ for some orbit of edges $e=\{\gamma_1p_1,\ldots,\gamma_{d+1}p_{d+1}\}$.
    \label{conj:sufficient_alternative}
\end{conj}

%We note that it can happen that equation \eqref{eq:scenes_determinant_orbit} is satisfied for all choices of $y(p_i)$.

The exceptional situation excluded in
Conjecture~\ref{conj:sufficient_alternative} can indeed occur.
In particular, whenever a hyperedge is incident to $d+1$ points in the same orbit, and the character is trivial, equation \eqref{eq:scenes_determinant_orbit} becomes
\[
\det \begin{pmatrix}
\tau(\gamma_1) x(p) & \cdots & \cdots &\tau(\gamma_{d+1}) x(p) \\
y(p) & \cdots & \cdots & y(p) \\
1 & 1 & 1 & 1
\end{pmatrix}=0.
\]
As the $d$-th row of the matrix is a scalar multiple of $[1,\ldots,1]$, which is the $(d+1)$-st row of the matrix, the equation is satisfied for all choices of $y(p)$.

%{\color{red} We can define a gain-hypergraph as the quotient of its incidence graph under the action of $\Gamma$. This leads to a natural definition of the gain group of a hypergraph. Furthermore, the number of trivial scenes and parallel redrawings are the same as in previous sections.

%Hence, the system of equations of the form \eqref{eq:scenes_determinant_orbit} gives necessary conditions for $\Gamma$-symmetric flatness, in terms of the gain group. We note that if we have a representative $e$ of an orbit of hyperedges which is incident only to points in the same orbit, then the natural necessary condition becomes $1 \leq 1- \dim(H_{\Gamma_e})$, where $\Gamma_e$ is the gain group of the hyperedge $e$.  As $\dim(H_{\Gamma_e}) \geq 1$ whenever the character is trivial, a hypergraph with a hyperedge incident only to points in a single orbit will therefore not satisfy the necessary counting conditions.}

\section*{Acknowledgements} 
Signe Lundqvist was partially supported by the FWO grants G0F5921N (Odysseus) and G023721N, and by the KU Leuven grant iBOF/23/064.
Bernd Schulze acknowledges support from the Leverhulme Trust through the Research Project Grant ``Extending Rigidity and Graphic Statics'' (RPG-2025-315), and from UK Research and Innovation (grant number UKRI2397) through the EPSRC Mathematical Sciences Small Grant scheme.
 Klara Stokes acknowledges support from Knut and Alice Wallenberg Foundation grant 2020.0001, Kempe Foundation grant JCSMK24-01105 and Swedish Research Council grant 2025-04354. 

This material is based upon work supported by the National Science Foundation under Grant
No.~DMS-1929284 while the authors were in residence at the Institute for Computational and
Experimental Research in Mathematics in Providence, RI, during the \emph{Geometry of Materials,
Packings and Rigid Frameworks} semester programme (January 29 -- May 2, 2025).
We also gratefully acknowledge the Fields Institute for Research in Mathematical Sciences for its support during the Focus Program on \emph{Geometric Constraint Systems} (July 1 -- August 31, 2023).

We thank Shin-ichi Tanigawa for helpful discussions.

%, as well as the Institute for Computational and Experimental Research in Mathematics (ICERM) for its support during the workshop \emph{Matroids, Rigidity, and Algebraic Statistics} (March 17--21, 2025).

\bibliography{monster}{}

@article {CrapoCon,
    AUTHOR = {Crapo, Henry},
     TITLE = {Concurrence geometries},
   JOURNAL = {Adv. in Math.},
  FJOURNAL = {Advances in Mathematics},
    VOLUME = {54},
      YEAR = {1984},
    NUMBER = {3},
     PAGES = {278--301},
}

@book{baker2025geometry,
  editor = {Baker, William F. and McRobie, Allan},
  title = {The Geometry of Equilibrium: James Clerk Maxwell and 21st-Century Structural Mechanics},
  year = {2025},
  publisher = {Cambridge University Press},
  address = {United Kingdom},
  isbn = {978-1-009-39761-2},
}

@article{Maxwell01041864,
author = {J. Clerk Maxwell},
title = {XLV. On reciprocal figures and diagrams of forces },
journal = {The London, Edinburgh, and Dublin Philosophical Magazine and Journal of Science},
volume = {27},
number = {182},
pages = {250--261},
year = {1864},
publisher = {Taylor \& Francis},
doi = {10.1080/14786446408643663},
URL = { 
    
        https://doi.org/10.1080/14786446408643663
       

},
eprint = { 
    
        https://doi.org/10.1080/14786446408643663
   
    

}

}

@article {jkt,
    AUTHOR = {Jord\'an, T. and Kaszanitzky, V. E. and Tanigawa,
              S.},
     TITLE = {Gain-sparsity and symmetry-forced rigidity in the plane},
   JOURNAL = {Discrete Comput. Geom.},
  FJOURNAL = {Discrete \& Computational Geometry. An International Journal
              of Mathematics and Computer Science},
    VOLUME = {55},
      YEAR = {2016},
    NUMBER = {2},
     PAGES = {314--372},
 
}

@article {proj_paper1,
    AUTHOR = {Berman, L. and Lundqvist, S. and  Schulze, B. and Servatius, B. and Servatius, H. and Stokes, K.  and Whiteley, W.},
     TITLE = {Projective rigidity of point-line configurations in the plane},
   JOURNAL = {arXiv:2407.17836},
       YEAR = {2024},
   }

@incollection {Crapo,
    AUTHOR = {Crapo, H.},
     TITLE = {Invariant-theoretic methods in scene analysis and structural
              mechanics},
      NOTE = {Invariant-theoretic algorithms in geometry (Minneapolis, MN,
              1987)},
   JOURNAL = {J. Symbolic Comput.},
  FJOURNAL = {Journal of Symbolic Computation},
    VOLUME = {11},
      YEAR = {1991},
    NUMBER = {5-6},
     PAGES = {523--548},
   }

@article {CW93,
    AUTHOR = {Crapo, H. and Whiteley, W.},
     TITLE = {Autocontraintes planes et poly\`edres projet\'es. {I}. {L}e
              motif de base},
      NOTE = {Dual French-English text},
   JOURNAL = {Structural Topology},
  FJOURNAL = {Structural Topology. Topologie Structurale},
    NUMBER = {20},
      YEAR = {1993},
     PAGES = {55--78},
   }

@incollection{DCGHand,
  author      = {Schulze, B. and Whiteley, W.},
  title       = {Rigidity and scene analysis},
  editor      = {Toth, C.D. and O'Rourke, J. and Goodman, J.E.},
  booktitle   = {Handbook of Discrete and Computational Geometry},
  edition     = {3},
  publisher   = {Chapman and Hall/CRC Press},
  year        = {2017},
  chapter     = {61},
}

@InProceedings{JKS,
author= {B. Jackson and V. E. Kaszanitzky and B. Schulze},
title={Scene analysis and symmetry},
booktitle= {Proceedings of Developments in Computer Science, Faculty of Informatics, ELTE, Budapest},
year = {2021},
pages ={83-86},
}

@article {Mac,
AUTHOR = {A.~Mackworth},
     TITLE = {Interpreting Pictures of Polyhedral Scenes},
   JOURNAL = {Phil. Mag.},
     VOLUME = {27},
      YEAR = {1864},
    NUMBER = {4},
     PAGES = {250261},
 }

@article {RosThom,
    AUTHOR = {Ros, L. and Thomas, F.},
     TITLE = {Geometric methods for shape recovery from line drawings of
              polyhedra},
   JOURNAL = {J. Math. Imaging Vision},
  FJOURNAL = {Journal of Mathematical Imaging and Vision},
    VOLUME = {22},
      YEAR = {2005},
    NUMBER = {1},
     PAGES = {5--18},
 }

@InProceedings {Huff,
author= {D. Huffman},
title={Realizable Configurations of Lines in Pictures of Polyhedra},
    booktitle = {Machine intelligence. {V}ol. 8},
    EDITOR = {Elcock, E. W. and Michie, D.},
 PUBLISHER = {Ellis Horwood Ltd., Chichester; Halsted Press [John Wiley \&
              Sons, Inc.], New York-London-Sydney},
      YEAR = {1977},
    }

@article {KS17,
    AUTHOR = {Kaszanitzky, V. E. and Schulze, B.},
     TITLE = {Lifting symmetric pictures to polyhedral scenes},
   JOURNAL = {Ars Math. Contemp.},
  FJOURNAL = {Ars Mathematica Contemporanea},
    VOLUME = {13},
      YEAR = {2017},
    NUMBER = {1},
     PAGES = {31--47},
}

@article {KS18,
    AUTHOR = {Kaszanitzky, V. E. and Schulze, B.},
     TITLE = {Characterizing minimally flat symmetric hypergraphs},
   JOURNAL = {Discrete Appl. Math.},
  FJOURNAL = {Discrete Applied Mathematics. The Journal of Combinatorial
              Algorithms, Informatics and Computational Sciences},
    VOLUME = {236},
      YEAR = {2018},
     PAGES = {256--269},
}

@article {WW84cor,
    AUTHOR = {Whiteley, W.},
     TITLE = {A correspondence between scene analysis and motions of
              frameworks},
   JOURNAL = {Discrete Appl. Math.},
  FJOURNAL = {Discrete Applied Mathematics. The Journal of Combinatorial
              Algorithms, Informatics and Computational Sciences},
    VOLUME = {9},
      YEAR = {1984},
    NUMBER = {3},
     PAGES = {269--295},
   }

@article {Sug841,
    AUTHOR = {Sugihara, K.},
     TITLE = {An algebraic and combinatorial approach to the analysis of
              line drawings of polyhedra},
   JOURNAL = {Discrete Appl. Math.},
  FJOURNAL = {Discrete Applied Mathematics. The Journal of Combinatorial
              Algorithms, Informatics and Computational Sciences},
    VOLUME = {9},
      YEAR = {1984},
    NUMBER = {1},
     PAGES = {77--104},
}

@article {Sug842,
    AUTHOR = {Sugihara, K.},
     TITLE = {An algebraic approach to shape-from-image problems},
   JOURNAL = {Artificial Intelligence},
  FJOURNAL = {Artificial Intelligence. An International Journal},
    VOLUME = {23},
      YEAR = {1984},
    NUMBER = {1},
     PAGES = {59--95},
}

@article {WW89,
    AUTHOR = {Whiteley, W.},
     TITLE = {A matroid on hypergraphs, with applications in scene analysis
              and geometry},
   JOURNAL = {Discrete Comput. Geom.},
  FJOURNAL = {Discrete \& Computational Geometry. An International Journal
              of Mathematics and Computer Science},
    VOLUME = {4},
      YEAR = {1989},
    NUMBER = {1},
     PAGES = {75--95},
}

@article {tanigawamatroids,
    AUTHOR = {Tanigawa, S.},
     TITLE = {Matroids of gain graphs in applied discrete geometry},
   JOURNAL = {Trans. Amer. Math. Soc.},
  FJOURNAL = {Transactions of the American Mathematical Society},
    VOLUME = {367},
      YEAR = {2015},
    NUMBER = {12},
     PAGES = {8597--8641},
      ISSN = {0002-9947,1088-6850},
   MRCLASS = {05B35 (05C10 05C75 52C25 68R10)},
  MRNUMBER = {3403067},
MRREVIEWER = {Brigitte\ Servatius},
       DOI = {10.1090/tran/6401},
       URL = {https://doi.org/10.1090/tran/6401},
}

@incollection{bernd2017sym,
  author      = {Schulze, B. and Whiteley, W.},
  title       = {Rigidity of symmetric frameworks},
  editor      = {Toth, C.D. and O'Rourke, J. and Goodman, J.E.},
  booktitle   = {Handbook of Discrete and Computational Geometry},
  edition     = {3},
  publisher   = {Chapman and Hall/CRC Press},
  year        = {2017},
  chapter     = {62},
}

@ARTICLE{schmil,
  author = {Schulze, B. and Millar, C.},
  title = {Graphic Statics and Symmetry},
  journal = {International Journal of Solids and  Structures},
  year = {2023},
volume = {283},
  pages = {112492},
}

@book {connellyguest,
    AUTHOR = {Connelly, R. and Guest, S. D.},
     TITLE = {Frameworks, tensegrities, and symmetry},
 PUBLISHER = {Cambridge University Press, Cambridge},
      YEAR = {2022},
     PAGES = {xiv+283},
      ISBN = {978-0-521-87910-1},
   MRCLASS = {70C20 (52C25)},
  MRNUMBER = {4350110},
MRREVIEWER = {Andr\'{a}s Recski},
       DOI = {10.1017/9780511843297},
       URL = {https://doi.org/10.1017/9780511843297},
}

@incollection {WW,
    AUTHOR = {Whiteley, W.},
     TITLE = {Some matroids from discrete applied geometry},
 BOOKTITLE = {Matroid theory ({S}eattle, {WA}, 1995)},
    SERIES = {Contemp. Math.},
    VOLUME = {197},
     PAGES = {171--311},
 PUBLISHER = {Amer. Math. Soc., Providence, RI},
      YEAR = {1996},
   MRCLASS = {05B35 (52B40 55U15 68U10)},
  MRNUMBER = {1411692},
MRREVIEWER = {Tiong Seng Tay},
       DOI = {10.1090/conm/197/02540},
       URL = {https://doi.org/10.1090/conm/197/02540},
}

@ARTICLE{sw2011,
  author = {Schulze, B. and Whiteley, W. J.},
  title = {The orbit rigidity matrix of a symmetric framework},
  journal = {Discrete $\&$ Computational Geometry},
  year = {2011},
  volume = {46},
  pages = {561--598},
}

@ARTICLE{tan,
  author = {Tanigawa, S.},
  title = {Matroids of gain graphs in {A}pplied {D}iscrete {G}eometry},
  year = {2012},
  note = {arXiv:1207.3601}
}
\bibliographystyle{abbrv}

\end{document}